\newtheorem{thm}{Theorem}
\newtheorem{theorem}{Theorem}[section]
\newtheorem{corollary}[theorem]{Corollary}
\newtheorem{lemma}[theorem]{Lemma}
\newenvironment{proof}[1][Proof]%
{\par\addvspace{6pt}\noindent{\bf #1.}\hskip\labelsep\ignorespaces}%
{{\hfill $\square$}\par\addvspace{6pt}}
\def\card#1{\vert #1 \vert}
\def\gpindex#1#2{\card {#1\colon #2}}
\def\irr#1{{\rm  Irr}(#1)}
\def\cd#1{{\rm  cd}(#1)}
\def\cent#1#2{{\bf C}_{#1}(#2)}
\def\gpcen#1{{\bf Z} (#1)}
\def\ker#1{{\rm ker} (#1)}
\newcommand{\nl}[1]{{\rm nl} (#1)}
\begin{document}
%Notes- April, May, June 2008-Mark Lewis
%
%These are my results regarding generalized Camina pairs.  These
%results were originally included in "Generalizing Camina groups."
%However, once I realized that generalized Camina groups were
%isoclinic to Camina groups, I was able to obtain all the results I
%needed using the isoclinism, and I did not need these results there.

\title{The vanishing-off subgroup}

\author {
       Mark L.\ Lewis
    \\ {\it Department of Mathematical Sciences, Kent State University}
    \\ {\it Kent, Ohio 44242}
    \\ E-mail: lewis@math.kent.edu
       }
\date{June 20, 2008}

\maketitle

\begin{abstract}
In this paper, we define the vanishing-off subgroup of a nonabelian
group. We study the structure of the quotient of this subgroup and a
central series obtained from this subgroup.

MSC primary: 20C15, MSC secondary: 20D25
\end{abstract}

\section{Introduction}

Throughout this paper, $G$ is a finite group and $\irr G$ is the set
of irreducible characters of $G$.  Following Chapter 12 of
\cite{text}, we define the {\it vanishing-off subgroup} $V (\chi)$
of a character $\chi$ to be the subgroup generated by the elements
of $G$ where $\chi$ is not $0$.  Mathematically, we write $V (\chi)
= \langle g \in G \mid \chi (g) \ne 0 \rangle$.  It is not difficult
to see that $V (\chi)$ is a normal subgroup of $G$, and if $H$ is
any subgroup so that $\chi$ vanishes on $G \setminus H$, then $V
(\chi) \le H$. In other words, $V (\chi)$ is the smallest subgroup
$V \le G$ so that $\chi$ vanishes on $G \setminus V$.

When $\lambda$ is a linear character of $G$, we know that $\lambda$
never vanishes on $G$, so $V (\lambda) = G$.  With this in mind, one
should only look at $V (\chi)$ when $\chi$ is not linear.  In fact,
if $G$ is a nonabelian group, we define the {\it vanishing-off
subgroup $V (G)$ of $G$} to be the subgroup of $G$ generated by the
elements $g \in G$ where there is some nonlinear character $\chi \in
\irr G$ so that $\chi (g) \ne 0$.  We write $\nl G$ for the
nonlinear characters in $\irr G$.  With this in mind, we have
$$
V (G) = \langle g \in G \mid {\rm ~there~exists~} \chi \in \nl G
{\rm ~such~that~} \chi (g) \ne 0 \rangle.
$$
Notice that $V (G)$ will be a characteristic subgroup of $G$.  Every
character in $\nl G$ will vanish on $G \setminus V(G)$, and $V (G)$
is the smallest subgroup $V \le G$ so that every character in $\nl
G$ vanishes on $G \setminus V$.

It is often not the case that $V (G)$ is a proper subgroup of $G$.
Using the remarks in Section 12 of \cite{text}, we deduce that
$\gpindex G{V (G)}$ will divide $\chi (1)^2$ for all $\chi \in \nl
G$. It follows that all of the primes that divide $\gpindex G{V
(G)}$ will divide all the nonlinear degrees in $\cd G$.  Hence, if
$G$ has characters in $\nl G$ of coprime degrees, then $G = V (G)$.
In \cite{complete}, we along with Bianchi, Chillag, and Pacifici
showed that every nonsolvable group will have characters in $\nl G$
of coprime degree.  Therefore, $G = V (G)$ when $G$ is nonsolvable.
We will also provide examples where $G = V (G)$ when $G$ is solvable
and even nilpotent.

However, there are many examples where $V (G) < G$. The most
prominent examples are the Camina groups that have been studied in
many places including \cite{camina}, \cite{coprime}, \cite{MacD1},
and \cite{Yong}.  In fact, the condition for being a Camina group
can be stated in terms of $V (G)$.  In particular, a group $G$ will
be a Camina group if and only if $V (G) = G'$.  We should note that
this includes both the Frobenius groups with abelian Frobenius
complements and extra-special groups.  In \cite{GCG}, we generalized
the definition of a Camina group. A group $G$ will be a generalized
Camina group if and only if $V (G) = \gpcen G G'$.

In this paper, we study some of the properties of $V (G)$.  The
first easy fact that we prove is the following:

\begin{thm}
Let $G$ be a nonabelian, solvable group.  Then $G/V (G)$ is either
cyclic or an elementary abelian $p$-group for some prime $p$.
\end{thm}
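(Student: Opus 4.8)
The plan is to show first that $G'\le V(G)$, so that $\overline G:=G/V(G)$ is abelian, and then to induct on $|G|$: either I factor out a minimal normal subgroup, or, failing that, I analyse by hand the groups for which $G'$ is itself minimal normal.

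First I would prove $G'\le V(G)$. Write the regular character as $\rho=\sum_{\chi\in\irr G}\chi(1)\chi$ and separate the linear characters to get $\sum_{\chi\in\nl G}\chi(1)\chi=\rho-\sum_\lambda\lambda$, the last sum over the linear characters $\lambda$ of $G$. For $g\in G\setminus V(G)$ the left side is $0$ (every member of $\nl G$ vanishes off $V(G)$) and $\rho(g)=0$, so $\sum_\lambda\lambda(g)=0$; but $\sum_\lambda\lambda$ is the inflation of the regular character of $G/G'$, which is nonzero exactly on $G'$, so $g\notin G'$. Hence $G'\le V(G)$, $\overline G$ is abelian, and I may assume it is not cyclic.

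Next I would induct on $|G|$. If $G$ has a minimal normal subgroup $N\le V(G)$ with $G/N$ nonabelian, then every nonlinear irreducible character of $G/N$ inflates to a member of $\nl G$ and so vanishes off $V(G)$; hence $V(G/N)\le V(G)/N$, and $\overline G$ is a quotient of $(G/N)/V(G/N)$, which is cyclic or elementary abelian by induction. Otherwise $G'\le N$ for every minimal normal $N\le V(G)$, and choosing a minimal normal $N_0\le G'$ (possible since $G$ is nonabelian) gives $G'\le N_0\le G'$, so $G'=N_0$ is minimal normal, hence an elementary abelian $q$-group. In this remaining situation I would set $C=C_G(G')\ge G'$; then $G/C$ (a quotient of the abelian group $G/G'$) acts faithfully and irreducibly on $G'$, so by Schur's lemma $G'$ is one‑dimensional over some $\mathbb F_{q^d}$ with $G/C\hookrightarrow\mathbb F_{q^d}^\times$ acting by scalars, whence $G/C$ is cyclic. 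If $C<G$, that scalar action is free on $\irr{G'}\setminus\{1\}$, so each $\chi\in\nl G$ lies over a nontrivial $\lambda\in\irr{G'}$ with inertia group $C$; then $\chi$ is induced from the normal subgroup $C$, vanishes off $C$, and has $V(\chi)\le C$, so $V(G)\le C$ and $\overline G$ is a quotient of the cyclic group $G/C$. If $C=G$, then $G'\le Z(G)$, so $G$ has class $2$ and—since now every subgroup of $G'$ is normal in $G$—$G'\cong C_q$; each $\chi\in\nl G$ lies over a faithful character of $G'$, forcing $\ker\chi\le Z(G)$, $Z(\chi)=Z(G)$ and $\chi(1)^2=[G:Z(G)]$, so $\chi$ vanishes off $Z(G)$ but has no zeros there, giving $V(\chi)=Z(G)$; hence $V(G)=Z(G)$, and the commutator map yields a nondegenerate $C_q$‑valued alternating form on $G/Z(G)$, forcing exponent $q$, i.e.\ $\overline G$ elementary abelian.

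I expect the base case to be the delicate point: establishing $V(G)=Z(G)$ when $C=G$ rests on the central‑type equivalence ``$\chi(1)^2=[G:Z(\chi)]$ iff $\chi$ vanishes off $Z(\chi)$'', and establishing $V(G)\le C$ when $C<G$ rests on the freeness of the $G/C$‑action on the nontrivial characters of $G'$ (so that every nonlinear character is induced from $C$). The inductive reduction itself is routine once one observes that its failure pins $G'$ down as a minimal normal subgroup. The divisibility $[G:V(G)]\mid\chi(1)^2$ recorded in the introduction would offer a more computational alternative, but organising everything around the reduction to $G'$ minimal normal looks cleanest.
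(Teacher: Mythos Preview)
Your overall strategy is sound and, once the base case is reached, essentially rediscovers the dichotomy the paper imports from Isaacs' Lemma~12.3 (just-nonabelian solvable groups are $p$-groups or Frobenius with abelian complement). The $C=G$ branch is fine: with $G'$ the unique minimal normal subgroup and $G'\le Z(G)$, one gets $|G'|=q$, every nonlinear $\chi$ is faithful with $Z(\chi)=Z(G)$, and the usual commutator trick ($\chi(g)=\lambda([g,h])\chi(g)$ for suitable $h$) forces $\chi$ to vanish off $Z(G)$, whence $V(G)=Z(G)$ and $G/V(G)$ is elementary abelian.

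The $C<G$ branch, however, contains a genuine gap. From ``every nonlinear $\chi$ is induced from $C$'' you correctly get $V(\chi)\le C$ for each $\chi$, hence $V(G)\le C$. But that only tells you $G/C$ is a quotient of $G/V(G)$, not the reverse; your sentence ``$\overline G$ is a quotient of the cyclic group $G/C$'' has the inclusion the wrong way round. To rescue the argument you must show $C\le V(G)$ as well. This does hold in your situation, but it needs one more step: since $G'$ is the \emph{unique} minimal normal subgroup and $G'\not\le Z(G)$, one has $Z(G)=1$; writing $G=C\rtimes H$ with $H\cong G/C$ cyclic of $q'$-order (Schur--Zassenhaus, using that $C$ is a $q$-group of class $\le 2$), coprime action gives $C_C(H)\le Z(G)=1$, so $H$ acts fixed-point-freely on $C$, forcing $G'=[C,H]=C$. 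Thus $C=G'\le V(G)$, and now $G/V(G)=G/C$ is cyclic. Without this extra argument the deduction does not go through.

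By contrast, the paper avoids this issue entirely: it exhibits a single nonlinear $\chi$ (coming from a carefully chosen just-nonabelian quotient) with $G/V(\chi)$ already cyclic or elementary abelian, and then uses $V(\chi)\le V(G)$---the inclusion in the direction that actually helps.
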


To obtain the deeper results regarding the vanishing-off subgroup of
$G$, we look at a central series in terms of $V (G)$.  We set $V_1
(G) = V (G)$, and for $i \ge 2$, we set $V_i (G) = [V_{i-1} (G),
G]$.  We will compare this series with the lower central series for
$G$.  We set the following notation for the lower central series of
$G$. We set $G_1 = G$, and for $i \ge 2$, we set $G_i =
[G_{i-1},G]$. Note that $G_2 = G'$.  We show that $V_i (G) \le G_i$,
and we are interested in the case when $V_i (G) < G_i$ for some $i
\ge 2$. In this next theorem, we study the case when $V_2 (G) <
G_2$.  We obtain information regarding the terms of this series.  In
particular, we will show that all of the quotients $G_i/V_i (G)$
will be elementary abelian $p$-groups for some prime $p$.  In
addition, we will show that $G/V_2 (G)$ will be what we called a
VZ-group in \cite{notes} and \cite{GCG} and those are the class of
groups studied extensively in \cite{extreme}.

\begin{thm}
Let $G$ be a group.  Then $G_{i+1} \le V_i (G) \le G_i$ for all $i
\ge 1$.  If in addition $V_2 (G) < G_2$, then the following hold:

\begin{enumerate}
\item There is a prime $p$ so that $G_i/V_i (G)$ is an elementary
abelian $p$-group for all $i \ge 1$.
\item There exist positive integers $m \le n$ so that $\gpindex G{V
(G)} = p^{2n}$ and $\gpindex {G_2}{V_2 (G)} = p^m$, and $\cd {G/V_2
(G)} = \{ 1, p^n \}$.
\end{enumerate}
\end{thm}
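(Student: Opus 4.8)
The plan is to get the first statement by a short induction, reducing it to the base case $G_2\le V_1(G)$, i.e.\ to the fact that $G/V(G)$ is abelian. If $G/V(G)$ were nonabelian it would have a nonlinear irreducible character, which inflates to some $\chi\in\nl G$ with $V(G)\le\ker\chi$. Being nonlinear, $\chi$ vanishes on $G\setminus V(G)$, while $\chi$ equals $\chi(1)\neq0$ on $\ker\chi\supseteq V(G)$; hence $\ker\chi=V(G)$, so $\chi$ vanishes off $\ker\chi$. But then $[\chi,\theta]=\chi(1)\theta(1)/\gpindex G{\ker\chi}>0$ for every $\theta\in\irr G$ with $\ker\chi\le\ker\theta$, and since $\chi$ itself is such a $\theta$ while $G/\ker\chi\neq1$ has other irreducible characters, this contradicts the irreducibility of $\chi$. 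Hence $G'\le V(G)$, i.e.\ $G_2\le V_1(G)\le G_1$; and if $G_i\le V_{i-1}(G)\le G_{i-1}$, then $V_i(G)=[V_{i-1}(G),G]\le[G_{i-1},G]=G_i$ and $G_{i+1}=[G_i,G]\le[V_{i-1}(G),G]=V_i(G)$, so $G_{i+1}\le V_i(G)\le G_i$ for all $i\ge1$.

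From now on I assume $V_2(G)<G_2$ and set $\bar G=G/V_2(G)$. Since $V_2(G)=[V(G),G]$, the subgroup $V(G)/V_2(G)$ is central in $\bar G$. If $G$ were nonsolvable then $G=V(G)$ by \cite{complete}, forcing $V_2(G)=[G,G]=G_2$; so $G$ is solvable and nonabelian (because $G_2>V_2(G)$), and Theorem~1 says $G/V(G)$ is cyclic or elementary abelian $p$. It is not cyclic, for otherwise $\bar G=(V(G)/V_2(G))\langle\bar x\rangle$ for some $x$, which is abelian (a central subgroup times a single element), whence $G'\le V_2(G)$, contradicting $V_2(G)<G_2=G'$. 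So $G_1/V_1(G)=G/V(G)$ is elementary abelian $p$, of order $p^k$, say. Next, a nonlinear character of $\bar G$ inflates to a nonlinear character of $G$, which vanishes off $V(G)$; hence $V(\bar G)\le V(G)/V_2(G)\le\gpcen{\bar G}$, and since $\bar G'=G'/V_2(G)\neq1$, the nonabelian group $\bar G$ has every nonlinear irreducible character vanishing off $\gpcen{\bar G}$ --- that is, $\bar G$ is a VZ-group. Finally $\gpcen{\bar G}=V(G)/V_2(G)$: if $zV_2(G)$ were central in $\bar G$ with $z\notin V(G)$, pick $\psi\in\nl{\bar G}$, inflate it to $G$, and note that the (nonlinear) inflation vanishes at $z$ while $\psi(zV_2(G))\neq0$ because $zV_2(G)\in\gpcen{\bar G}$; a contradiction.

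The usual computation $1=[\chi,\chi]=\card{\gpcen{\bar G}}\,\chi(1)^2/\card{\bar G}$ for $\chi\in\nl{\bar G}$ now gives $\cd{\bar G}=\{1,f\}$ with $f^2=\gpindex{\bar G}{\gpcen{\bar G}}=\gpindex G{V(G)}=p^k$; thus $k=2n$ is even, $f=p^n$, and $\cd{G/V_2(G)}=\{1,p^n\}$ while $\gpindex G{V(G)}=p^{2n}$. Writing $p^m=\card{\bar G'}=\gpindex{G_2}{V_2(G)}$, we have $m\ge1$. For part~(1) I would show $G_i/V_i(G)$ is elementary abelian $p$ for all $i$ by induction on $i$, the case $i=1$ being settled above. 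Granting it for $i-1$, choose representatives $x_1,\dots,x_r$ for the cosets of $V_{i-1}(G)$ in $G_{i-1}$, so that $x_s^{p}\in V_{i-1}(G)$; then, using standard commutator identities together with $[G_i,G]=G_{i+1}\le V_i(G)$ and $[G_i,G_{i-1}]\le G_{2i-1}\le G_{i+1}\le V_i(G)$ (for $i\ge2$), one checks that modulo $V_i(G)$ the group $G_i=[G_{i-1},G]$ is generated by the commutators $[x_s,g]$ ($g\in G$), that $G_i/V_i(G)$ is abelian since $[G_i,G_i]\le G_{i+1}$, and that $[x_s,g]^{p}\equiv[x_s^{p},g]\equiv1\pmod{V_i(G)}$ because $x_s^{p}\in V_{i-1}(G)$. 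Hence $G_i/V_i(G)$ is elementary abelian $p$.

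It remains to prove $m\le n$, which I regard as the heart of the argument. As $\bar G$ has nilpotency class $2$ with $\bar G'$ and $\bar G/\gpcen{\bar G}\cong\mathbb{F}_p^{2n}$ elementary abelian $p$-groups, the commutator induces an alternating $\mathbb{F}_p$-bilinear map $c\colon(\bar G/\gpcen{\bar G})\times(\bar G/\gpcen{\bar G})\to\bar G'\cong\mathbb{F}_p^{m}$. For each nonzero $\lambda\in{\rm Hom}(\bar G',\mathbb{F}_p)$, extend $\lambda$ to a linear character of $\gpcen{\bar G}$: every irreducible character of $\bar G$ lying over it is nonlinear, hence of degree $p^n$, and since such a character has degree $\gpindex{\bar G}{R}^{1/2}$ where $R/\gpcen{\bar G}$ is the radical of $\lambda\circ c$, this radical must be trivial, i.e.\ $\lambda\circ c$ is a nondegenerate alternating form on $\mathbb{F}_p^{2n}$. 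Its Pfaffian is then a nonzero homogeneous polynomial of degree $n$ in the $m$ coordinates of $\lambda$ whose only zero is $\lambda=0$, and the Chevalley--Warning theorem forces $m\le n$. (Alternatively, this inequality is part of the structure theory of VZ-groups developed in \cite{extreme} and \cite{GCG}.) The main obstacle is precisely this last step: $m\le n$ is where the shape $\cd{\bar G}=\{1,p^n\}$ really enters, whereas everything else is the definition of the vanishing-off subgroup together with routine commutator bookkeeping.
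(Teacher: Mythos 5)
Your proof is correct, and while it shares the paper's central observation, it reaches the numerical conclusions by a noticeably more self-contained route. The paper's skeleton is: $G'\le V(G)$ via Camina elements (Lemma \ref{basics}/Lemma \ref{V(G)}), then the induction of Lemma \ref{cl3}; then Lemma \ref{VZquo} shows exactly what you show, namely that $G/V_2$ is a VZ-group with $\gpcen{G/V_2}=V_1/V_2$, but all the numerical facts (that $G/V_1$ and $G'/V_2$ are elementary abelian $p$-groups, $\gpindex G{V_1}=p^{2n}$, $\gpindex{G'}{V_2}=p^m$ with $m\le n$, and $\cd{G/V_2}=\{1,p^n\}$) are then imported wholesale from Lemma 2.4 of \cite{GCG} (Lemma \ref{main1}); finally Lemma \ref{Fact 4} handles $G_i/V_i$ for $i\ge 3$ by passing to $G/V_i$, where $V_{i-1}$ and $G_i$ become central so that $[x,w]^p=[x^p,w]$ immediately. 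You instead: prove $G'\le V(G)$ by the orthogonality argument that a nonlinear irreducible character cannot vanish off its kernel (equivalent to, but independent of, the GCP machinery); get elementary abelianness of $G/V_1$ from Theorem 1 after excluding the cyclic case and disposing of nonsolvable $G$ via \cite{complete} (the paper's route needs neither solvability nor Theorem 1 here); do the $i\ge 2$ induction by explicit commutator bookkeeping, which is a correct unpacking of the paper's centrality reduction; and, most substantially, you re-prove the inequality $m\le n$ from scratch via the commutator form on $\bar G/\gpcen{\bar G}$, the degree formula $\chi(1)^2=\gpindex{\bar G}{R}$ for the radical $R$ (which is justified since $R=\gpcen{\chi}$ and $\bar G/\gpcen{\chi}$ is abelian, Isaacs Theorem 2.31 -- a sentence worth adding), and a Pfaffian/Chevalley--Warning argument -- essentially re-deriving the VZ-group structure lemma that the paper simply cites. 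The trade-off: the paper's proof is shorter and prime-agnostic at the point where you invoke Theorem 1, while yours is self-contained, avoids the dependence on \cite{GCG}, and makes visible exactly where $\cd{G/V_2}=\{1,p^n\}$ forces $m\le n$.
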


When $V_3 (G) < G_3$, we can obtain further information regarding
the structure of the group.  The careful reader will recognize that
many of the conclusions of this result are similar to the
conclusions that we obtained for generalized Camina groups of
nilpotence class $3$ in \cite{GCG} (and many of the results there
are probably corollaries of the result here).

\begin{thm} \label{main3}
Suppose $G$ is a group where $V_3 (G) < G_3$.  Let $Z/V_3 (G) =
\gpcen {G/V_3 (G)}$ and $C/V_3 (G) = \cent {G/V_3 (G)}{G'/V_3 (G)}$.
Then the following are true:
\begin{enumerate}
\item $\gpindex G{V_1 (G)} = \gpindex {G'}{V_2 (G)}^2$.
\item $V_2 (G) = Z \cap G'$.
\item Either $\gpindex GC = \gpindex {G'}{V_2 (G)}$ or $C = V_1 (G)$.
\item $V_1 (G) \le V (C)$.
\item If $V_1 (G) < C$, then $C' = V_2 (G)$.
\item If $V_1 (G) < C$ and $[V_1 (G),C] < V_2 (G)$, then $\gpindex GC$ is a square.
\end{enumerate}
\end{thm}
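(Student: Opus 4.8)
Throughout, pass to $\bar G:=G/V_3(G)$ and exploit that $\bar G$ has nilpotence class exactly $3$: the whole theorem should fall out of two commutator pairings on $\bar G$ together with the VZ‑structure of $G/V_2(G)$. To begin, since $V_3(G)=[V_2(G),G]$, the equality $V_2(G)=G_2$ would give $V_3(G)=[G_2,G]=G_3$, against the hypothesis; so $V_2(G)<G_2$ and the preceding theorem applies. Fix the prime $p$ and the integers $m\le n$ it provides: $\gpindex G{V_1(G)}=p^{2n}$, $\gpindex{G_2}{V_2(G)}=p^m$, $\cd{G/V_2(G)}=\{1,p^n\}$, each $G_i/V_i(G)$ elementary abelian of exponent $p$, and $G_{i+1}\le V_i(G)$. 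With $i=2,3$ this gives $G_3\le V_2(G)$ and $G_4\le V_3(G)$, so in $\bar G$ we have $\bar G_3:=G_3/V_3(G)\ne1$ but $\bar G_4=1$; thus $\bar G$ has class exactly $3$. Writing $\bar V_i=V_i(G)/V_3(G)$, the identity $[V_2(G),G]=V_3(G)$ gives $\bar G_3\le\bar V_2\le\gpcen{\bar G}$. Since $G/V_1(G)$ is abelian, $G'\le V_1(G)$, and the three‑subgroups lemma gives $[V_1(G),G']\le V_3(G)$; hence $\bar G'\le\bar V_1\le\bar C$, where $\bar C:=C/V_3(G)=\cent{\bar G}{\bar G'}$. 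In particular $\bar G'$ is abelian, and since $\bar G_3=[\bar G',\bar G]\ne1$ we get $\bar C\ne\bar G$, i.e.\ $C<G$. Finally $H:=G/V_2(G)$ is a VZ‑group with $\cd H=\{1,p^n\}$, so every nonlinear $\chi\in\irr H$ vanishes off $\gpcen H$, whence $\chi(1)^2=\gpindex H{\gpcen H}$; thus $\gpindex H{\gpcen H}=p^{2n}=\gpindex G{V_1(G)}$, and since $V_1(G)/V_2(G)\le\gpcen H$ we conclude $\gpcen{G/V_2(G)}=V_1(G)/V_2(G)$. This already yields $\gpcen{\bar G}\le\bar V_1$ and $V_2(G)\le Z\cap G'$, the easy inclusion in (2).

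The two pairings are as follows. Using $\bar G_3\le\bar V_2$, $[V_1(G),G']\le V_3(G)$ and $\gpcen{G/V_2(G)}=V_1(G)/V_2(G)$, the commutator induces a surjective alternating $\mathbb F_p$‑bilinear map $\bar\gamma\colon(\bar G/\bar V_1)\times(\bar G/\bar V_1)\to\bar G'/\bar V_2$, and $\bar\gamma$ is nondegenerate. Using $\bar G_3\le\gpcen{\bar G}$ and $\bar G_4=1$, the commutator also induces a surjective $\mathbb F_p$‑bilinear map $\bar\beta\colon(\bar G'/\bar V_2)\times(\bar G/\bar C)\to\bar G_3$, nondegenerate in its second variable by the definition of $C$. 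The character hypothesis sharpens $\bar\gamma$: for nonlinear $\chi\in\irr H$ with central character $\nu$, comparing $\chi(1)^2=p^{2n}$ with $\gpindex{H/\ker{\nu}}{\gpcen{H/\ker{\nu}}}=p^{2n}/\card{{\rm rad}(\nu\circ\bar\gamma)}$ forces $\nu\circ\bar\gamma$ to be a nondegenerate alternating $\mathbb F_p$‑form on $\bar G/\bar V_1\cong\mathbb F_p^{2n}$; as this form depends only on $\nu|_{\bar G'/\bar V_2}$, \emph{every} nonzero $\mathbb F_p$‑functional on $\bar G'/\bar V_2$ composed with $\bar\gamma$ is nondegenerate.

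It then remains to extract the six conclusions. Observe that (2) is precisely the assertion that $\bar\beta$ is nondegenerate in its first variable, i.e.\ $\gpcen{\bar G}\cap\bar G'=\bar V_2$; this I treat as the main point and discuss below. Granting (2), a second three‑subgroups computation (using $\bar G'\le\bar C=\cent{\bar G}{\bar G'}$) gives $[\bar C,\bar C]\le\bar V_2$, so $\bar C/\bar V_1$ is totally isotropic for $\bar\gamma$, hence for every nondegenerate form $\nu\circ\bar\gamma$, forcing $\gpindex{\bar C}{\bar V_1}\le p^n$. Playing the nondegeneracy of $\bar\beta$ off against the family $\{\nu\circ\bar\gamma\}$ — tracking the commutators $[\bar a,\bar g]$ with $\bar a\in\bar G'$ that generate $\bar G_3$ — then yields $m=n$, which is (1) after the identification $\gpindex G{V_1(G)}=p^{2n}$, together with the dichotomy $\gpindex GC\in\{p^n,p^{2n}\}$ of (3). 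For (4)–(6) I would rerun the vanishing‑off analysis inside $C$: from $[\bar C,\bar C]\le\bar V_2$ one gets $C'\le V_2(G)$ and $V_1(G)/V_2(G)\le\gpcen{C/V_2(G)}$, so comparing the constituents of $\chi_C$ for $\chi\in\nl G$ with $V(C)$ gives $V_1(G)\le V(C)$, which is (4); if moreover $V_1(G)<C$ then $C$ is nonabelian and the VZ‑type structure of $C/V_2(G)$ forces $C'=V_2(G)$, which is (5); and if in addition $[V_1(G),C]<V_2(G)$, repeating the degree computation in $C/[V_1(G),C]$ shows $\gpindex GC$ is a square, which is (6).

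The main obstacle is (2) — equivalently, that no element of $\bar G'$ outside $\bar V_2$ is central in $\bar G$ — and the $m=n$ step of (1); both genuinely require $V_3(G)<G_3$ rather than merely $V_2(G)<G_2$, as one sees from extraspecial groups of order $p^5$, where $V_3=G_3$ yet $m<n$. I expect to prove (2) by supposing such a central element exists and deriving, through a Clifford‑theory and counting argument in $\bar G$ in the spirit of the class‑$3$ generalized Camina group analysis of \cite{GCG}, a contradiction with $\bar G_3\ne1$ and with the strong nondegeneracy of the forms $\nu\circ\bar\gamma$ established above.
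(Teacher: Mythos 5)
Your setup (the VZ structure of $G/V_2$, the nondegeneracy of every form $\nu\circ\bar\gamma$ on $\bar G/\bar V_1$, the three-subgroups computations giving $V_1\le C$ and $C'\le Z\cap G'$, and the reduction of (4)--(6) to (2)/(5)) is sound and parallels the paper's treatment of the easier parts. But the proposal does not prove the heart of the theorem: conclusion (2) ($V_2=Z\cap G'$), the equality $m=n$ in (1), and the dichotomy in (3) are exactly the points you defer (``the main obstacle,'' ``I expect to prove (2) by \dots a Clifford-theory and counting argument''), and everything else in your third paragraph is conditional on them. The phrase ``playing the nondegeneracy of $\bar\beta$ off against the family $\{\nu\circ\bar\gamma\}$ \dots then yields $m=n$'' is not an argument; indeed your own observation about extraspecial groups shows that the strong nondegeneracy of the forms $\nu\circ\bar\gamma$ is already present whenever $V_2<G_2$ and cannot by itself force $m=n$, so some genuinely new use of $V_3<G_3$ is required and is missing.

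What the paper actually does at this point is quite specific, and none of it appears in your sketch. First it reduces to the case $\gpindex{G_3}{V_3}=p$ by passing to $G/N$ for subgroups $V_3\le N<G_3$ of index $p$ in $G_3$ (legitimate because $V_i(G/N)=V_i/N$ for $i\le 2$), and recovers the general statements by intersecting over all such $N$. In that minimal case it proves a conjugacy-class lemma: if $\card{G_3}=p$ and $x\in G'\setminus\gpcen G$, then ${\rm cl}(x)=xG_3$, which bounds $\gpindex GC\le p^k$ where $p^k=\gpindex{G'}{G'\cap Z}$. It then combines this with two centralizer lemmas that use the Camina-element property of elements outside $V_1$ together with the Jacobi--Witt identity: for $a\notin C$ and $A/V_2=\cent{G/V_2}{aV_2}$ one has $\gpindex GA=p^m$ and $A\cap C=V_1$. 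The squeeze
$$
p^{2m}\le p^{2n}=\gpindex G{V_1}=\gpindex GA\,\gpindex A{V_1}=p^m\gpindex{AC}C\le p^m\gpindex GC\le p^{m+k}\le p^{2m}
$$
forces $m=n=k$, and equality $\gpindex{G'}{G'\cap Z}=\gpindex{G'}{V_2}$ yields $V_2=G'\cap Z$; conclusion (3) then follows by comparing $C$ with the preimage $D$ of $\cent{G/N}{G'/N}$ and using the same centralizer lemmas. Without the reduction to $\gpindex{G_3}{V_3}=p$, the class computation in $G'$, and the identity $A\cap C=V_1$ (or some substitute for them), your outline cannot be completed, so as it stands there is a genuine gap precisely at conclusions (1)--(3), on which (5) and (6) also depend.
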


We should note that there is some important variances in this result
from the results of \cite{GCG}.  In particular, we were able to
prove that $\gpindex GC = \gpindex {G'}{V_2 (G)}$ is a square
whenever $G$ is a generalized Camina group of nilpotence class $3$.
Here, we see that either $\gpindex GC = \gpindex {G'}{V_2 (G)}$ or
$C = V_1 (G)$.  We will show that both cases can occur.  When $V_1
(G) < C$, we need an additional assumption to obtain the conclusion
that $\gpindex GC$ is a square. This additional assumption is
actually necessary, as we have examples where $\gpindex GC$ is not a
square.

Following \cite{GCG}, it is tempting to conjecture that $\gpindex
{G_3}{V_3 (G)} \le \gpindex {G'}{V_2 (G)}$, however, this need not
be true since we have an example where $\gpindex {G_3}{V_3 (G)} =
\gpindex G{V_1 (G)} = 4$ and $\gpindex {G'}{V_2 (G)} = 2$.  In any
case, we would like to obtain some bound for $\gpindex {G_i}{V_i
(G)}$ in terms of $\gpindex G{V (G)}$ when $i \ge 3$.

One important fact regarding nilpotent, generalized Camina groups is
that their nilpotence class is at most $3$, and it is at most $2$ if
the associated prime is $2$.  With this in mind, we define the {\it
vanishing height} of $G$ to be the largest value of $i$ so that $V_i
(G) < G_i$.  It would be tempting to conjecture that $i \le 3$;
however, we have examples where $i = 4$, and it seems likely that
there is no bound on $i$.

If $G$ is nilpotent with nilpotence class $c$, we will show that
$Z_{c-1} \le V (G)$ where $Z_{c-1}$ is the $(c-1)$st term in the
upper central series for $G$.  We would like to determine if there
is a largest nilpotence class $c$ so that $V (G) = Z_{c-1}$.  We
have an example with $c = 4$.  Notice that if $G$ satisfies $V (G) =
Z_{c-1}$, then $G$ will have vanishing height $c$.

%%%%%%%%%%%%%%%%%%%%%%%%%%%%%%%

\section{Generalized Camina pairs}

Let $N$ be a normal subgroup of a group $G$.  Following the
literature, we say that $(G,N)$ is a Camina pair if every element $g
\in G \setminus N$ is conjugate to all of $gN$.  These pairs were
first studied in \cite{camina}.  In this section, we will define a
similar pair called a generalized Camina pair.  We will show that
generalized Camina pairs are useful in studying the vanishing-off
subgroup.  We begin with the following general lemma which is proved
in \cite{GCG}.  (This result was suggested by work in \cite{camina}
and \cite{ChMc}.)

\begin{lemma}\label{facts}
Let $g$ be an element of a group $G$.  Then the following are
equivalent:
\begin{enumerate}
\item The conjugacy class of $g$ is $gG'$.
\item $|\cent Gg| = \gpindex G{G'}$.
\item For every $z \in G'$, there is an element $y \in G$ so that
$[g,y] = z$.
\item $\chi (g) = 0$ for all nonlinear $\chi \in \irr G$.
\end{enumerate}
\end{lemma}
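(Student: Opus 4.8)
The plan is to prove the four-fold equivalence by a cycle of implications, mixing elementary character theory with conjugacy-class arithmetic. The natural route is $(1) \Rightarrow (2) \Rightarrow (3) \Rightarrow (1)$ for the purely group-theoretic conditions, and then to bolt condition $(4)$ onto the cycle via the column-orthogonality relations, proving, say, $(1) \Rightarrow (4) \Rightarrow (1)$ or weaving it in as $(3) \Leftrightarrow (4)$. Since this lemma is quoted as already appearing in \cite{GCG}, I would keep the argument short and self-contained.

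First I would handle $(1) \Leftrightarrow (2)$: the conjugacy class of $g$ always lies inside the coset $gG'$ (since conjugates differ by commutators), so the class equals $gG'$ precisely when its size equals $|gG'| = |G'| = |G\colon\gpcen G {G'}|$... more directly, $|gG'|=|G'|$ and the class size is $|G\colon\cent G g|$, so equality of the class with $gG'$ is equivalent to $|\cent G g| = |G\colon G'|$, i.e. condition (2). Here one uses that the class is contained in $gG'$ to upgrade "same size" to "equal". Next, $(2)$ or $(1) \Leftrightarrow (3)$: condition $(3)$ says the commutator map $y \mapsto [g,y]$ from $G$ onto $G'$ is surjective. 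The image of this map is exactly $\{ g^{-1} g^y : y \in G\}$, i.e. $g^{-1}$ times the conjugacy class of $g$; this image has size $|G\colon\cent G g|$ and is contained in $G'$, so it equals $G'$ iff the class of $g$ equals $gG'$. This simultaneously gives $(1)\Leftrightarrow(3)$.

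For the character-theoretic equivalence I would use column orthogonality at $g$ against itself: $\sum_{\chi \in \irr G} |\chi(g)|^2 = |\cent G g|$. The linear characters of $G$ are the irreducible characters of $G/G'$, and each has absolute value $1$ at $g$, so the linear part of this sum is exactly $|G\colon G'|$. Hence $\sum_{\chi \in \nl G} |\chi(g)|^2 = |\cent G g| - |G\colon G'|$, which is $\ge 0$ always, and equals $0$ exactly when $\chi(g) = 0$ for every nonlinear $\chi$. Thus condition $(4)$ holds if and only if $|\cent G g| = |G\colon G'|$, which is condition $(2)$. This closes the loop.

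The main obstacle — really the only subtle point — is making sure the containment "class of $g$ $\subseteq gG'$" is invoked at the right moment, since without it the equivalence $(1)\Leftrightarrow(2)$ would only give an inequality of sizes rather than set equality; everything else is a direct application of standard facts (class size $= |G\colon\cent G g|$, linear characters $\leftrightarrow$ $\irr{G/G'}$, and column orthogonality). I would present the proof as the single chain $(1)\Leftrightarrow(2)\Leftrightarrow(3)$ from the commutator-map observation, together with $(2)\Leftrightarrow(4)$ from orthogonality, and note that $G'$ being the relevant bound throughout is what ties the two halves together.
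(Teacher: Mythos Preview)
Your proof is correct and complete. The paper itself does not give a proof of this lemma at all; it merely states that the result is proved in \cite{GCG} (with related ideas in \cite{camina} and \cite{ChMc}), so there is no in-paper argument to compare against. Your argument---establishing $(1)\Leftrightarrow(2)\Leftrightarrow(3)$ via the containment ${\rm cl}(g)\subseteq gG'$ and the commutator map $y\mapsto[g,y]$, and then $(2)\Leftrightarrow(4)$ via column orthogonality and the count of linear characters---is exactly the standard route and is what one would expect the cited proof to look like.
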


We will say $g \in G$ is a {\it Camina element} of $G$ if $g$
satisfies one of the equivalent conditions of Lemma \ref{facts}.

\begin{lemma}\label{Camelt}
Let $N$ be a subgroup of a group $G$.  If $N$ contains a Camina
element for $G$, then $G' = [G,N]$.
\end{lemma}

\begin{proof}
Let $g \in N$ be a Camina element for $G$.  Then $G' = [g,G]$ by
Lemma \ref{facts} and $[g,G] \le [N,G] \le G'$.  The conclusion then
follows.
\end{proof}

Let $N$ be a normal subgroup of $G$.  We say that $(G,N)$ is a {\it
generalized Camina pair} (abbreviated GCP) if every element in $G
\setminus N$ is a Camina element for $G$.  (We mention here that we
allow $N = G$, although $(G,G)$ is not a very interesting GCP.)

We note that GCPs are related to an idea defined in \cite{weak}.
Following \cite{weak}, we define $(G,N,M)$ to be a {\it Camina
triple} if for every element $g \in G \setminus N$, then $g$ is
conjugate to all of $gM$. Notice that $(G,N,N)$ is a Camina triple
if and only if $(G,N)$ is a Camina pair, and $(G,N,G')$ is a Camina
triple if and only if $(G,N)$ is a GCP.

At this time, we will gather facts about generalized Camina pairs.
{}From Lemma \ref{facts}, we have a number of equivalent conditions
for a GCP.

\begin{corollary} \label{GCP1}
Let $1 < N$ be a normal subgroup of a nonabelian group $G$.  The
following are equivalent:
\begin{enumerate}
\item $(G,N)$ is a GCP.
\item $|\cent Gg| = \gpindex G{G'}$ for all $g \in G \setminus N$.
\item Every character in $\nl G$ vanishes on $G \setminus N$.
\item For every element $g \in G \setminus N$ and every element $z
\in G'$, there is an element $y \in G$ so that $[g,y] = z$.
\end{enumerate}
\end{corollary}

\begin{proof}
We first suppose that $(G,N)$ is a GCP.  Thus, for every $g \in G
\setminus N$, then ${\rm cl} (g) = gG'$.  By Lemma \ref{facts}, this
is equivalent to $\card {\cent Gg} = \gpindex G{G'}$ for every $g
\in G \setminus N$.  Also, by Lemma \ref{facts}, this is equivalent
to every nonlinear character in $\irr G$ vanishing on $G$.  Finally,
by Lemma \ref{facts}, this is equivalent to having for element $z
\in G'$ some element $y \in G$ so that $[g,y] = z$.
\end{proof}

We now prove a number of basic facts regarding GCPs.

\begin{lemma}\label{basics}
If $(G,N)$ is a GCP, then the following are true:
\begin{enumerate}
\item $G' \le N$.
\item If $N \le M < G$, then $(G,M)$ is a GCP.
\item If $K$ is a normal subgroup of $G$ satisfying $K \le N$, then
$(G/K,N/K)$ is a GCP.
\item If $G$ is nonabelian, then $\gpcen G \le N$.
\end{enumerate}
\end{lemma}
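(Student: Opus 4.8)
The plan is to prove each of the four items directly from Corollary~\ref{GCP1} and Lemma~\ref{Camelt}, since all of them are short consequences of the characterizations of a GCP. For item~(1), observe that since $N > 1$ and $G$ is nonabelian, $G \setminus N$ is nonempty (if $N = G$ the claim is trivial), so pick any $g \in G \setminus N$; by Corollary~\ref{GCP1}(4) every $z \in G'$ is a commutator $[g,y]$, hence lies in $[g,G] \le G'$, which forces $G' = [g,G]$. But $G'$ is generated by commutators $[a,b]$ with $a,b \in G$, and each such commutator — actually we need the cleaner route: $N$ contains a Camina element for $G$ only when $N \not= 1$... the direct argument is that for $g \in G \setminus N$ we have ${\rm cl}(g) = gG'$, so $gG' \subseteq gN$ would follow once we know $G' \le N$, which is circular. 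Instead: if $G' \not\le N$, then $NG'/N$ is a nontrivial normal subgroup of $G/N$; pick $g \in G'\setminus N$, then $g$ is a Camina element, so by Lemma~\ref{Camelt} (with the subgroup $\langle g \rangle^G \le$ the normal closure, which lies in $G'$) we'd still only get $G' = [\langle g\rangle^G, G]$, not a contradiction. The honest short proof of~(1): every nonlinear $\chi \in \irr G$ vanishes on $G \setminus N$ by Corollary~\ref{GCP1}(3), so $N$ contains $V(\chi)$ for every such $\chi$; but the intersection of the $\ker\chi$ over $\chi \in \irr(G/G')$... rather, recall the elementary fact that $\bigcap_{\chi \in \nl G}\ker\chi \supseteq$ nothing useful. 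The cleanest is: since every $g \in G \setminus N$ satisfies ${\rm cl}(g) = gG'$ and $1$ is not a Camina element of a nonabelian group (as some nonlinear character is nonzero at $1$), we get $1 \in N$, fine, but for $G' \le N$ use that $G/N$ has all its elements outside the identity coset being "Camina", which via Corollary~\ref{GCP1}(3) on $G$ means every nonlinear character vanishes off $N$, and since such characters have $\chi(1) = \chi(1) \ne 0$ and restrict trivially to... I would simply cite that $G' \le V(\chi) \le N$? No — that is false in general. The correct one-liner: by Corollary~\ref{GCP1}(4), for $g \in G \setminus N$, $[g,G] = G'$; now if $h \in G'$ then either $h \in N$ or $h \in G \setminus N$, and in the latter case $h$ is a Camina element, but then consider $G/G'$ — vacuous. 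So I will argue: suppose $x \in G' \setminus N$. Then $x$ is a Camina element, so ${\rm cl}(x) = xG'$. But ${\rm cl}(x) \subseteq G'$ (as $G'$ is normal), so $xG' \subseteq G'$, giving $x \in G'$ — no contradiction. The actual proof: $G/N$ is generated by the cosets $gN$, $g \notin N$; for such $g$, ${\rm cl}(g)N/N = gG'N/N$, and we want this to... Let me just defer: I expect the author proves~(1) by noting that if $G' \not\le N$ then $N$ contains a Camina element only if... — in practice the slick argument is that $(G,N)$ a GCP with $N$ proper forces, for $g\notin N$, the full coset $gG'$ to be a conjugacy class, and taking $g$ of minimal order or using that conjugacy classes inside $G \setminus N$ cannot meet $N$, one deduces $G' \subseteq N$ because $gG'$ is a class and $g \sim gz$ for all $z \in G'$, so if some $gz \in N$ then $g \in N$, contradiction; hence $gG' \subseteq G \setminus N$ for all $g \notin N$, and intersecting, $N$ is a union of cosets of $G'$, so $G' \le N$ since $1 \in N$.

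For item~(2): if $N \le M < G$, then $G \setminus M \subseteq G \setminus N$, so every $g \in G \setminus M$ is a Camina element for $G$, whence $(G,M)$ is a GCP by definition. For item~(3): let $K \trianglelefteq G$ with $K \le N$, and take $\bar g = gK \in G/K \setminus N/K$, so $g \in G \setminus N$ and $g$ is a Camina element of $G$; I would use Corollary~\ref{GCP1}(4), which gives for each $\bar z = zK \in G'/K = (G/K)'$ an element $y \in G$ with $[g,y] = z$, hence $[\bar g, \bar y] = \bar z$ in $G/K$, so $\bar g$ is a Camina element of $G/K$ by Lemma~\ref{facts}(3); since $\bar g$ was arbitrary outside $N/K$, $(G/K, N/K)$ is a GCP. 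For item~(4): assuming $G$ nonabelian, suppose $z \in \gpcen G \setminus N$; then $z$ is a Camina element, so ${\rm cl}(z) = zG'$, but ${\rm cl}(z) = \{z\}$ since $z$ is central, forcing $G' = 1$, contradicting that $G$ is nonabelian; hence $\gpcen G \le N$.

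The main obstacle is item~(1): making the "$gG'$ is a conjugacy class disjoint from $N$" argument airtight. The key point to nail down is that for $g \in G \setminus N$, since ${\rm cl}(g) = gG'$ and $g \notin N$ while $N \trianglelefteq G$, the whole class ${\rm cl}(g)$ avoids $N$ (a normal subgroup is a union of conjugacy classes, so it cannot contain part of a class); therefore $gG' \subseteq G \setminus N$, i.e.\ for every $u \in gG'$ we have $u \notin N$. Letting $g$ range over coset representatives of $G'$ lying outside $N$, this shows $G \setminus N$ is a union of $G'$-cosets; equivalently $N$ is a union of $G'$-cosets, and since $1 \in N$ (as $1$ is never a Camina element of a nonabelian group, by Lemma~\ref{facts}(4) and the existence of a nonlinear irreducible character), the coset $G' = 1 \cdot G' \subseteq N$, giving $G' \le N$. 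Once~(1) is settled this way, items~(2)--(4) are immediate from the definitions and Lemma~\ref{facts}, so I would present them in the order (1), (2), (3), (4) with (2) and (3) and (4) each occupying only a sentence or two.
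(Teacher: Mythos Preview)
Your final arguments are all correct, though the write-up meanders considerably before settling on them. For item~(1), the paper's proof is shorter than your coset-union argument and is in fact the approach you considered and then abandoned: if $g \in G' \setminus N$, then $g$ is a Camina element, so ${\rm cl}(g) = gG'$; but $g \in G'$ means $gG' = G'$ (equality, not merely containment), so $1 \in {\rm cl}(g)$ and $g$ is conjugate to $1$, forcing $g = 1$, a contradiction since $g \notin N$. You wrote ``$xG' \subseteq G'$, giving $x \in G'$ --- no contradiction'' and moved on, missing precisely this punchline. Your eventual route (normal subgroups are unions of classes, hence $N$ is a union of $G'$-cosets, and $1 \in N$ gives $G' \subseteq N$) is valid but less direct; also, your justification that $1 \in N$ via ``$1$ is never a Camina element'' is superfluous since $N$ is a subgroup by hypothesis. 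For item~(3) you argue via the commutator characterization of Lemma~\ref{facts}(3), whereas the paper uses the character-vanishing characterization of Corollary~\ref{GCP1}(3); the two are interchangeable here. Items~(2) and~(4) match the paper essentially verbatim.
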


\begin{proof}
Suppose there exists $g \in G' \setminus N$.  Then by definition of
GCP, $g$ is conjugate to all of $gG' = G'$  This implies that $g$ is
conjugate to $1$, but this is not possible since $g \not\in N$
implies $g \ne 1$.  Thus, we have $G' \le N$.  Suppose $1 \le G' \le
N \le M < G$, and so, $M$ is a normal subgroup of $G$. If $g \in G
\setminus M$, then $g \in G \setminus N$, and so, ${\rm cl} (g) =
gG'$.  We conclude that $(G,M)$ is a GCP.  Now, suppose that $K$ is
a normal subgroup of $G$ so that $K \le N$.  It follows that $N/K$
is a normal subgroup of $G/K$. Since every nonlinear irreducible
character of $G$ vanishes on $G \setminus N$, it follows that every
nonlinear character of $G/K$ will vanish on $G/K \setminus N/K$.
Thus, $(G/K, N/K)$ is a GCP.  Finally, suppose that $G$ is
nonabelian.  If $g \in \gpcen G$, then $g$ is conjugate only to
itself and not to $gG'$ since $G' > 1$. This implies that $g \in N$.
\end{proof}

We next consider abelian groups and GCPs.

\begin{lemma}
Let $G$ be a group.  Then $G$ is abelian if and only if $(G,1)$ is
GCP.
\end{lemma}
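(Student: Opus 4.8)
The plan is to prove the biconditional by unwinding the definitions in both directions. The statement is: $G$ is abelian $\iff$ $(G,1)$ is a GCP.

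First, for the forward direction, suppose $G$ is abelian. Then $G$ has no nonlinear irreducible characters, so the condition in Corollary~\ref{GCP1}(3) — that every character in $\nl G$ vanishes on $G \setminus N$ — is vacuously satisfied with $N = 1$. Strictly speaking, Corollary~\ref{GCP1} is stated for nonabelian $G$, so I would instead argue directly from the definition of GCP: every element of $G \setminus \{1\}$ must be a Camina element. Since $G$ is abelian, $G' = 1$, so for $g \ne 1$ the coset $gG' = \{g\}$ is a single element, and certainly the conjugacy class of $g$ (which is $\{g\}$) equals $gG'$. Hence every such $g$ is a Camina element by condition (1) of Lemma~\ref{facts}, and $(G,1)$ is a GCP.

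For the reverse direction, suppose $(G,1)$ is a GCP but, for contradiction, $G$ is nonabelian. Then by Lemma~\ref{basics}(4), taking $N = 1$, we get $\gpcen G \le 1$, i.e. $\gpcen G = 1$. But every finite nilpotent group — and more to the point, every finite $p$-group, and in fact every nontrivial finite group that is not centerless — has nontrivial center; more directly, a nonabelian finite group can certainly have trivial center (e.g. $\sym 3$), so this alone is not a contradiction. Here I should instead use Lemma~\ref{basics}(1): if $(G,1)$ is a GCP then $G' \le 1$, so $G' = 1$ and $G$ is abelian, contradicting our assumption. This is the cleaner route, so the reverse direction is immediate from Lemma~\ref{basics}(1).

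**The main obstacle** is simply taking care that the cited results apply in the degenerate case $N = 1$: Lemma~\ref{basics} is stated for GCPs $(G,N)$ with no restriction preventing $N = 1$ (part~(4) does assume $G$ nonabelian, which is exactly the hypothesis-for-contradiction), and the definition of GCP explicitly allows $N = G$ but one must check nothing implicitly forbids $N = 1$ — it does not, since "every element in $G \setminus N$ is a Camina element" is a perfectly sensible condition when $N = 1$. So the whole proof reduces to: ($\Rightarrow$) an abelian group has $G' = 1$ and singleton conjugacy classes, making the Camina-element condition trivial for every nonidentity element; ($\Leftarrow$) Lemma~\ref{basics}(1) gives $G' \le 1$. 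No substantive calculation is needed.
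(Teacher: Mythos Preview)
Your proof is correct and follows essentially the same approach as the paper: the reverse direction uses Lemma~\ref{basics}(1) to conclude $G' \le 1$, and the forward direction observes that when $G' = 1$ every nonidentity element is trivially a Camina element. The paper's write-up is terser (it simply asserts the forward direction), and your detour through Lemma~\ref{basics}(4) is unnecessary, but the substantive argument is identical.
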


\begin{proof}
If $(G,1)$ is a GCP, then $G' = 1$ by Lemma \ref{basics}, and so,
$G$ is abelian.  On the other hand, if $G$ is abelian, then $(G,1)$
is a GCP, and the result is proved.
\end{proof}

When $G$ is nilpotent and $(G,N)$ is a GCP with $N < G$, we see that
$G$ is essentially a $p$-group for some prime $p$.

\begin{lemma}
Suppose $(G,N)$ is a GCP where $G$ is nonabelian and nilpotent and
$N < G$. Then $G/N$ is a $p$-group for some prime $p$ and $G = P
\times Q$ where $P$ is a $p$-group and $Q$ is an abelian $p'$-group.
\end{lemma}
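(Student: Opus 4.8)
The plan is to combine two ingredients: the characterization of a GCP in terms of characters, and the decomposition of a nilpotent group into Sylow subgroups. By the definition of a GCP together with Lemma \ref{facts} (equivalently, by Corollary \ref{GCP1}), every character in $\nl G$ vanishes on $G \setminus N$, and therefore $V (G) \le N < G$. Since $G$ is nilpotent, write $G = P_1 \times \cdots \times P_k$ as the (internal) direct product of its Sylow subgroups, where $P_i$ is a $p_i$-group for distinct primes $p_1, \dots, p_k$. Recall that the irreducible characters of a direct product are exactly the products $\alpha_1 \times \cdots \times \alpha_k$ with $\alpha_i \in \irr{P_i}$, that such a character is nonlinear precisely when some $\alpha_i$ is nonlinear, and that a short computation gives $V (\alpha_1 \times \cdots \times \alpha_k) = V(\alpha_1) \times \cdots \times V(\alpha_k)$, where $V(\lambda)$ is the whole factor whenever $\lambda$ is linear (this uses only that a linear character is nowhere zero and that every $\alpha_i$ is nonzero at the identity).

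The first real step is to show that at most one $P_i$ is nonabelian. Suppose $P_1$ and $P_2$ were both nonabelian. Choose nonlinear characters $\alpha_1 \in \irr{P_1}$ and $\alpha_2 \in \irr{P_2}$, and form the nonlinear characters $\chi = \alpha_1 \times 1_{P_2} \times \cdots \times 1_{P_k}$ and $\chi' = 1_{P_1} \times \alpha_2 \times 1_{P_3} \times \cdots \times 1_{P_k}$ of $G$. By the product formula above, $V(\chi) = V(\alpha_1) \times P_2 \times \cdots \times P_k$ and $V(\chi') = P_1 \times V(\alpha_2) \times P_3 \times \cdots \times P_k$. These two subgroups together generate $G$, so $V (G) = G$, contradicting $V (G) < G$. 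Since $G$ is nonabelian, exactly one factor, say $P_1$, is nonabelian; put $P = P_1$, $p = p_1$, and $Q = P_2 \times \cdots \times P_k$, so that $G = P \times Q$ with $P$ a $p$-group and $Q$ an abelian $p'$-group.

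It remains to show $G/N$ is a $p$-group. Pick any nonlinear $\alpha \in \irr P$ and any $\beta \in \irr Q$ (all of which are linear, as $Q$ is abelian); then $\alpha \times \beta$ is a nonlinear character of $G$ with $V(\alpha \times \beta) = V(\alpha) \times Q$, so $Q \le V (G) \le N$. Since $N$ is normal in $G$ and $P \cap Q = 1$, Dedekind's law gives $N = (N \cap P) \times Q$, whence $G/N \cong P/(N \cap P)$ is a $p$-group, and it is nontrivial because $N < G$. (Alternatively: $G/N$ is a quotient of $G/Q \cong P$.)

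This argument is essentially routine; the only point that needs a little care is the computation that the vanishing-off subgroup of a product character is the product of the vanishing-off subgroups of the factors (with the convention that a linear character contributes its entire factor). Everything else is bookkeeping with direct products, so I do not expect a genuine obstacle.
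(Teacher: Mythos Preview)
Your proof is correct, but it follows a different route from the paper's.

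The paper's argument is shorter and more classical: since every $\chi \in \nl G$ vanishes on $G \setminus N$, the standard fact (Isaacs, page~200) gives $\gpindex GN \mid \chi(1)^2$ for every such $\chi$. Thus any prime $p$ dividing $\gpindex GN$ divides every nonlinear character degree; writing $G = P \times Q$ with $P$ the Sylow $p$-subgroup, the Hall $p$-complement $Q$ can then have no nonlinear irreducible characters and is abelian. (That $G/N$ is a $p$-group is then implicit: the nonlinear degrees are now $p$-powers, so $\gpindex GN$ is as well.)

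Your argument instead works directly with the decomposition $V(\alpha_1 \times \cdots \times \alpha_k) = V(\alpha_1) \times \cdots \times V(\alpha_k)$ and the containment $V(\chi) \le V(G) \le N$, bypassing the divisibility fact entirely. This is more elementary and self-contained (no appeal to \cite{text}), and it makes the role of $V(G)$ explicit, at the cost of a little extra bookkeeping with direct products. Both approaches ultimately hinge on the same phenomenon---that two nonabelian direct factors produce nonlinear characters of coprime degree, equivalently nonlinear characters whose vanishing-off subgroups together cover $G$---but the paper reaches it via degrees while you reach it via supports.
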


\begin{proof}
Let $p$ be a prime divisor of $\gpindex GN$, and consider $\chi \in
\nl G$.  By Corollary \ref{GCP1}, we know that $\chi$ vanishes on $G
\setminus N$.  This implies that $\gpindex GN$ divides $\chi (1)^2$
(see page 200 in \cite{text}).  Hence, $p$ divides every nonlinear
degree in $\cd G$.  If we take $G = P \times Q$ where $P$ is the
Sylow $p$-subgroup of $G$ and $Q$ is the Hall $p$-complement, then
every character in $\irr Q$ is linear. Therefore, $Q$ is abelian.
\end{proof}

\section{The subgroup $V(G)$}

In this section, we study some of the basic properties of the
characteristic subgroup $V(G)$.  We begin with a result comparing
two GCPs.

\begin{lemma} \label{intersection}
If $(G,N_1)$ and $(G,N_2)$ are GCPs, then $(G,N_1 \cap N_2)$ is a
GCP.
\end{lemma}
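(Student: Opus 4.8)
The plan is to use the characterization of GCPs from Corollary \ref{GCP1}, specifically the equivalence with condition (3): $(G,N)$ is a GCP if and only if every character in $\nl G$ vanishes on $G \setminus N$. This reduces the claim to a purely set-theoretic observation about where nonlinear characters vanish. First I would note that both $N_1$ and $N_2$ are normal in $G$ (by definition of GCP, or one can invoke Lemma \ref{basics}(1) to see each contains $G'$, though normality is part of the setup), so $N_1 \cap N_2$ is a normal subgroup of $G$ as well, and hence the phrase ``$(G, N_1 \cap N_2)$ is a GCP'' makes sense. I should also dispose of the degenerate case: if $N_1 \cap N_2 = 1$, then by the earlier lemma $(G,1)$ being a GCP forces $G$ abelian, in which case $\nl G = \emptyset$ and the statement is vacuous (or one simply allows this as a trivial GCP); it may be cleanest to assume $G$ nonabelian so that Corollary \ref{GCP1} applies directly, and handle $G$ abelian separately in one line.

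The key step is the following: take any $\chi \in \nl G$ and any element $g \in G \setminus (N_1 \cap N_2)$. Then $g \notin N_1$ or $g \notin N_2$. In the first case, since $(G,N_1)$ is a GCP, Corollary \ref{GCP1}(3) gives $\chi(g) = 0$; in the second case, since $(G,N_2)$ is a GCP, the same corollary gives $\chi(g) = 0$. Either way $\chi(g) = 0$. Since $\chi \in \nl G$ and $g \in G \setminus (N_1 \cap N_2)$ were arbitrary, every nonlinear irreducible character of $G$ vanishes on $G \setminus (N_1 \cap N_2)$. Applying the reverse direction of Corollary \ref{GCP1} (condition (3) implies condition (1)) then yields that $(G, N_1 \cap N_2)$ is a GCP, completing the argument.

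Honestly, there is no serious obstacle here — the lemma is essentially immediate once one has Corollary \ref{GCP1} in hand, since ``vanishing off $N$'' is monotone in a way that cooperates perfectly with intersection: the complement of $N_1 \cap N_2$ is the union of the complements, and a character that vanishes on each complement vanishes on their union. The only thing to be careful about is making sure the hypotheses of Corollary \ref{GCP1} are met, i.e.\ that we are in the nonabelian case with $1 < N_i$; if one wants the lemma stated in full generality one should add a sentence covering $G$ abelian (where everything is trivially a GCP over any normal subgroup by the relevant earlier lemma) and the case $N_1 \cap N_2 = 1$. Alternatively, the whole proof can be phrased using the centralizer condition (2) of Corollary \ref{GCP1} instead, but the character-vanishing formulation is the most transparent.
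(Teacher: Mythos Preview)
Your proof is correct and follows essentially the same route as the paper: both observe that $G \setminus (N_1 \cap N_2) = (G \setminus N_1) \cup (G \setminus N_2)$ and then apply one of the equivalent GCP conditions to each piece. The only cosmetic difference is that the paper invokes the conjugacy-class definition (${\rm cl}(g) = gG'$) directly rather than routing through the character-vanishing condition of Corollary~\ref{GCP1}, and it does not fuss over the abelian edge case.
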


\begin{proof}
We know that $1 \le G' \le N_1 \cap N_2$.  If $g \in G \setminus
(N_1 \cap N_2)$, then either $g \in G \setminus N_1$ or $g \in G
\setminus N_2$.  In either case, we have ${\rm cl} (g) = gG'$.  We
conclude that $(G,N_1 \cap N_2)$ is a GCP.
\end{proof}

Let $G$ be a group, and let ${\cal N} (G) = \{ N \le G \mid (G,N)
~{\rm is~a~GCP} \}$.  We can now enumerate some of the properties of
$V (G)$.

\begin{lemma} \label{V(G)}
Let $G$ be a nonabelian group.  Then the following are true:

\begin{enumerate}
\item $(G, V (G))$ is a GCP.
\item $G' \le V (G)$.
\item $\gpcen G \le V(G)$.
\item Every element of $G \setminus V(G)$ is a Camina element for
$G$.
%\item Every nonlinear character in $\irr G$ vanishes on $G \setminus
%V(G)$.  (This says that if $\chi \in \irr G$ is nonlinear, then $V
%(\chi) \le V (G)$.  Hence, we say that $V (G)$ is the {\it
%vanishing-off subgroup} for $G$.)
\item If $(G,N)$ is a GCP, then $V (G) \le N$.
%(In particular, if
%every nonlinear character in $\irr G$ vanishes on $G \setminus N$,
%then $V (G) \le N$ or if every element of $G \setminus N$ is a
%Camina element, then $V (G) \le N$.)
\item $V (G) = \cap_{N \in {\cal N} (G)} N$.
\item $V (G) = \prod_{\chi \in \nl G} V (\chi)$.
\item $V (G)$ is a characteristic subgroup of $G$.
\item If $V (G) < M$, then $[G,M] = G'$.
\end{enumerate}
\end{lemma}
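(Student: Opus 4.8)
The plan is to establish the nine items roughly in the order listed, since each builds naturally on its predecessors and on the lemmas already in hand. For item~1, I would argue directly from the definition of $V(G)$: every character in $\nl G$ vanishes on $G \setminus V(G)$ (this is essentially the defining property of $V(G)$, noted in the introduction), so by Corollary~\ref{GCP1}(3) the pair $(G,V(G))$ is a GCP. Items~2 and~3 are then immediate from Lemma~\ref{basics}(1) and~(4) applied to this GCP, provided $V(G) < G$; and if $V(G) = G$ there is nothing to prove since $G' \le G$ and $\gpcen G \le G$ trivially. Item~4 restates item~1 via the definition of Camina element together with Corollary~\ref{GCP1} (the equivalence of being a GCP with every element outside being a Camina element), so this is essentially a matter of unwinding definitions.

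Next, for item~5, suppose $(G,N)$ is a GCP. Every $\chi \in \nl G$ vanishes on $G \setminus N$ by Corollary~\ref{GCP1}(3); hence each generator $g$ of $V(G)$ — an element where \emph{some} nonlinear $\chi$ is nonzero — must lie in $N$, so $V(G) \le N$. This gives $V(G) \le \bigcap_{N \in \NN(G)} N$. Combined with item~1 (which says $V(G)$ itself is one of the $N$'s appearing in the intersection), we get item~6: $V(G) = \bigcap_{N \in \NN(G)} N$. Note that the intersection is nonempty precisely because $(G,V(G))$ is a GCP, and it is closed under intersection by Lemma~\ref{intersection}, so this also shows $V(G)$ is the unique smallest member of $\NN(G)$.

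For item~7, I would show both containments. The product $\prod_{\chi \in \nl G} V(\chi)$ contains every generator of $V(G)$: if $\chi(g) \ne 0$ for some $\chi \in \nl G$, then $g \in V(\chi)$ by definition of the vanishing-off subgroup of a character. Conversely, each $V(\chi)$ for $\chi \in \nl G$ is generated by elements $g$ with $\chi(g) \ne 0$, and such $g$ are among the generators of $V(G)$, so $V(\chi) \le V(G)$ for every nonlinear $\chi$, whence the product lies in $V(G)$. Item~8 is just the observation already made in the introduction: the generating set $\{g \mid \exists \chi \in \nl G, \chi(g) \ne 0\}$ is permuted by $\aut G$, since automorphisms permute $\nl G$ and commute with evaluation, so $V(G)$ is characteristic. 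Finally, item~9: if $V(G) < M$, pick $g \in M \setminus V(G)$; by item~4, $g$ is a Camina element, so $G' = [g,G]$ by Lemma~\ref{facts}(3). Since $g \in M$ and (one should check $M$ is normal — it will be, as $V(G) \le M$ forces $M$ normal when $M$ is the relevant subgroup, or one applies Lemma~\ref{Camelt} directly) we get $G' = [g,G] \le [M,G] \le G'$, forcing $[G,M] = G'$. The only genuine subtlety is item~9's hypothesis on $M$: I expect the intended reading is that $M$ is a normal subgroup of $G$ with $V(G) < M$, and then Lemma~\ref{Camelt} applies verbatim since $M$ contains a Camina element. That minor normality bookkeeping is the main thing to pin down; everything else is a direct consequence of Corollary~\ref{GCP1}, Lemma~\ref{basics}, Lemma~\ref{intersection}, and the definitions.
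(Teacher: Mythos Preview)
Your proposal is correct and follows essentially the same route as the paper: item~1 via Corollary~\ref{GCP1}, items~2--4 via Lemma~\ref{basics}, items~5--6 from the minimality of $V(G)$ in $\NN(G)$, item~7 by double containment, item~8 via the action of $\aut G$, and item~9 via Lemma~\ref{Camelt}. Your worry about normality of $M$ in item~9 is unnecessary, since Lemma~\ref{Camelt} as stated only requires $M$ to be a subgroup containing a Camina element.
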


\begin{proof}
Let $(G,N)$ be a GCP.  By Corollary \ref{GCP1}, we know that every
character in $\nl G$ vanishes on $G \setminus N$ and so, $V (G) \le
N$.  It follows that $V (G) \le \cap_{N \in {N \in {\cal N} (G)}}
N$.  The fact that $(G, V (G))$ is a GCP is an application of
Corollary \ref{GCP1}. Thus, $V (G) \in {\cal N} (G)$, and we
conclude that $V (G) = \cap_{N \in {\cal N} (G)} N$.  The facts that
$G' \le V (G)$, $\gpcen G \le V (G)$, every element of $G \setminus
V(G)$ is a Camina element, and every nonlinear character in $\irr G$
vanishes on $G \setminus V (G)$ are consequences of Lemma
\ref{basics}. Notice that $V (\chi) \le V (G)$ for all $\chi \in \nl
G$.  Thus, $\prod_{\chi \in \nl G} V (\chi) \le V (G)$.  On the
other hand, every character in $\nl G$ will vanish on $G \setminus
\prod_{\chi \in \nl G} V (\chi)$.  This implies $V (G) \le
\prod_{\chi \in \nl G} V (\chi)$, and hence, we get $V (G) =
\prod_{\chi \in \nl G} V (\chi)$. Any automorphism of $G$ will just
permute the elements of ${\cal N} (G)$, so $V (G)$ will be
characteristic. Finally, if $V (G) < M$, then $M$ must contain a
Camina element of $G$, and so $[G,M] = G'$ by Lemma \ref{Camelt}.
\end{proof}

We now describe relationship between $V (G)$ and nonabelian
quotients of $G$.

\begin{lemma}\label{quots}
Let $N$ be a normal subgroup of $G$ so that $G/N$ is nonabelian.
Then $N \le V(G)$ and $V(G/N) \le V (G)/N$.
\end{lemma}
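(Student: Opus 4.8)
The plan is to use the standard correspondence between $\irr{G/N}$ and the characters in $\irr G$ having $N$ in their kernel—under which degrees, and hence nonlinearity, are preserved—together with the GCP machinery already established in Lemma~\ref{basics} and Lemma~\ref{V(G)}.

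First I would prove $N \le V(G)$. Since $G/N$ is nonabelian, there is some $\bar\chi \in \nl{G/N}$. Inflating $\bar\chi$ to $G$ produces a character $\chi \in \irr G$ with $N \le \ker\chi$ and $\chi(1) = \bar\chi(1) > 1$, so $\chi \in \nl G$. For every $n \in N$ we have $\chi(n) = \chi(1) \ne 0$, so each element of $N$ lies in the generating set for $V(G)$; hence $N \le V(G)$.

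With $N$ now a normal subgroup of $G$ contained in $V(G)$, I would apply Lemma~\ref{basics}(3) to the GCP $(G, V(G))$ (which is a GCP by Lemma~\ref{V(G)}(1)), taking $K = N$, to conclude that $(G/N, V(G)/N)$ is a GCP. Since $G/N$ is nonabelian, Lemma~\ref{V(G)}(5) applied inside the group $G/N$ to this GCP yields $V(G/N) \le V(G)/N$, which is the remaining assertion. (Alternatively, avoiding the GCP results entirely: $V(G/N)$ is generated by those cosets $gN$ for which some $\bar\chi \in \nl{G/N}$ has $\bar\chi(gN) \ne 0$; inflating such $\bar\chi$ to $\chi \in \nl G$ gives $\chi(g) = \bar\chi(gN) \ne 0$, so $g \in V(G)$ and thus $gN \in V(G)/N$.)

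I do not expect a genuine obstacle here; the argument is short. The only point that needs a moment of care is the verification that inflation carries nonlinear irreducible characters of $G/N$ to nonlinear irreducible characters of $G$, and this is immediate because inflation preserves both irreducibility and character degree.
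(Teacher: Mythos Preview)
Your proof is correct and follows essentially the same approach as the paper: pick $\chi \in \nl{G/N}$ to show $N \le \ker\chi \le V(G)$, then use that $(G,V(G))$ is a GCP together with Lemma~\ref{basics}(3) and Lemma~\ref{V(G)} to conclude $V(G/N) \le V(G)/N$. The only cosmetic difference is that the paper writes the first step as the chain $N \le \ker\chi \le V(\chi) \le V(G)$ rather than invoking the generating set of $V(G)$ directly.
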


\begin{proof}
We can find a character $\chi \in \nl {G/N}$.  Now, observe that $N
\le \ker {\chi} \le V (\chi) \le V (G)$. By Lemma \ref{V(G)}, we
know that $(G,V (G))$ is a GCP, and thus, by Lemma \ref{basics},
$(G/N,V(G)/N)$ is a GCP. Applying Lemma \ref{V(G)}, we deduce that
$V (G/N) \le V(G)/N$.
\end{proof}

We describe the structure of $G/ V(G)$ when $G$ is nonabelian. When
$G$ has a quotient that is nonabelian and nilpotent, $G/V(G)$ will
be an elementary abelian $p$-group for some prime $p$.

\begin{lemma}
Let $G$ be a group with a nonabelian nilpotent quotient.  Then $G/
V(G)$ is an elementary abelian $p$-group for some prime $p$.
\end{lemma}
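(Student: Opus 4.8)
The plan is to reduce to the case where $G$ itself is nonabelian and nilpotent, and then show that in that situation $G/V(G)$ is elementary abelian for the prime dividing $|G:V(G)|$. First I would let $N$ be a normal subgroup of $G$ with $G/N$ nonabelian and nilpotent; such an $N$ exists by hypothesis (take $N$ to be the kernel of a suitable homomorphism onto a nonabelian nilpotent quotient, or just note the hypothesis gives one directly). By Lemma~\ref{quots} we have $N \le V(G)$ and $V(G/N) \le V(G)/N$, so $G/V(G)$ is a quotient of $(G/N)\big/V(G/N)$. Hence it suffices to prove the statement when $G$ is nonabelian and nilpotent, i.e.\ to show $G/V(G)$ is elementary abelian $p$ in that case.

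So assume $G$ is nonabelian nilpotent. Since $G' \le V(G)$ by Lemma~\ref{V(G)}, the quotient $G/V(G)$ is abelian; it remains to pin down that it has prime exponent. The key point is that $(G, V(G))$ is a GCP by Lemma~\ref{V(G)}(1). If $V(G) = G$ there is nothing to prove, so assume $V(G) < G$. Then $(G,V(G))$ is a nontrivial GCP with $G$ nonabelian nilpotent, and by the earlier lemma on nilpotent groups admitting a GCP, $G/V(G)$ is a $p$-group for some prime $p$ (and in fact $G = P \times Q$ with $Q$ abelian of order coprime to $p$). Now I would argue that this $p$-group $G/V(G)$ is actually elementary abelian: suppose not, so there is an element $xV(G)$ of order $p^2$ in $G/V(G)$; set $M/V(G) = \langle x^p V(G)\rangle$, so $V(G) < M$. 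By Lemma~\ref{V(G)}(9), $[G,M] = G'$. But $M \le \langle x, V(G)\rangle$ and, working modulo $V(G)$, the element $xV(G)$ centralizes $x^pV(G)$ since $G/V(G)$ is abelian; more carefully, $[G,M] \le V(G)$ would follow from $M/V(G)$ being central in $G/V(G)$, which it is (abelian quotient), giving $G' = [G,M] \le V(G)$ — fine, that alone is not a contradiction. Instead the contradiction must come from comparing orders: since $(G,V(G))$ is a GCP and $(G,M)$ is then also a GCP by Lemma~\ref{basics}(2), while $M > V(G)$ contradicts the minimality of $V(G)$ among members of $\NN(G)$ (Lemma~\ref{V(G)}(6)) only if $M \in \NN(G)$ is strictly larger, which it is — but $V(G)$ is the intersection, not forced to be a minimum. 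So the right route is: because $(G,M)$ is a GCP with $V(G) < M$, Lemma~\ref{V(G)}(9) gives nothing new; rather I use that in a GCP $(G,M)$ every element of $G\setminus M$ is a Camina element, and then invoke Corollary~\ref{GCP1}(2): $|\cent G g| = |G:G'|$ for all $g \notin M$. Picking $g \notin M$ with $g^p \in M$ forces $\langle g \rangle V(G)/V(G)$ to have order $> p$, and one derives that $\cent G g$ cannot have the required index because $g^p$ is a noncentral-modulo-$V(G)$ element that must also centralize things it cannot — this is the delicate part.

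The main obstacle, therefore, is the final step: showing a GCP $(G,V(G))$ with $G$ nonabelian nilpotent forces $G/V(G)$ to have exponent $p$, not merely to be a $p$-group. I expect the cleanest argument is: if $V(G) < M \le G$ with $M/V(G)$ of order $p$, then $(G,M)$ is a GCP, so by Corollary~\ref{GCP1}(4), for every $g \in G\setminus M$ and every $z \in G'$ there is $y$ with $[g,y]=z$; since $G'\le V(G)$, this shows the map $y \mapsto [g,y]$ surjects $G \to G'$, whose kernel is $\cent G g$, so $|\cent G g| = |G:G'| = |G:V(G)|\cdot|V(G):G'|$. Running this for $g$ and for $gv$ with $v \in V(G)\setminus$ (something) and comparing centralizers, together with the fact that $\langle g, V(G)\rangle/V(G)$ would have order $p^2$ if $gV(G)$ had order $p^2$, yields that $gV(G)$ must have order exactly $p$. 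This will complete the proof that $G/V(G)$ is elementary abelian $p$.
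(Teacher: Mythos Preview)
Your reduction step is fine and matches the paper in spirit: passing to a nonabelian nilpotent quotient via Lemma~\ref{quots} (or, as the paper does, to a maximal nonabelian quotient $G/K$) correctly reduces the problem, and you correctly deduce that $G/V(G)$ is an abelian $p$-group.

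The genuine gap is the exponent-$p$ step. Your centralizer-comparison idea does not close: knowing $|\cent Gg|=|G:G'|$ for every $g\notin V(G)$ tells you that if $gV(G)$ had order $p^2$ then $\cent Gg=\cent G{g^p}$ (since both have the same order and one contains the other), but that equality of centralizers is no contradiction. The various attempts you sketch (via Lemma~\ref{V(G)}(9), via minimality of $V(G)$, via ``running this for $g$ and $gv$'') either use facts already known ($G'\le V(G)$) or appeal to a comparison that is never made precise. You yourself flag the step as ``delicate'' and leave it unfinished; as written, it is not a proof.

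The paper sidesteps this difficulty entirely by working with a single character rather than with $V(G)$. It chooses $K$ maximal with $G/K$ nonabelian; since $G$ has a nonabelian nilpotent quotient, Lemma~12.3 of \cite{text} gives that $G/K$ is a $p$-group (with cyclic center and a unique minimal normal subgroup). For any $\chi\in\nl{G/K}$ one has $V(\chi)/K=\gpcen{G/K}$, and Lemma~12.3(a) yields that $(G/K)/\gpcen{G/K}\cong G/V(\chi)$ is elementary abelian of exponent $p$. Since $V(\chi)\le V(G)$, the quotient $G/V(G)$ is elementary abelian as well. If you want to salvage your line of argument, the cleanest fix is exactly this: produce one nonlinear $\chi$ with $G/V(\chi)$ elementary abelian, rather than trying to force exponent $p$ on $G/V(G)$ directly from GCP/centralizer data.
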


\begin{proof}
Let $K$ be maximal among normal subgroups of $G$ that have a
nonabelian solvable quotient.  Since $G$ has a nonabelian nilpotent
quotient, we may assume that $G/K$ is nilpotent.  We know by Lemma
12.3 of \cite{text} that $G/K$ is a $p$-group for some prime $p$.
Consider a character $\chi \in \nl {G/K}$.  It is not difficult to
see that $V (\chi)/K = \gpcen {G/K}$, and so, $G/ V(\chi)$ is an
elementary abelian $p$-group by Lemma 12.3 (a) of \cite{text}.  By
Lemma \ref{V(G)}, we have $V (\chi) \le V(G)$, and so, $G/V(G)$ is a
quotient of $G/V(\chi)$.
\end{proof}

When $G$ has a quotient that is a Frobenius group with an abelian
Frobenius complement, we show that $G/V(G)$ is cyclic.

\begin{lemma}
Suppose $G$ has a quotient that is a Frobenius group with an abelian
Frobenius complement. Then $G/V (G)$ is cyclic.
\end{lemma}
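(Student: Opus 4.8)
The plan is to reduce to the case where $G$ itself is a Frobenius group with an abelian complement and then exploit the structure of such groups together with the previously established properties of $V(G)$. So first I would let $N \trianglelefteq G$ be such that $G/N$ is a Frobenius group with abelian Frobenius complement. Since a Frobenius group is nonabelian, Lemma~\ref{quots} gives $N \le V(G)$ and $V(G/N) \le V(G)/N$; hence if I can show $V(G/N)$ has cyclic index in $G/N$, then $G/V(G)$, being a quotient of $(G/N)/V(G/N)$, is cyclic as well. Thus it suffices to prove the lemma under the assumption that $G$ is a Frobenius group with abelian complement $H$ and kernel $K$.

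Next I would identify $V(G)$ in this Frobenius situation. The key point is that $G \setminus K$ consists precisely of the elements lying in conjugates of $H \setminus \{1\}$, and for a Frobenius group the irreducible characters split into those with $K$ in the kernel (which are inflated from the abelian complement $G/K \cong H$, hence linear) and those induced from nonprincipal characters of $K$ (which vanish off $K$). Therefore every nonlinear irreducible character of $G$ vanishes on $G \setminus K$, so $(G,K)$ is a GCP and by Lemma~\ref{V(G)}(5) we get $V(G) \le K$. Combined with $G' \le V(G)$ from Lemma~\ref{V(G)}(2) and the fact that $G/G' \cong (K/[K,H]) \times H$ is abelian with $G/K \cong H$, I would conclude $G/V(G)$ maps onto $G/K \cong H$, which is cyclic — wait, $H$ need only be abelian, not cyclic. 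Here I must instead argue directly: since $(G,K)$ is a GCP and $V(G) \le K$, the quotient $G/V(G)$ has the property that every nonlinear character vanishes off $K/V(G)$, and by Theorem~1 (applied to $G/V(G)$, which is nonabelian and solvable unless $G/V(G)$ is abelian) $G/V(G)$ is cyclic or elementary abelian $p$; the elementary abelian case is excluded by the next step.

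The main obstacle — and where the real work lies — is ruling out the elementary abelian possibility and pinning down that the quotient is genuinely cyclic. For this I would use the hypothesis that the Frobenius complement is \emph{abelian} more seriously: an abelian Frobenius complement is cyclic (a standard fact, since Frobenius complements have all Sylow subgroups cyclic or generalized quaternion, and an abelian such group is cyclic). Then $G/K \cong H$ is cyclic, and since $V(G) \le K$, the group $G/V(G)$ surjects onto the cyclic group $G/K$; but I also need the kernel $K/V(G)$ of this surjection to be trivial, i.e.\ $V(G) = K$. To see $K \le V(G)$: pick any nonprincipal $\lambda \in \irr K$; then $\lambda^G$ is irreducible and nonlinear, and $V(\lambda^G) = K$ (the induced character is supported on the conjugates of $K$, which generate $K$ since $K \trianglelefteq G$ and... actually $V(\lambda^G) \le K$ is automatic, and equality holds because $\lambda^G$ restricted to $K$ is $\sum_{h\in H}\lambda^h$, whose support generates $K$). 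Hence $K = V(\lambda^G) \le V(G)$ by Lemma~\ref{V(G)}, giving $V(G) = K$ and $G/V(G) \cong G/K \cong H$ cyclic. The delicate point to get right is the citation/justification that an abelian Frobenius complement is cyclic and the precise computation that $V(\lambda^G) = K$; everything else is routine bookkeeping with the results already proved.
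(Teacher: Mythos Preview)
Your plan is correct and rests on the same key fact the paper uses (an abelian Frobenius complement is cyclic), but the route differs. The paper does not first pass to $G$ itself being Frobenius, nor does it compute $V(G)$ exactly: with $G/K$ Frobenius and the Frobenius kernel $N/K$ arranged to be a chief factor of $G$, it picks a single $\chi\in\nl{G/K}$, notes $K\le\ker\chi\le V(\chi)\le N$ (since $\chi$ is induced from $N$), and uses $\ker\chi<V(\chi)$ together with the chief-factor condition to force $V(\chi)=N$; then $G/V(\chi)\cong G/N$ is an abelian Frobenius complement, hence cyclic, and $V(\chi)\le V(G)$ finishes. Your approach instead buys an exact identification $V(G)=K$ in the reduced Frobenius case, at the cost of more bookkeeping.

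Two comments on that bookkeeping. First, the appeal to Theorem~1 in your second paragraph would be circular (this lemma is half of its proof), so it is right that you abandon it. Second, your claim $V(\lambda^G)=K$ is true, but the ``support generates $K$'' sketch is not a proof. You can justify it cleanly by divisibility: $\gpindex G{V(\lambda^G)}$ divides $\lambda^G(1)^2=|H|^2$ for linear $\lambda$, while $\gpindex G{V(\lambda^G)}=|H|\cdot\gpindex K{V(\lambda^G)}$ and $\gcd(|K|,|H|)=1$, forcing $V(\lambda^G)=K$. Simpler still, you can bypass this step entirely: in a Frobenius group with abelian complement one has $G'=K$ (since $K=[K,H]$ by the coprime fixed-point-free action and $G/K$ is abelian), so $K=G'\le V(G)$ directly from Lemma~\ref{V(G)}, and the induced-character computation is unnecessary.
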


\begin{proof}
We now suppose that $K$ is a normal subgroup where $G/K$ is a
Frobenius group with Frobenius kernel $N/K$.  Without loss of
generality, we may assume that $N/K$ is a chief factor for $G$. Fix
$\chi \in \nl {G/K}$.  We know that $\chi$ is induced from $N$, so
$K \le \ker {\chi} \le V (\chi) \le N$ (see Chapter 12 of
\cite{text}). Since an irreducible character cannot be a multiple of
the the regular character of some quotient, it follows that $\ker
{\chi} < V (\chi)$.  Since $K/N$ is a chief factor, this implies
that $K = \ker {\chi}$ and $V (\chi) = N$. Now, $G/N$ is isomorphic
to an abelian Frobenius complement, and so, $G/V (\chi)$ is cyclic.
Since $V (\chi) \le V (G)$ by Lemma \ref{V(G)}, this gives the
result.
\end{proof}

Using Chapter 12 of \cite{text}, we know that if $G$ is a nonabelian
solvable group, then it must have a nonabelian quotient that is
either nilpotent or a Frobenius group with an abelian Frobenius
complement. Thus, we may combine the two previous lemmas to
determine the structure of $G/V (G)$ when $G$ is nonabelian and
solvable.

\begin{corollary}
Suppose $G$ is a nonabelian solvable group.  Then either $G/ V(G)$
is an elementary abelian $p$-group for some prime $p$ or $G/ V(G)$
is cyclic.
\end{corollary}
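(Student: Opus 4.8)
The plan is to deduce this corollary immediately from the two preceding lemmas together with the classical structure theory in Chapter 12 of \cite{text}. The key observation, which the text already signals, is that a nonabelian solvable group $G$ always possesses a nonabelian quotient of a very restricted type: either a nonabelian nilpotent quotient, or a quotient that is a Frobenius group with abelian Frobenius complement. I would first recall why this is so: take $K$ maximal among normal subgroups of $G$ with $G/K$ nonabelian (such $K$ exists since $G$ itself is nonabelian). Then $G/K$ is a nonabelian group all of whose proper quotients are abelian, and by Lemma 12.3 of \cite{text} such a group is either a $p$-group or a Frobenius group whose kernel is the (elementary abelian) minimal normal subgroup $G'/K$ and whose complement $G/G'$ is abelian; solvability guarantees we are in one of these two cases rather than some nonsolvable configuration.

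Once that dichotomy is in hand, the proof is just a case split. In the first case, $G$ has a nonabelian nilpotent quotient $G/K$, so the penultimate lemma applies directly and yields that $G/V(G)$ is an elementary abelian $p$-group for some prime $p$. In the second case, $G$ has a quotient that is a Frobenius group with abelian Frobenius complement, so the last lemma applies and gives that $G/V(G)$ is cyclic. In either case we land in one of the two alternatives asserted by the corollary, so we are done.

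I do not expect any genuine obstacle here; this is a packaging result. The only point requiring a word of care is the justification of the dichotomy itself, i.e.\ invoking Lemma 12.3 of \cite{text} correctly to see that a nonabelian solvable group with all proper quotients abelian is either a $p$-group or a Frobenius group with abelian complement. Everything else is a direct citation of the two lemmas just proved, so the write-up will be short.

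\begin{proof}
Since $G$ is nonabelian, we may choose a normal subgroup $K$ of $G$ maximal with the property that $G/K$ is nonabelian.  Then every proper quotient of $G/K$ is abelian, so by Lemma 12.3 of \cite{text} and the solvability of $G$, the group $G/K$ is either a $p$-group for some prime $p$ or a Frobenius group whose Frobenius complement is abelian.  In the first case, $G$ has a nonabelian nilpotent quotient, and the previous lemma gives that $G/V(G)$ is an elementary abelian $p$-group for some prime $p$.  In the second case, $G$ has a quotient that is a Frobenius group with an abelian Frobenius complement, and the last lemma gives that $G/V(G)$ is cyclic.  In either case the conclusion holds.
\end{proof}
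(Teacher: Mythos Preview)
Your proposal is correct and follows essentially the same approach as the paper: choose $K$ maximal with $G/K$ nonabelian, invoke Lemma~12.3 of \cite{text} to get the $p$-group/Frobenius dichotomy, and then apply the two preceding lemmas. The paper's write-up is nearly identical to yours, only slightly terser.
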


\begin{proof}
Take $K$ to be maximal subject to $K$ being normal and $G/K$ is
nonabelian.  By Lemma 12.3 of \cite{text}, $G/K$ is either a
nonabelian $p$-group or $G/K$ is a Frobenius group with an abelian
Frobenius complement.  Thus, the previous two lemmas apply and yield
the result.
\end{proof}

%For nonsolvable groups, we can show that $V (G)$ will not be proper.
%
%\begin{lemma}
%Let $G$ be a nonsolvable group.  Then $G = V (G)$.
%\end{lemma}
%
%\begin{proof}
%Take $K$ to be maximal subject to $K$ is normal in $G$ and $G/K$ is
%nonsolvable.  Since $V (G/K) \le V(G)/K$, it suffices to prove that
%$G/K = V (G/K)$.  Hence, we may assume that $K = 1$.  Notice now
%that $G$ has a unique minimal normal subgroup $N$.  Also, $G/N$ is
%solvable and $N$ is not solvable. Hence, $N$ is a direct product of
%$n$ copies of some nonabelian simple group $S$.  In \cite{complete},
%it is shown that $N$ has a nonlinear irreducible character $\theta$
%that extends to $\chi \in \irr G$.  (There are two parts to this
%proof.  First, for every simple group $S$, it is shown in Theorems
%2, 3, and 4 of \cite{complete} that $S$ has a nonlinear irreducible
%character that extends to the automorphism group of $S$.  Second, it
%is shown in Lemma 5 of \cite{complete} it shown that if $N$ is a
%minimal normal subgroup of $G$ so that $N$ is a direct copy of $n$
%copies of a nonabelian simple group and $\sigma \in \irr S$ extends
%to the automorphism group of $S$, then $\sigma \times \cdots \times
%\sigma$ extends to $G$.)
%
%Let $\gamma$ be an irreducible constituent of $\chi_{V (\chi)}$.  We
%know that $\gpindex G{V (G)}$ divides $(\chi (1)/\gamma (1))^2$ (see
%the comments on page 200 of \cite{text}).  It is not difficult to
%see that $N \le V (\chi)$, and so, $\chi$ is an extension of
%$\gamma$.  This implies that $G = V (\chi)$.  Hence, $G = V (G)$,
%and the result is proved.
%\end{proof}

We now consider the terms of the upper central series of $G$. Let
$Z_1 = \gpcen G$ and $Z_i/Z_{i-1} = \gpcen {G/Z_{i-1}}$ for $i > 1$.
We begin with an inductive step.

\begin{lemma}\label{indstep}
Suppose $G$ is a group so that $G' \not\le Z_m$, then $Z_{m+1} \le V
(G)$.
\end{lemma}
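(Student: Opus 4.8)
The plan is to show that every element $g \in G \setminus Z_{m+1}$ is a Camina element for $G$; by Lemma \ref{V(G)}(5) (applied to the GCP $(G, Z_{m+1})$ — or directly, since $V(G)$ is the intersection of all GCP-normal-subgroups) this will force $V(G) \le Z_{m+1}$... wait, that is the reverse inequality. Let me reconsider: we actually want $Z_{m+1} \le V(G)$, so the right strategy is to show $G \setminus V(G) \subseteq Z_{m+1}$, equivalently that no element outside $Z_{m+1}$ can lie outside $V(G)$. So the plan is: take $g \in G \setminus Z_{m+1}$ and show $g \in V(G)$. Suppose not, so $g \in G \setminus V(G)$; then by Lemma \ref{V(G)}(4), $g$ is a Camina element for $G$, so by Lemma \ref{facts}(3), for every $z \in G'$ there is $y \in G$ with $[g,y] = z$, i.e.\ $[g,G] = G'$.

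The key step is to exploit the hypothesis $G' \not\le Z_m$ to derive a contradiction from $[g,G] = G'$ when $g \notin Z_{m+1}$. Since $g \notin Z_{m+1}$, the image $\bar g$ of $g$ in $G/Z_m$ is not central, so there is some $x \in G$ with $[g,x] \notin Z_m$. Now I want to combine this with the Camina condition. The idea: because $g$ is a Camina element, $|\cent{G}{g}| = \gpindex G{G'}$ by Lemma \ref{facts}(2). On the other hand I will show $\cent{G}{g}$ is too large. Specifically, I expect to argue that the map $G \to G'$, $y \mapsto [g,y]$, factors in a controlled way modulo $Z_m$: if $g$ is a Camina element then for $z \in G'$ the commutator $[g,y]=z$ can be solved, and one tracks this through the central quotient. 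The cleanest route is likely induction on $m$: the case $m = 1$ says $G' \not\le Z_1 = \gpcen G$ forces $Z_2 \le V(G)$, and then the inductive step passes to a suitable quotient.

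Here is the quotient approach I would actually carry out. If $g \in G \setminus V(G)$ is a Camina element, consider $\bar G = G/Z_1$; I claim the image $\bar g$ is a Camina element of $\bar G$ provided $\bar g \ne 1$, which needs $g \notin Z_1$ — and indeed $g \notin Z_{m+1} \supseteq Z_1$ (for $m \ge 1$). Since $[g,G] = G'$, passing mod $Z_1$ gives $[\bar g, \bar G] = \bar G{}'$, which by Lemma \ref{facts} (in $\bar G$) says $\bar g$ is a Camina element of $\bar G$, hence $\bar g \in G \setminus V(\bar G)$ can be examined there. The hypothesis $G' \not\le Z_m$ becomes $\bar G{}' \not\le Z_{m-1}(\bar G)$ since $Z_i(G/Z_1) = Z_{i+1}(G)/Z_1$. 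By induction (base case $m=1$ handled directly as below) we get $Z_m(\bar G) \le V(\bar G)$, i.e.\ $Z_{m+1}(G)/Z_1 \le V(G/Z_1) \le V(G)/Z_1$ using Lemma \ref{quots}, and therefore $Z_{m+1}(G) \le V(G)$ — contradicting $g \notin V(G)$ once we know $g \in Z_{m+1}$...

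I realize the logic is cleaner stated positively, so let me restate the plan without the contradiction framing. Prove by induction on $m \ge 1$ the statement: if $G' \not\le Z_m(G)$ then $Z_{m+1}(G) \le V(G)$. For the base case $m = 1$: assume $G' \not\le \gpcen G$, so $G$ is nonabelian; I must show $Z_2 \le V(G)$. Take $g \in Z_2$. If $g \in \gpcen G \le V(G)$ we are done, so assume $g \in Z_2 \setminus Z_1$; then $[g,G] \le Z_1$ but $[g,G] \ne 1$, so $[g,G]$ is a nontrivial subgroup of $\gpcen G$ properly contained in $G'$ (as $G' \not\le Z_1$ means $G' \ne [g,G]$ when $[g,G] \le Z_1$ — more carefully, $[g,G] \le Z_1 < G'$ forces $[g,G] < G'$). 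Hence $g$ is \emph{not} a Camina element (Lemma \ref{facts}(3) fails), so by Lemma \ref{V(G)}(4) $g \notin G \setminus V(G)$, i.e.\ $g \in V(G)$. This gives $Z_2 \le V(G)$. For the inductive step $m \ge 2$: assume the result for $m-1$ and suppose $G' \not\le Z_m$. Set $\bar G = G/Z_1$; then $\bar G{}' = G'Z_1/Z_1 \not\le Z_{m-1}(\bar G) = Z_m/Z_1$ (since $G' \le Z_m$ would follow otherwise), so by induction $Z_{m-1}(\bar G) \le V(\bar G)$, that is $Z_m/Z_1 \le V(G/Z_1)$. If $Z_m < G$ then $G/Z_1$ could be abelian only if $G' \le Z_1$, which is excluded, so $G/Z_1$ is nonabelian and Lemma \ref{quots} gives $V(G/Z_1) \le V(G)/Z_1$; combining, $Z_m \le V(G)$. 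But we want $Z_{m+1} \le V(G)$, so one more level is needed: repeat the base-case argument one step up. Take $g \in Z_{m+1} \setminus V(G)$; then $g$ is a Camina element, $[g,G] = G'$, yet $[g,G] \le Z_m$ since $g \in Z_{m+1}$, forcing $G' = [g,G] \le Z_m$, contradicting the hypothesis. Hence no such $g$ exists and $Z_{m+1} \le V(G)$.

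The main obstacle is organizing the induction so the "two levels" ($Z_m$ versus $Z_{m+1}$) are handled without circularity; the slick observation making it work is that an element $g$ of $Z_{c}\setminus Z_{c-1}$ satisfies $[g,G]\le Z_{c-1}$, so if $[g,G]=G'$ then $G'\le Z_{c-1}$ — this single commutator identity, combined with Lemma \ref{facts}(3) and Lemma \ref{V(G)}(4), is really the whole engine, and the central-series indexing shift $Z_i(G/Z_1)=Z_{i+1}(G)/Z_1$ together with Lemma \ref{quots} handles the descent.
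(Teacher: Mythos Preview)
Your proof is correct, though the presentation wanders before settling. The decisive observation you isolate at the end --- if $g\in Z_{m+1}\setminus V(G)$ then $g$ is a Camina element, so every $z\in G'$ equals some $[g,y]\in Z_m$, forcing $G'\le Z_m$ contrary to hypothesis --- is already a complete proof of the lemma and requires no induction. (Your inductive step via $\bar G=G/Z_1$ contains an index slip: the inductive hypothesis at level $m-1$ applied to $\bar G$ yields $Z_m(\bar G)\le V(\bar G)$, not $Z_{m-1}(\bar G)\le V(\bar G)$; since $Z_m(\bar G)=Z_{m+1}(G)/Z_1$, the induction together with Lemma \ref{quots} already gives $Z_{m+1}\le V(G)$ without the extra step you append. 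But that extra step is itself a full proof, so no harm done.)

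The paper takes a different route: it proves $Z_i\le V(G)$ for $i=1,\dots,m+1$ by induction on $i$, staying within the GCP framework. The base case is $\gpcen G\le V(G)$ from Lemma \ref{V(G)}; for the step, from $Z_{i-1}\le V(G)$ and $G'\not\le Z_{i-1}$ one gets via Lemma \ref{basics}(3) that $(G/Z_{i-1},V(G)/Z_{i-1})$ is a GCP of a nonabelian group, and Lemma \ref{basics}(4) then forces $\gpcen{G/Z_{i-1}}\le V(G)/Z_{i-1}$, i.e., $Z_i\le V(G)$. Your argument is more elementary --- it uses only the commutator characterization of Camina elements (Lemma \ref{facts}) and the bare definition of the upper central series --- whereas the paper's version exercises the GCP machinery already developed. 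The paper's approach has the mild bonus of exhibiting the whole chain $Z_1\le\cdots\le Z_{m+1}\le V(G)$ explicitly; yours gets straight to the conclusion.
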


\begin{proof}
We will prove this lemma by induction.  Our inductive hypothesis is
that $Z_i (G) \le V (G)$ for all $i$ with $1 \le i \le m+1$, and we
work by induction on $i$. The base case ($i=1$) is Lemma \ref{V(G)}.
For the inductive case ($i > 1$), we assume that $Z_{i-1} \le V
(G)$. Since $i \le m$, we see that $G' \not\le Z_{i-1}$. By Lemma
\ref{basics} (3), $(G/Z_{i-1}, V (G)/Z_{i-1})$ is a GCP. Applying
Lemma \ref{basics} (4) to $G/Z_{i-1}$, we obtain $\gpcen {G/Z_{i-1}}
\le V (G)/Z_{i-1}$, and so, $Z_i \le V (G)$. This proves the
inductive step, and hence the lemma.
\end{proof}

These next two lemmas show that $V (G)$ must be relatively large.
First, when $G$ is nilpotent with class $c$, we show that $V (G)$
must contain $Z_{c-1}$.

\begin{corollary} \label{GCP2}
Suppose $G$ is nilpotent of nilpotence class $c$. Then $Z_{c-1} \le
V(G)$.
\end{corollary}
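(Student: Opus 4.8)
The plan is to derive this as an immediate consequence of Lemma \ref{indstep}. The statement $Z_{c-1} \le V(G)$ should follow by applying that lemma with $m = c-1$, provided we can check its hypothesis, namely that $G' \not\le Z_{c-1}$. So the main content of the proof is just verifying this hypothesis from the assumption that $G$ is nilpotent of class exactly $c$.

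First I would recall that nilpotence class $c$ means $Z_c = G$ but $Z_{c-1} < G$ (equivalently, the upper central series reaches $G$ in exactly $c$ steps). I then want to argue that $G' \not\le Z_{c-1}$. Suppose to the contrary that $G' \le Z_{c-1}$. Then $(G/Z_{c-1})' = G'Z_{c-1}/Z_{c-1} = 1$, so $G/Z_{c-1}$ is abelian. But $G/Z_{c-1}$ being abelian means $Z(G/Z_{c-1}) = G/Z_{c-1}$, i.e. $Z_c = G$ — that part is fine — but it also forces $Z_{c-1}$ to already equal $G$: indeed if $G/Z_{c-1}$ is abelian and nontrivial, then... actually the cleaner route is that $G/Z_{c-1}$ abelian and the class being exactly $c \ge 2$ (we may assume $c \ge 2$ since $G$ nonabelian; the case $c \le 1$ is trivial as $Z_{c-1} = Z_0 = 1 \le V(G)$) gives a contradiction: an abelian group has trivial center quotient only... let me instead just say: if $G/Z_{c-1}$ is abelian then $Z(G/Z_{c-1}) = G/Z_{c-1}$, which combined with $Z_{c-1} < G$ (class exactly $c$) means the ascending central series of $G$ reaches $G$ at step $c-1$ already only if $Z_{c-1} = G$, contradiction — the point being that $Z_c/Z_{c-1} = Z(G/Z_{c-1})$ equals the whole group forces nothing new, so I must instead observe directly that $G' \le Z_{c-1}$ would make $G$ have class $\le c-1$. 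Concretely, $G' \le Z_{c-1}$ implies $\gamma_2(G) = G' \le Z_{c-1}$, and then $\gamma_{i+1}(G) \le Z_{c-i}$ by an easy induction, giving $\gamma_{c+1}(G)$ ... better: from $G' \le Z_{c-1}$ we get $\gamma_c(G) \le Z_1$ and $\gamma_{c+1}(G) = 1$ is automatic; the useful consequence is $\gamma_c(G) \le Z_1 = Z(G)$, hence $[\gamma_{c-1}(G), G] \le Z(G)$... I think the slickest phrasing is simply: $G' \le Z_{c-1}$ $\Rightarrow$ $G/Z_{c-1}$ abelian $\Rightarrow$ $G = Z_{c-1}$ because in a nilpotent group $N < G$ implies $N < Z(G)N$ is false in general, so instead I will use that the upper and lower central series have the same length, and $G' \le Z_{c-1}$ forces the lower central series length to be at most $c-1$, contradicting class $c$.

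The honest clean argument I will write: assume $c \ge 2$ (else trivial). If $G' \le Z_{c-1}$, then since $Z_{c-1}/Z_{c-2} = Z(G/Z_{c-2})$ and the class of $G/Z_{c-2}$ is $2$ (or less), we have $(G/Z_{c-2})' \le Z(G/Z_{c-2})$ automatically, which is not yet a contradiction — so this path is circular. The correct non-circular statement is the standard fact that for a nilpotent group of class $c$, $\gamma_c(G) \ne 1$ and $\gamma_c(G) \le Z(G)$; dually $G' \not\le Z_{c-1}$ whenever $c \ge 2$, because $G' \le Z_{c-1}$ would give $\gamma_i(G) \le Z_{c-i+1}$ for all $i \ge 2$ by induction (base $i=2$ is the hypothesis, step uses $\gamma_{i+1} = [\gamma_i, G] \le [Z_{c-i+1}, G] \le Z_{c-i}$), and taking $i = c$ yields $\gamma_c(G) \le Z_1$, and taking $i = c+1$... that still doesn't contradict. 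The genuine obstruction here — and I expect this to be the one subtle point — is pinning down exactly why $G' \not\le Z_{c-1}$: it is because if $G' \le Z_{c-1}$ then $G/Z_{c-1}$ is abelian, so $Z_{c-1} = Z_{c}$ would mean the series stabilized, but $Z_{c-1} < Z_c = G$, yet stabilization of the upper central series at a proper subgroup is impossible for nilpotent $G$; hence $G/Z_{c-1}$ abelian is consistent only with $Z_{c-1} = G$, contradiction. I will state it that way. Thus $G' \not\le Z_{c-1}$, and Lemma \ref{indstep} with $m = c-1$ gives $Z_c \le V(G)$, i.e. $Z_{c-1} \le V(G)$ after correcting the index — actually Lemma \ref{indstep} says $G' \not\le Z_m \Rightarrow Z_{m+1} \le V(G)$, so with $m = c-2$ and the hypothesis $G' \not\le Z_{c-2}$ (which holds for the same reason, since $c - 2 < c - 1$) we get $Z_{c-1} \le V(G)$ directly. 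I will use $m = c-2$, note $G' \not\le Z_{c-2}$ since a nilpotent group of class $c \ge 2$ has $G/Z_{c-2}$ nonabelian, and conclude. The case $c = 1$ (G abelian) is excluded since Corollary \ref{GCP2} presumes $G$ nilpotent of class $c$ in a context where $V(G)$ is defined, i.e. $G$ nonabelian, so $c \ge 2$ and $Z_{c-1}$ makes sense with $c - 2 \ge 0$.

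\begin{proof}
Since $V(G)$ is defined, $G$ is nonabelian, so $c \ge 2$ and hence $c - 2 \ge 0$. We claim that $G' \not\le Z_{c-2}$. Indeed, if $G' \le Z_{c-2}$, then $G/Z_{c-2}$ would be abelian, so that $\gpcen {G/Z_{c-2}} = G/Z_{c-2}$ and thus $Z_{c-1} = G$. This contradicts the fact that $G$ has nilpotence class exactly $c$, which forces $Z_{c-1} < Z_c = G$. Therefore $G' \not\le Z_{c-2}$, and applying Lemma \ref{indstep} with $m = c-2$ yields $Z_{c-1} \le V(G)$, as desired.
\end{proof}
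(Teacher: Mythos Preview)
Your proof is correct and follows exactly the paper's approach: apply Lemma \ref{indstep} with $m = c-2$, using that $G' \not\le Z_{c-2}$. The paper simply asserts $G' \not\le Z_{c-2}$ without justification, so your argument for this fact (that $G' \le Z_{c-2}$ would force $Z_{c-1} = G$, contradicting class exactly $c$) is a welcome addition.
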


\begin{proof}
We know that $G' \not\le Z_{c-2} $, so by Lemma \ref{indstep}, we
have $Z_{c-1} \le V (G)$.
\end{proof}

%These next two results do not really fit with the rest of the paper.
Also, when $G$ is not nilpotent we get a similar result. We set
$Z_\infty = Z_n$ when for some integer $n$ we have $Z_n = Z_m$ for
all $m \ge n$.  Often $Z_{\infty}$ is called the hyper-center of
$G$.  We show that if $G$ is not nilpotent, then $V (G)$ must
contain the hyper-center of $G$.

\begin{corollary}
Suppose $G$ is not nilpotent.  Then $Z_{\infty} \le V (G)$.
\end{corollary}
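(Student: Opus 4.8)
The statement to prove is: if $G$ is not nilpotent, then $Z_\infty \le V(G)$. The natural approach is to reduce to the inductive machinery already set up in Lemma~\ref{indstep}, which says that whenever $G' \not\le Z_m$ we get $Z_{m+1} \le V(G)$. So the whole game is to find an $m$ with $Z_m = Z_\infty$ and $G' \not\le Z_m$; then Lemma~\ref{indstep} immediately gives $Z_{m+1} \le V(G)$, and since $Z_{m+1} = Z_\infty$ (because the upper central series has stabilized at $Z_m$), we are done.

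First I would fix $n$ so that $Z_n = Z_m$ for all $m \ge n$, i.e.\ $Z_n = Z_\infty$, and set $m = n$. The key claim is that $G' \not\le Z_\infty$. Suppose for contradiction that $G' \le Z_\infty = Z_n$. Then in the quotient $G/Z_n$ we have $(G/Z_n)' = G'Z_n/Z_n = 1$, so $G/Z_n$ is abelian. But an abelian quotient is in particular nilpotent, and a standard fact is that if $G/Z_\infty$ is nilpotent then $G$ itself is nilpotent (the upper central series of $G$ would continue past $Z_n$, contradicting $Z_n = Z_\infty$ — concretely, $Z_{n+1}/Z_n = \gpcen{G/Z_n} = G/Z_n$ since that quotient is abelian, forcing $Z_{n+1} = G \ne Z_n$ unless $G = Z_n$, in which case $G$ is abelian hence nilpotent). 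Either way $G$ comes out nilpotent, contradicting the hypothesis.

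Hence $G' \not\le Z_n$. Now apply Lemma~\ref{indstep} with this $m = n$: we conclude $Z_{n+1} \le V(G)$. But $Z_{n+1} = Z_n = Z_\infty$ by the choice of $n$, so $Z_\infty \le V(G)$, as required.

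**Main obstacle.** There is essentially no serious obstacle here; the only point requiring a moment's care is the contradiction step, namely verifying that $G' \le Z_\infty$ forces $G$ nilpotent. The cleanest way to phrase it is exactly as above: $G' \le Z_n$ makes $G/Z_n$ abelian, so $\gpcen{G/Z_n} = G/Z_n$, whence $Z_{n+1} = G$; combined with $Z_{n+1} = Z_n$ this gives $G = Z_n$, so $G$ is abelian and in particular nilpotent — contradicting our assumption that $G$ is not nilpotent. I would also note in passing that a non-nilpotent group is automatically nonabelian, so the earlier results (e.g.\ Lemma~\ref{V(G)}) that require nonabelianness are available, though in the argument above they are subsumed into the appeal to Lemma~\ref{indstep}.
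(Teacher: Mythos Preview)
Your proof is correct and follows the same approach as the paper's: choose $n$ with $Z_n = Z_\infty$, observe $G' \not\le Z_n$, apply Lemma~\ref{indstep}, and use $Z_{n+1} = Z_\infty$. The only difference is that you supply the justification for $G' \not\le Z_n$ (via the contradiction showing $G$ would be nilpotent), whereas the paper simply asserts this fact.
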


\begin{proof}
Let $n$ be the integer so that $Z_n  = Z_{\infty} $.  We know that
$G' \not\le Z_n$, and so, $Z_{n+1} \le V (G)$.  Since $Z_{n+1} =
Z_{\infty}$, we have the desired conclusion.
\end{proof}

\section{The central series associated with V(G)} \label{cla3}

In \cite{GCG}, we studied many of the properties of generalized
Camina groups.  We will show that many of the properties of
generalized Camina groups can be found in general (nilpotent) groups
using the vanishing-off subgroup. Much of the work in this section
is based on the work in Section 5 of \cite{MacD1}.

Our goal in this section is to prove Theorem \ref{main3}. We prove
Theorem \ref{main3} in a series of lemmas.  In these lemmas, we will
study the following hypothesis.  When the parent group is clear, we
will write $V_i$ in place of $V_i (G)$.  We begin with some
immediate observations.

\begin{lemma} \label{cl3}
Let $G$ be a group.  Then the following facts are true.

\begin{enumerate}
\item For all $i \ge 1$, then $G_{i+1} \le V_i \le G_i$.
\item If $V_n < G_n$ for some $n$, then $V_i < G_i$ for all $i$ with
$1 \le i \le n$.
\item If $V_n < G_n$, then $G/V_n$ is nilpotent of nilpotence class
$n$ and $V_i (G/V_n) = V_i/V_n$ for $1 \le i \le n$.
\end{enumerate}
\end{lemma}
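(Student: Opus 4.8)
The plan is to prove the three parts of Lemma~\ref{cl3} in order, leaning on the characterization of $V(G)$ via Camina elements and GCPs established in Lemma~\ref{V(G)}, together with the definitions $V_1 = V(G)$, $V_i = [V_{i-1},G]$, $G_1 = G$, $G_i = [G_{i-1},G]$.

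\medskip
\noindent\textbf{Part (1).} First I would show $V_i \le G_i$ by induction on $i$. For $i=1$ this is $V(G) \le G = G_1$, trivially. For $i \ge 2$, assuming $V_{i-1} \le G_{i-1}$, we get $V_i = [V_{i-1},G] \le [G_{i-1},G] = G_i$. For the other inclusion $G_{i+1} \le V_i$, I would again induct, but the real content is in the base case $i=1$, i.e. $G' = G_2 \le V_1 = V(G)$, which is exactly Lemma~\ref{V(G)}(2). For the inductive step, assuming $G_i \le V_{i-1}$, we compute $G_{i+1} = [G_i,G] \le [V_{i-1},G] = V_i$. So Part (1) is a routine double induction, each half anchored by a trivial or already-proved base case.

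\medskip
\noindent\textbf{Part (2).} I would prove the contrapositive-style statement by a downward step: it suffices to show that if $V_i < G_i$ then $V_{i-1} < G_{i-1}$, for $2 \le i \le n$. Suppose instead $V_{i-1} = G_{i-1}$. Then $V_i = [V_{i-1},G] = [G_{i-1},G] = G_i$, contradicting $V_i < G_i$. So $V_n < G_n$ forces $V_{n-1} < G_{n-1}$, and iterating down gives $V_i < G_i$ for all $1 \le i \le n$. This part is essentially immediate once Part (1) has fixed the relevant inclusions.

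\medskip
\noindent\textbf{Part (3).} This is the part I expect to require the most care. Set $\bar G = G/V_n$. First, since $V_n < G_n$, Part (2) gives $V_1 < G_1 = G$, so $G/V_1$ is a proper quotient; I want to argue $\bar G$ is nilpotent of class exactly $n$. The lower central series of $\bar G$ is $\overline{G_i} = G_i V_n / V_n$; since $V_n \le G_n$ by Part (1), for $i \le n$ we have $G_i \ge G_n \ge V_n$, so $\overline{G_i} = G_i/V_n$, and $\overline{G_{n+1}} = G_{n+1}V_n/V_n = V_n/V_n = 1$ because $G_{n+1} \le V_n$ by Part (1). Thus $\bar G$ has lower central series terminating at step $n+1$, i.e. nilpotence class at most $n$; and since $\overline{G_n} = G_n/V_n > 1$ by hypothesis, the class is exactly $n$. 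The subtler claim is $V_i(\bar G) = V_i/V_n$ for $1 \le i \le n$. For $i = 1$ I need $V(G/V_n) = V(G)/V_n$: since $\bar G$ is nonabelian (its class is $n \ge 1$, and in fact $\ge 2$ because $V_2 < G_2$ follows from Part (2) when $n \ge 2$; and if $n = 1$ then $\bar G$ abelian makes $V_1(\bar G)$ undefined, so implicitly $n \ge 2$ here), Lemma~\ref{quots} applied with $N = V_n$ gives both $V_n \le V(G)$ (already known) and $V(G/V_n) \le V(G)/V_n$; for the reverse inclusion I would use that $(G,V(G))$ is a GCP (Lemma~\ref{V(G)}(1)) together with Lemma~\ref{basics}(3) to see $(G/V_n, V(G)/V_n)$ is a GCP, hence by Corollary~\ref{GCP1} every nonlinear character of $G/V_n$ vanishes off $V(G)/V_n$, so $V(G/V_n) \le V(G)/V_n$, giving equality. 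Then for $2 \le i \le n$, I induct: $V_i(\bar G) = [V_{i-1}(\bar G), \bar G] = [V_{i-1}/V_n, G/V_n] = [V_{i-1},G]V_n/V_n = V_i V_n/V_n = V_i/V_n$, where the last step uses $V_n \le V_i$, which holds because $V_n \le G_n \le \cdots$; more carefully, $V_n \le V_i$ for $i \le n$ follows since $V_i = [V_{i-1},G] \ge [\,\cdot\,]$ — actually cleanest is: $V_n = [V_{n-1},G] \le \cdots$ so I should instead note $V_n \le V_i$ because the $V_j$ are a descending chain, $V_{j+1} = [V_j,G] \le V_j$ (as $V_j \trianglelefteq G$), so $V_n \le V_i$ for $i \le n$. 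The main obstacle is bookkeeping the various GCP-descent facts correctly and making sure the nonabelian-ness hypotheses needed to invoke Lemma~\ref{quots} and Corollary~\ref{GCP1} are in place; once that is organized, each step is a one-line commutator or character computation.
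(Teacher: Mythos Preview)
Your approach matches the paper's in all three parts: the double induction for (1), the contrapositive step for (2), and for (3) the lower-central-series computation for the nilpotence class together with the induction on $i$ for $V_i(G/V_n) = V_i/V_n$, anchored at $i=1$ by Lemma~\ref{quots}.

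The one flaw is in your base case $i=1$ of Part~(3). You correctly obtain $V(G/V_n) \le V(G)/V_n$ from Lemma~\ref{quots}, but your ``reverse inclusion'' argument does not establish the reverse: showing that $(G/V_n,\, V(G)/V_n)$ is a GCP and invoking Corollary~\ref{GCP1} yields again $V(G/V_n) \le V(G)/V_n$ (by the minimality characterization of $V(G/V_n)$), not $V(G)/V_n \le V(G/V_n)$. You have proved the same inequality twice and then declared equality. The paper's own proof simply cites Lemma~\ref{quots} for the equality without addressing the other direction, so you have not fallen below its level of rigor; but your attempted justification of that direction is circular rather than actually filling the gap.
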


\begin{proof}
We work by induction on $i$.  By Lemma \ref{basics}, $G' = G_2 \le
V_1 \le G_1$, and this gives the base case $i = 1$. We now suppose
that $i > 1$, so that $G_i \le V_{i-1} \le G_{i-1}$. this yields
$G_{i+1} = [G_i,G] \le [V_{i-1},G] = V_i$ and $V_i = [V_{i-1},G] \le
[G_{i-1},G] = G_i$, and (1) is proved.  Suppose $V_i = G_i$ for some
$i$.  Then using induction one can show that $V_j = G_j$ for all $j
\ge i$.  Hence, if $i \le n$ and $V_n < G_n$, then $V_i < G_i$.

Notice that $G_n \le V_{n-1}$ and $V_{n-1}/V_n$ is central in
$G/V_n$.  Hence, $G_n/V_n$ is central in $G/V_n$ and nontrivial.
Thus, $G/V_n$ has nilpotence class $n$.

To prove that $V_i (G/V_n) = V_i/V_n$, we work by induction on $i$.
We start with $i = 1$.  By Lemma \ref{quots}, we know that $V_1
(G/V_n) = V_1/V_n$. Suppose now that $i > 1$, and $V_{i-1} (G/V_n) =
V_{i-1}/V_n$.  We know that $V_i (G/V_n) = [V_{i-1} (G/V_n),G/V_n] =
[V_{i-1}/V_n,G/V_n] = [V_{i-1},G]V_n/V_n = V_i/V_n$ which is the
desired result.
\end{proof}

We now consider the structure of $G/V_2$ when $V_2 < G_2 = G'$.
First, we need some notation.  As in \cite{notes} and \cite{GCG}, a
group $G$ is a VZ-group if all its nonlinear irreducible characters
vanish off of the center.  Hence, $G$ is a VZ-group if and only if
$V (G) = \gpcen G$.  It is easy to see in a VZ-group that all
nonlinear characters will be fully-ramified with respect to the
center.  Many results regarding VZ-groups can be found in
\cite{extreme}.  In Lemma 2.4 of \cite{GCG}, we noted the following
fact:

\begin{lemma} \label{main1} [\cite{GCG}]
Let $G$ be a VZ-group, then $G/\gpcen G$ and $G'$ are elementary
abelian $p$-groups for some prime $p$.  Furthermore, there exist
positive integers $m \le n$ so that $\card {G'} = p^m$ and $\gpindex
G{\gpcen G} = p^{2n}$.  In addition, $\cd G = \{ 1, p^n \}$.
\end{lemma}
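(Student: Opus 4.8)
The plan is to reduce to the case of a $p$-group and then read off all of the numerical data from the single fact that every nonlinear $\chi\in\irr G$ vanishes off $\gpcen G$.

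First I would show that $G=P\times A$ with $P$ a nonabelian $p$-group and $A$ abelian. Since $G$ is a VZ-group we have $G'\le V(G)=\gpcen G$, so $G$ has nilpotence class $2$; in particular $G$ is nilpotent, and we may write $G=P_1\times\cdots\times P_r$ as the product of its Sylow subgroups. If two of them, say $P_1$ and $P_2$, were nonabelian, choose $\chi_1\in\nl{P_1}$ and set $\psi=\chi_1\times 1_{P_2}\times\cdots\times 1_{P_r}\in\nl G$; then $\psi$ is nonzero at every element with a nontrivial $P_2$-coordinate, so $P_2\le V(\psi)\le V(G)=\gpcen G$ and $P_2$ is abelian, a contradiction. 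Hence exactly one Sylow subgroup $P$ is nonabelian and $A$, the product of the remaining Sylow subgroups, is abelian. A short computation then gives $\gpcen G=\gpcen P\times A$, $G'=P'$, $G/\gpcen G\cong P/\gpcen P$, $\cd G=\cd P$, and $V(G)=V(P)\times A$; so $P$ is again a VZ-group and it suffices to prove the statement for $P$.

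Next I would pin down the degrees. For $\chi\in\nl P$ the restriction $\chi_{\gpcen P}$ equals $\chi(1)\lambda$ for a linear character $\lambda$, and $\chi$ vanishes off $\gpcen P$, so $|P|=\sum_{g\in P}|\chi(g)|^2=\sum_{g\in\gpcen P}|\chi(g)|^2=|\gpcen P|\,\chi(1)^2$; thus $\chi(1)^2=\gpindex P{\gpcen P}$. Since $P$ is a nonabelian $p$-group this forces $\gpindex P{\gpcen P}=p^{2n}$ for some $n\ge 1$, forces every nonlinear degree to equal $p^n$, and gives $\cd P=\{1,p^n\}$. For the elementary abelian assertions, recall $P$ has class $2$, so $[x,y]^p=[x^p,y]$ for all $x,y$; it therefore suffices to prove that $P/\gpcen P$ has exponent $p$, for then $x^p\in\gpcen P$ gives $[x,y]^p=1$, which makes $P'$ (abelian, since $P$ has class $2$) elementary abelian and makes $P/\gpcen P$ elementary abelian as well. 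If $|P'|=p$ this is immediate from $[x,y]^p=1$. Otherwise pick a subgroup $W<P'$ of index $p$ (so $W\trianglelefteq P$ because $W\le P'\le\gpcen P$) and pass to $\bar P=P/W$: by Lemma \ref{quots}, $V(\bar P)\le V(P)/W=\gpcen P/W$, while always $\gpcen{\bar P}\le V(\bar P)$ by Lemma \ref{V(G)}, so $\gpcen{\bar P}=\gpcen P/W$ and $\bar P$ is a VZ-group with $|\bar P'|=p$; applying the $|P'|=p$ case to $\bar P$ shows that $\bar P/\gpcen{\bar P}=P/\gpcen P$ has exponent $p$.

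It remains to prove $m\le n$, where $\card{G'}=\card{P'}=p^m$ (and $m\ge 1$ since $G$ is nonabelian) and $\gpindex G{\gpcen G}=p^{2n}$. Here I would use that for $x\in P\setminus\gpcen P$ the element $x$ is a Camina element (VZ), so by Lemma \ref{facts} the homomorphism $P/\gpcen P\to P'$, $\bar y\mapsto[x,y]$, is onto; consequently, for every nontrivial linear character $\mu$ of $P'$, the alternating $\mathbb{F}_p$-bilinear form $b_\mu$ on $P/\gpcen P$ defined by $b_\mu(\bar x,\bar y)=\mu([x,y])$ has trivial radical, i.e.\ is nondegenerate. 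As $\mu$ ranges over $\irr{P'}$ the forms $b_\mu$ span an $m$-dimensional space of alternating forms on the $2n$-dimensional $\mathbb{F}_p$-space $P/\gpcen P$, every nonzero member of which is nondegenerate. The bound $m\le n$ for such a space of forms --- equivalently, an upper bound for a totally ``anisotropic'' family of symplectic forms, a bound that is sharp and realized by scalar multiplication on $\mathbb{F}_{p^n}^2$ --- is the one genuinely nontrivial input, and I expect this to be the main obstacle; for it I would either carry out the symplectic-geometry argument directly or cite the corresponding statement from \cite{extreme}. (The inequality also surfaces from the estimate $p^n\le\gpindex PA$ for a maximal abelian subgroup $A\ge\gpcen P$, which yields $\gpindex A{\gpcen P}\le p^n$.) Assembling these pieces gives the lemma.
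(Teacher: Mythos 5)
The paper itself does not prove Lemma \ref{main1}; it is quoted from Lemma 2.4 of \cite{GCG}, so there is no internal argument to compare against and your proposal has to stand on its own. Most of it does: the splitting $G=P\times A$ with $P$ the unique nonabelian Sylow subgroup (forced because a second nonabelian factor would land inside $V(G)=\gpcen G$), the orthogonality computation giving $\chi(1)^2=\gpindex P{\gpcen P}=p^{2n}$ and $\cd G=\{1,p^n\}$, the exponent-$p$ argument for $P/\gpcen P$ (the case $\card{P'}=p$ plus the passage to $P/W$ using Lemmas \ref{quots} and \ref{V(G)}, which correctly shows $P/W$ is again a VZ-group), and the translation of the Camina-element property into an $m$-dimensional $\mathbb{F}_p$-space of alternating forms on $P/\gpcen P\cong\mathbb{F}_p^{2n}$ whose nonzero members are all nondegenerate, are all correct and in the spirit of the tools available in this paper.

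The genuine gap is the inequality $m\le n$ itself: you reduce it to the assertion that such a space of forms has dimension at most $n$, but you do not prove that assertion, explicitly deferring it to ``the symplectic-geometry argument'' or a hoped-for statement in \cite{extreme}; and your parenthetical fallback via a maximal abelian subgroup $A\ge\gpcen P$ does not work as written, since it only produces the two lower bounds $p^n\le\gpindex PA$ and $p^m=\gpindex P{\cent Pa}\le\gpindex PA$ (for $a\in A\setminus\gpcen P$), which bound $m$ and $n$ by the same quantity without comparing them. The missing step does have a short self-contained proof you could have supplied: choose a basis $B_1,\dots,B_m$ of your space of forms, viewed as alternating $2n\times 2n$ matrices over $\mathbb{F}_p$; then ${\rm Pf}(t_1B_1+\dots+t_mB_m)$ is a polynomial of degree $n$ in $t_1,\dots,t_m$, and a form in the space is degenerate exactly when its Pfaffian vanishes. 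If $m>n$, the Chevalley--Warning theorem gives a nonzero tuple $(t_1,\dots,t_m)$ annihilating the Pfaffian, i.e.\ a nonzero degenerate form, contradicting nondegeneracy; hence $m\le n$. With that (or a precise citation in its place), your proof is complete.
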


When $V_2 < G'$, we will see that $G/V_2$ has the structure of a
VZ-group.  Notice that this implies that if  $\gpindex G{V_1}$ is
not a square, then $\gpindex {G'}{V_2} = 1$.

\begin{lemma}\label{VZquo}
Let $G$ be a group so that $V_2 < G' = G_2$.  Then $G/V_2$ is a
VZ-group with $V (G/V_2) = V_1/V_2$.  In particular, $G/ V_1$ and
$G'/V_2$ are elementary abelian $p$-groups for some prime $p$, and
there exist positive integers $m \le n$ so that $\gpindex G{V_1} =
p^{2n}$ and $\gpindex {G'}{V_2} = p^m$.  In addition, $\cd {G/{V_2}}
= \{ 1, p^n \}$.
\end{lemma}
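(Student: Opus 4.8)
The plan is to show that $G/V_2$ is a VZ-group by identifying its vanishing-off subgroup with $V_1/V_2$ and then applying Lemma \ref{main1}. First I would apply Lemma \ref{cl3}(3) with $n = 2$: since $V_2 < G_2$, we get that $G/V_2$ is nilpotent of class $2$, and moreover $V_1(G/V_2) = V_1/V_2$ and $V_2(G/V_2) = V_2/V_2 = 1$. The key observation is that in a group $H$ of nilpotence class $2$, we have $H' \le \gpcen H$, so $V_2(H) = [V_1(H), H] = [V(H), H]$, and I claim $V(H) = \gpcen H$ exactly when $[V(H), H] = 1$. Indeed, $\gpcen H \le V(H)$ always holds by Lemma \ref{V(G)}(3); conversely, if $[V(H), H] = 1$ then $V(H) \le \gpcen H$, giving equality. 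Applying this with $H = G/V_2$: we have $V_2(G/V_2) = [V(G/V_2), G/V_2] = 1$, so $V(G/V_2) \le \gpcen{G/V_2}$, and combined with the reverse inclusion we conclude $V(G/V_2) = \gpcen{G/V_2} = V_1/V_2$. Hence $G/V_2$ is a VZ-group.

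Once $G/V_2$ is known to be a VZ-group with $V(G/V_2) = V_1/V_2$, the structural conclusions follow directly from Lemma \ref{main1} applied to the group $G/V_2$. That lemma gives a prime $p$ such that $(G/V_2)/\gpcen{G/V_2}$ and $(G/V_2)'$ are elementary abelian $p$-groups, together with integers $m \le n$ with $\card{(G/V_2)'} = p^m$ and $\gpindex{G/V_2}{\gpcen{G/V_2}} = p^{2n}$, and $\cd{G/V_2} = \{1, p^n\}$. It remains only to translate each of these back into the stated form. We have $(G/V_2)/\gpcen{G/V_2} = (G/V_2)/(V_1/V_2) \cong G/V_1$, so $G/V_1$ is elementary abelian $p$ of order $p^{2n}$, i.e.\ $\gpindex G{V_1} = p^{2n}$. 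For the derived subgroup, since $G/V_2$ has class $2$ we have $(G/V_2)' = G'V_2/V_2 = G'/V_2$ (using $V_2 \le G'$ from Lemma \ref{cl3}(1)), so $G'/V_2$ is elementary abelian $p$ with $\gpindex{G'}{V_2} = p^m$. Finally $\cd{G/V_2} = \{1, p^n\}$ is already in the desired form.

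The main obstacle is the clean identification $V(G/V_2) = \gpcen{G/V_2}$; everything else is bookkeeping with quotients and an invocation of Lemma \ref{main1}. The subtle point is that one must genuinely use that $V_2 < G'$ (not merely $V_2 \le G'$): if $V_2 = G'$ then $G/V_2$ is abelian and has no nonlinear characters, so the notion of VZ-group is vacuous and Lemma \ref{main1} does not apply. The hypothesis $V_2 < G_2$ guarantees, via Lemma \ref{cl3}(3), that $G/V_2$ is genuinely nonabelian (of class exactly $2$), which is precisely what makes $V(G/V_2)$ a meaningful proper subgroup and the VZ-machinery available. I would also double-check that $[V(G/V_2), G/V_2] = V_2(G/V_2)$ really is trivial — this is immediate from Lemma \ref{cl3}(3) since $V_2(G/V_2) = V_2/V_2$ — so there is no circularity in computing the vanishing-off subgroup of the quotient.
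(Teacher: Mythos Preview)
Your proof is correct and follows essentially the same route as the paper: identify $V(G/V_2)=V_1/V_2$ via Lemma~\ref{cl3}, show this equals $\gpcen{G/V_2}$ using the inclusion $\gpcen{G/V_2}\le V(G/V_2)$ from Lemma~\ref{V(G)} together with $[V_1,G]=V_2$, and then invoke Lemma~\ref{main1}. The only cosmetic difference is that you phrase the centrality step as $V_2(G/V_2)=1$ rather than directly observing that $V_1/V_2$ is central, but these are the same computation.
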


\begin{proof}
Let $V = V_1$ and $W = V_2$.  Since $W < G'$, it follows that $V <
G$ and $G/W$ is a nonabelian group. By Lemma \ref{cl3}, $V/W =
V(G/W)$. It is easy to see that $V/W$ is contained in $\gpcen
{G/W}$, and by Lemma \ref{V(G)}, $V(G/W)$ contains $\gpcen {G/W}$.
Therefore, $V (G/W) = \gpcen {G/W}$, and $G/W$ is a VZ-group. The
remaining results follow from Lemma \ref{main1}.
\end{proof}

We now assume that $V_2 < G' = G_2$.  Following Lemma \ref{VZquo},
we write $\gpindex G{V_1} = p^{2n}$ and $\gpindex {G'}{V_2} = p^m$
(where $m \le n$).

\begin{lemma} \label{Fact 4}
Assume $V_2 < G_2$.  Then $G_i/V_i$ is an elementary abelian
$p$-group for all $i \ge 1$ for some prime $p$.
\end{lemma}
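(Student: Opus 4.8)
The plan is to proceed by induction on $i$, with Lemma~\ref{VZquo} handling the base cases $i = 1$ and (implicitly) $i = 2$: by that lemma, $G/V_1$ and $G'/V_2 = G_2/V_2$ are elementary abelian $p$-groups for the prime $p$ determined there, so the prime is fixed once and for all. For the inductive step, I would assume $G_{i-1}/V_{i-1}$ is elementary abelian $p$ for some $i \ge 3$ and deduce the same for $G_i/V_i$. The key structural tool is that $V_i = [V_{i-1}, G]$ while $G_i = [G_{i-1}, G]$; since $V_{i-1} \le G_{i-1}$ by Lemma~\ref{cl3}(1), there is a natural surjection from a quotient of $G_{i-1}/V_{i-1}$ onto $G_i/V_i$-type data via the commutator map. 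More precisely, the commutator map $(x, g) \mapsto [x, g]$ induces, modulo $V_i = [V_{i-1}, G]$, a map that factors through $G_{i-1}/V_{i-1} \times G/G'$ (using that $[G_{i-1}, G']$ and higher commutators already lie deep in the lower central series, hence in $V_i$ for the relevant range — this needs checking), and its image generates $G_i/V_i$.

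The main point to establish is that $G_i/V_i$ has exponent $p$ and is central in $G/V_i$ (abelian then follows, or one argues abelianness directly from $G_{i+1} \le V_i$, Lemma~\ref{cl3}(1), which gives $[G_i, G] \le V_i$, so $G_i/V_i$ is automatically central in $G/V_i$ and in particular abelian). So the real content is the exponent-$p$ claim. Here I would use the bilinearity of the commutator map modulo suitable terms: for $x \in G_{i-1}$ and $g \in G$, one has $[x^p, g] \equiv [x,g]^p \pmod{\gamma}$ where $\gamma$ is a product of higher commutators, and since $x^p \in V_{i-1}$ by the inductive hypothesis (elementary abelian $p$ in $G_{i-1}/V_{i-1}$), the left side lies in $[V_{i-1}, G] = V_i$. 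One must verify that the correction terms $\gamma$ also lie in $V_i$; these are iterated commutators of weight $\ge i$ involving at least two entries from $G$ and one from $G_{i-1}$, together with commutators pulling in $x$ repeatedly, so they sit in $G_{i+1}$ or in $[G_{i-1}, G'] \cdot G_{i+1}$-type subgroups, all contained in $G_{i+1} \le V_i$. Hence $[x,g]^p \in V_i$, and since such $[x,g]$ generate $G_i$ modulo $V_i$, we get that $G_i/V_i$ has exponent $p$.

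I expect the collection-formula bookkeeping — showing precisely that every correction term from expanding $[x^p,g]$ or from passing the $p$-th power through the commutator lands in $V_i$ — to be the main obstacle, though it is routine once one notes that every such term has lower-central weight at least $i+1$ and $G_{i+1} \le V_i$. An alternative, cleaner route avoiding explicit collection: pass to $\bar G = G/V_i$. Then $\bar G_{i} = G_i/V_i$ is central in $\bar G$, so $\bar G$ is nilpotent of class at most $i$, and moreover $\bar G_{i-1} = G_{i-1}/V_i$ is a class-$2$ (in fact, with central quotient) group whose quotient $\bar G_{i-1}/\bar G_i \cong G_{i-1}/V_{i-1}$ is elementary abelian $p$. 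In such a configuration the commutator induces a well-defined alternating bilinear-type map from $(G_{i-1}/V_{i-1}) \times (G/G')$ into $G_i/V_i$ whose image spans the target; since the left factor has exponent $p$, so does the image, giving exponent $p$ for $G_i/V_i$. This is the argument I would write up, citing Lemma~\ref{cl3} for the containments $G_{i+1} \le V_i \le G_i$ and Lemma~\ref{VZquo} for the base case and the prime.
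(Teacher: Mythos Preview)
Your approach is essentially the paper's: induction with Lemma~\ref{VZquo} as base case, pass to $G/V_i$, observe $G_i/V_i$ is central (hence abelian), and push the $p$th power through the commutator using $x^p \in V_{i-1}$. One simplification you are missing, which the paper exploits and which dissolves your collection-formula worries entirely: after setting $V_i = 1$, it is not merely $G_{i+1}$ but all of $V_{i-1}$ that becomes central in $G$ (since $[V_{i-1},G] = V_i = 1$), and $G_i \le V_{i-1}$, so $G_i$ itself is central. With $[x,w]$ central one has $[x^p,w] = [x,w]^p$ on the nose, no correction terms; and $x^p \in V_{i-1}$ is central, so $[x^p,w] = 1$. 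That is the whole inductive step. Also, a small slip in your cleaner route: $\bar G_{i-1}/\bar G_i \cong G_{i-1}/G_i$, not $G_{i-1}/V_{i-1}$; what you actually need (and have) is just $x^p \in V_{i-1}$ for $x \in G_{i-1}$.
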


\begin{proof}
We work by induction on $i$.  If $i = 1, 2$, then this is Lemma \ref
{VZquo}.  Thus, we may assume that $i \ge 3$.  Also, by the
inductive hypothesis, we have that $G_{i-1}/V_{i-1}$ is an
elementary abelian $p$-group.  Without loss of generality, we may
assume that $V_i = 1$. Notice that this implies that $V_{i-1}$ is
central in $G$.  We have $G_i \le V_{i-1}$, and so, $G_i$ is central
in $G$. As $G_i$ is abelian, it suffices to show that its generators
all have order $p$. Recall that $G_i$ is generated by elements of
the form $[x,w]$ where $x \in G_{i-1}$ and $w \in G$.  By inductive
hypothesis, we know that $x^p \in V_{i-1}$. Since $G_i$ and
$V_{i-1}$ are central in $G$, we have $[x,w]^p = [x^p,w] = 1$.  This
implies that the generators of $G_i$ all have order $p$, and the
result follows.
\end{proof}

We compute the index of an important centralizer in this next lemma.

\begin{lemma} \label{Fact 7}
Assume $V_2 < G_2$.  Suppose $a \in G \setminus V_1$ and $A/V_2 =
\cent {G/V_2}{aV_2}$. Then $\gpindex GA = \gpindex {G'}{V_2} = p^m$.
\end{lemma}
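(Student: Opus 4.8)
The plan is to work in the VZ-quotient $G/V_2$, where Lemma \ref{VZquo} tells us $\overline{G} := G/V_2$ is a VZ-group with center $\overline{V_1} = V_1/V_2$, and every nonlinear character vanishes off $\overline{V_1}$. Write $\overline{a} = aV_2$; since $a \notin V_1$, we have $\overline{a} \notin \gpcen{\overline{G}}$, so $A/V_2 = \cent{\overline{G}}{\overline{a}}$ is a proper subgroup of $\overline{G}$ containing $\overline{V_1}$. The index $\gpindex{G}{A} = \gpindex{\overline{G}}{\cent{\overline{G}}{\overline{a}}}$ is exactly the size of the conjugacy class of $\overline{a}$ in $\overline{G}$. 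So the whole statement reduces to showing that in a VZ-group, every element outside the center has conjugacy class of size $\gpindex{G'}{1}$ — equivalently, every noncentral element is a Camina element.

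First I would recall from Lemma \ref{facts} that $\overline{a}$ is a Camina element of $\overline{G}$ if and only if $\chi(\overline{a}) = 0$ for all nonlinear $\chi \in \irr{\overline{G}}$, if and only if $\card{\cent{\overline{G}}{\overline{a}}} = \gpindex{\overline{G}}{\overline{G}'}$. Since $\overline{G}$ is a VZ-group, every nonlinear irreducible character of $\overline{G}$ vanishes on $\overline{G} \setminus \gpcen{\overline{G}}$, and $\overline{a}$ lies outside $\gpcen{\overline{G}}$, so indeed $\chi(\overline{a}) = 0$ for all nonlinear $\chi$. Hence $\overline{a}$ is a Camina element of $\overline{G}$, and therefore $\card{\cent{\overline{G}}{\overline{a}}} = \gpindex{\overline{G}}{\overline{G}'}$, which gives $\gpindex{\overline{G}}{\cent{\overline{G}}{\overline{a}}} = \card{\overline{G}'} = \card{G'/V_2} = \gpindex{G'}{V_2} = p^m$.

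Translating back: $\gpindex{G}{A}$ equals $\gpindex{G/V_2}{A/V_2}$ since $V_2 \le A$, and the computation above shows this equals $\gpindex{G'}{V_2} = p^m$, using the notation fixed after Lemma \ref{VZquo}. The only points needing care are the identifications $\overline{G}' = G'/V_2$ (which holds because $V_2 = V_2(G) = [V_1, G] \le [G,G] = G'$, so $G'/V_2$ is exactly the derived subgroup of $G/V_2$) and the fact that $\cent{\overline{G}}{\overline{a}}$ genuinely contains $\overline{V_1} = \gpcen{\overline{G}}$ so that it makes sense as a subgroup $A/V_2$ with $V_2 \le A$. I do not expect any real obstacle here: once one invokes the VZ-structure of $G/V_2$ from Lemma \ref{VZquo} and the Camina-element characterization from Lemma \ref{facts}, the result is essentially immediate. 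The mild subtlety is simply keeping the bars straight and remembering that "vanishes off the center" is precisely what makes every noncentral element a Camina element.
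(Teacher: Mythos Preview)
Your proof is correct. Both you and the paper ultimately compute the size of the conjugacy class of $aV_2$ in $G/V_2$ and find it equals $\card{G'/V_2}$, but the routes differ slightly. The paper argues directly: the map $x \mapsto [a,x]V_2$ from $G$ to $G'/V_2$ is a homomorphism (because $G'/V_2 \le V_1/V_2 = \gpcen{G/V_2}$), its kernel is $A$, and it is onto because $(G,V_1)$ is a GCP and $a \notin V_1$; the first isomorphism theorem then gives $\gpindex GA = \gpindex{G'}{V_2}$. You instead invoke Lemma~\ref{VZquo} to say $G/V_2$ is a VZ-group, observe that all nonlinear characters vanish on the noncentral element $\overline a$, and then apply the character-theoretic equivalence in Lemma~\ref{facts} to conclude $\overline a$ is a Camina element of $\overline G$, whence $\gpindex{\overline G}{\cent{\overline G}{\overline a}} = \card{\overline G'}$. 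The paper's argument is a hair more self-contained (it uses only the GCP property and elementary commutator identities), while yours leans on the already-established VZ structure; in substance the two are very close, since the surjectivity of the paper's commutator map is exactly the content of ``$\overline a$ is a Camina element.''
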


\begin{proof}
Consider the map $x \mapsto [a,x]V_2$ from $G$ to $G'/V_2$.  We know
that $G' \le V_1$ and $V_1/V_2$ is central in $G/V_2$, so $G'/V_2$
is central in $G/V_2$.  Since $G'/V_2 \le \gpcen {G/V_2}$, it
follows that this map is a homomorphism, and it is easy to see that
its kernel is $A$. Since $a \not\in V_1$ and $(G,V_1)$ is a GCP,
this homomorphism will be onto, and the result is just the first
isomorphism theorem.
\end{proof}

For the rest of this section, we will consider the hypothesis that
$V_3 < G_3$.  Suppose that $G$ has nilpotence class $3$ and $V_1 =
Z_2 (G)$, then $V_2 \le \gpcen G$, and hence, $V_3 = [V_2,G] = 1 <
G_3$. Therefore, if we have a group where all the nonlinear
irreducible characters vanish off of the second center, then the
group $G$ will satisfy $V_3 < G_3$.

For the remainder of this section, we set $C/V_3 = \cent
{G/V_3}{G'/V_3}$. We now consider the elements in $G \setminus V_1$.
In this next lemma, we use the Jacobi-Witt identity which says that
if $G$ has nilpotence class $3$ and $w,x,y \in G$, then $[w,x,y]
[x,y,w] [y,w,x] = 1.$

\begin{lemma} \label{Fact 6}
Assume $V_3 < G_3$.  Suppose $b \in C \setminus V_1$ and let $B/V_2
= \cent {G/V_2}{bV_2}$.  Then $B \le C$.
\end{lemma}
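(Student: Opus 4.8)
The plan is to exploit the fact that $V_3 < G_3$ forces $G/V_3$ to have nilpotence class $3$ (Lemma \ref{cl3}), so the Jacobi--Witt identity is available in $G/V_3$, and to show that $[b,x] \in C$ for every $x \in B$; since $C/V_3$ is defined as the centralizer of $G'/V_3$, this amounts to showing $[[b,x],G] \le V_3$ for all $x \in B$, i.e.\ that the commutators $[b,x]$ with $x \in B$ centralize $G'$ modulo $V_3$.

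First I would set up the relevant subgroups modulo $V_3$ and record what is known. We have $G'/V_3 \le V_1/V_3$, and since $V_2/V_3$ is central in $G/V_3$ and $G_3/V_3 = [V_2,G]/V_3$ is also central, the group $G'/V_3$ has a fairly restricted commutator structure. Because $b \in C$, the element $bV_3$ centralizes $G'/V_3$, so $[b,G'] \le V_3$; also $B/V_2 = \cent{G/V_2}{bV_2}$ means $[b,B] \le V_2$, hence for $x \in B$ the commutator $[b,x]$ lands in $V_2$, and so $[b,x]V_3 \in V_2/V_3$, which is central in $G/V_3$. The goal is to upgrade membership of $[b,x]$ from $V_2$ to $C$.

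The key step is the Jacobi--Witt identity $[w,x,y][x,y,w][y,w,x] = 1$ applied in $G/V_3$ with the right choice of entries. Given $x \in B$ and an arbitrary $g \in G$, I want to compute $[[b,x],g]$ modulo $V_3$. Taking $w = b$, $x = x$, $y = g$ in the identity gives $[b,x,g][x,g,b][g,b,x] = 1$ in $G/V_3$. Now $[x,g] \in G' \le V_1$, and $[V_1,b] \le [V_1,C]$; more to the point, $[x,g,b] = [[x,g],b]$, and since $b$ centralizes $G'$ mod $V_3$ we get $[x,g,b] \in V_3$. Similarly $[g,b] = [b,g]^{-1}$, and $[b,g] \in G'$ because $C \le G$ forces... actually $[b,g]$ need only lie in $G'$, but then $[g,b,x] = [[b,g]^{-1},x]$; since $[b,g] \in G'$ and $x \in B$ — here I would need that $[G',B] \le V_3$ or argue directly that $[[b,g],x] \in V_3$. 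The cleanest route is: $[b,g] \in G' \le V_1$, and $[V_1, G] = V_2$, but we need it inside $V_3$; instead observe $[b,g]V_3 \in G'/V_3$, and one should show $x$ centralizes $G'/V_3$, or use that $[b,g] \in G'$ together with $x \in B \le C$ (if we already knew $B \le C$, which is what we want — so this must be circumvented). Thus from the identity we extract $[b,x,g] \equiv 1 \pmod{V_3}$ once the other two terms are shown to lie in $V_3$, giving $[[b,x],g] \in V_3$ for all $g \in G$, hence $[b,x]V_3$ centralizes $G/V_3$ and in particular $G'/V_3$, so $[b,x] \in C$ and therefore $B = \langle x \in B\rangle \cdot V_2 \le C$ (using $V_2 \le V_1 \le C$, which follows from $G' \le V_1$ and $[V_1, C]$ landing appropriately, or more simply $V_2 = G'$... no: $V_2 \le G'$, and $G' \le C$ trivially? — $G'$ need not centralize $G'$, but $G'/V_3$ does centralize $G'/V_3$ when class is $3$ since $[G',G',G] \le G_5 = V_3$-ish; actually $[G',G'] \le G_4 \le V_3$, so $G' \le C$).

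The main obstacle I expect is handling the term $[g,b,x] = [[b,g]^{-1}, x]$ without circular reasoning: we must show $x \in B$ centralizes $[b,g] \in G'$ modulo $V_3$ using only the hypotheses $x \in B$ (so $[x,b] \in V_2$) and $b \in C$. The trick should be a second application of Jacobi--Witt, this time with entries $b$, $g$, $x$ to express $[b,g,x]$ in terms of $[g,x,b]$ and $[x,b,g]$: here $[x,b] \in V_2$ so $[x,b,g] \in [V_2,G] = G_3 \le V_3$... wait, $G_3/V_3$ is central, not trivial, so $[V_2,G] = G_3 \not\le V_3$ in general — but $[x,b] \in V_2$ and additionally $[x,b]$ lies in the smaller set $\cent{G/V_2}{bV_2}$-related subgroup; one shows $[x,b,g] \in V_3$ because $[x,b] \in V_2$ and $[V_2, G] \le G_3$, and then one needs $G_3 \le V_3$, which is exactly the failing hypothesis. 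So instead: $[x,b] \in V_2$ means $[x,b]V_3 \in V_2/V_3 = \gpcen{G/V_3} \cap (V_2/V_3)$, which is central, hence $[x,b,g] = [[x,b],g] \in V_3$ since $V_2/V_3$ is central in $G/V_3$. That resolves it. Then $[g,x,b] = [[g,x],b]$ with $[g,x] \in G'$ and $b$ centralizing $G'$ mod $V_3$ gives $[g,x,b] \in V_3$, so $[b,g,x] \in V_3$, which is precisely what was needed to kill the stray term in the first identity. Assembling these two Jacobi--Witt applications completes the argument.
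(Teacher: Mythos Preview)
You have misidentified the target of the proof. You set out to show $[b,x] \in C$, and your entire two-step Jacobi--Witt argument is aimed at establishing $[[b,x],g] \in V_3$ for all $g \in G$. But this is already trivially true: by definition of $B$ you know $[b,x] \in V_2$, and since $[V_2,G] = V_3$ the coset $V_2/V_3$ is central in $G/V_3$. So your first Jacobi--Witt application recomputes something you already noted a few lines earlier. Moreover, the conclusion ``$[b,x] \in C$ and therefore $B \le C$'' is a non-sequitur: knowing that $[b,x]$ lies in $C$ says nothing about whether $x$ itself does.

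What must actually be shown is $x \in C$, i.e.\ $[y,x] \in V_3$ for every $y \in G'$. Your \emph{second} Jacobi--Witt application, taken in isolation, is exactly the paper's argument and already does this: working modulo $V_3$ with entries $b,g,x$ you correctly obtain $[g,x,b] \equiv 1$ (since $[g,x] \in G'$ and $b \in C$) and $[x,b,g] \equiv 1$ (since $[x,b] \in V_2$ and $V_2/V_3$ is central), whence $[b,g,x] \equiv 1$. The crucial step you never invoke is the GCP property: because $b \notin V_1$ and $(G,V_1)$ is a generalized Camina pair, the map $g \mapsto [b,g]$ is surjective onto $G'$ (Lemma~\ref{facts}). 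Hence $[b,g,x] \in V_3$ for all $g$ gives $[G',x] \le V_3$, i.e.\ $x \in C$. Discard the first Jacobi--Witt, keep the second, and add this surjectivity observation; that is the complete proof.
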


\begin{proof}
We may assume without loss of generality that $V_3 = 1$.  Under this
hypothesis, we have $C = \cent G{G'}$.  Also, we see that $G$ has
nilpotence class $3$ and $V_2$ is central in $G$. Fix $x \in B$, and
let $w \in G$ be arbitrary. Note that $B = \{ g \in G \mid [b,g] \in
V_2 \}$. Thus, $[b,x] \in V_2 \le Z_1$, so $[b,x,w] = 1$. Also,
$[x,w] \in G'$ and $b \in C$, so $[x,w,b] = 1$. Applying the
Jacobi-Witt identity, we have $[w,b,x] = 1$.  Since $b \not\in V_1$
and $(G,V_1)$ is a GCP, we know that $[w,b]$ runs through all of
$G'$ as $w$ runs through $G$. This implies that $x$ centralizes
$G'$, and hence, $x \in C$.
\end{proof}

The following lemma is important to understand the structure of $G$.

\begin{lemma} \label{Fact 8}
Assume $V_3 < G_3$.  Suppose $a \in G \setminus C$ and $A/V_2 =
\cent {G/V_2}{aV_2}$. Then $A \cap C = V_1$.
\end{lemma}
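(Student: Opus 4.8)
\textbf{Proof proposal for Lemma \ref{Fact 8}.}

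The plan is to reduce to the case $V_3 = 1$, exactly as in the proofs of Lemmas \ref{Fact 6} and \ref{Fact 7}, so that $G$ has nilpotence class $3$, $V_2 \le \gpcen G$, and $C = \cent G{G'}$. First I would establish the inclusion $V_1 \le A \cap C$. Since $G' = G_2 \le V_1$ and $V_1 \le C$ by hypothesis (indeed $V_1/V_3 \le \gpcen{G/V_3} \le \cent{G/V_3}{G'/V_3} = C/V_3$, using that $G'/V_3$ is central in $G/V_3$ because $G$ has class $3$), we get $V_1 \le C$. Also $V_1/V_2 \le \gpcen{G/V_2}$ by Lemma \ref{VZquo}, so $V_1$ centralizes $aV_2$ in $G/V_2$, giving $V_1 \le A$. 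Hence $V_1 \le A \cap C$.

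For the reverse inclusion, suppose $x \in A \cap C$ and I must show $x \in V_1$. Assume for contradiction that $x \notin V_1$. The key observation is that $x$, lying in both $A$ and $C$, is highly constrained: $x \in A$ means $[a,x] \in V_2 \le \gpcen G$, so $[a,x,w] = 1$ for all $w \in G$; and $x \in C = \cent G{G'}$ means $[x,w,b] = 1$ — wait, more to the point, $[x,w] \in G'$ is centralized by everything in $C$, but I want to use the Jacobi--Witt identity with the triple $(a, x, w)$ for arbitrary $w \in G$. Since $[a,x] \in Z_1$ we have $[a,x,w]=1$; since $x \in C = \cent G {G'}$ and $[w,a]\in G'$ we have $[w,a,x]=1$; hence by Jacobi--Witt $[x,w,a] = 1$ for all $w$. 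Because $x \notin V_1$ and $(G,V_1)$ is a GCP, Corollary \ref{GCP1}(4) says $[x,w]$ runs over all of $G'$ as $w$ runs over $G$; therefore $a$ centralizes $G'$, i.e. $a \in C$, contradicting $a \in G \setminus C$. This forces $x \in V_1$, completing the argument.

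The main obstacle I anticipate is organizing the Jacobi--Witt computation so that the two vanishing commutators are correctly matched to the two hypotheses ($x \in A$ giving $[a,x] \in V_2$, and $x \in C$ giving $x$ centralizes $G'$), and being careful about commutator conventions (left-normed brackets, the exact form $[w,x,y][x,y,w][y,w,x]=1$) so that the surviving term is precisely the one that, via the GCP property of $(G,V_1)$, forces $a \in C$. One should also double-check that the reduction $V_3 = 1$ is legitimate here: by Lemma \ref{cl3}(3), $V_i(G/V_3) = V_i/V_3$ for $i \le 3$, and $C/V_3$, $A/V_2$ pass to the quotient appropriately, so proving $(A \cap C)/V_3 = V_1/V_3$ in $G/V_3$ is equivalent to the original statement. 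Once the reduction and the bracket bookkeeping are pinned down, the rest is immediate from the GCP hypothesis.
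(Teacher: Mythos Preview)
Your main argument --- the Jacobi--Witt computation showing that any $x \in (A \cap C) \setminus V_1$ would force $a \in C$ --- is correct and is essentially the paper's proof with Lemma~\ref{Fact 6} unpacked inline. The paper instead picks $b \in (A\cap C)\setminus V_1$, cites Lemma~\ref{Fact 6} to get $B \le C$ where $B/V_2 = \cent{G/V_2}{bV_2}$, and then observes $a \in B$ because $b \in A$ gives $[b,a]\in V_2$; the contradiction $a \in C$ follows. Your direct computation is exactly the specialization of that lemma's proof to the single element $a$.

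There is, however, a slip in your justification of $V_1 \le C$. The claim $V_1/V_3 \le \gpcen{G/V_3}$ is false: $[V_1,G] = V_2$ and $V_3 < G_3 \le V_2$, so $V_1/V_3$ is not central in $G/V_3$. (The further remark that $G'/V_3$ is central in $G/V_3$ is likewise false, since $[G',G] = G_3 > V_3$.) The paper proves $V_1 \le C$ via the Three Subgroups Lemma: after reducing to $V_3 = 1$, one has $[G,V_1,G] = [V_2,G] = 1$ and $[V_1,G,G] = 1$, whence $[G',V_1] = [G,G,V_1] = 1$. With this correction in place your argument is complete.
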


\begin{proof}
Without loss of generality, we may assume that $V_3 = 1$, and hence,
$C = \cent G{G'}$. We first prove that $V_1 \le C$. To see this
consider $[G',V_1] = [G,G,V_1]$. Observe that $[G,V_1,G] \le [V_2,G]
= 1$ and similarly, $[V_1,G,G] = 1$. Hence, by the Three Subgroups
Lemma, we have $[G',V_1] = 1$. This implies that $V_1 \le C$.

Since $V_2 = [V_1,G]$, it follows that $V_1 \le A$, so we have $V_1
\le A \cap C$. If $V_1 < A \cap C$, then there exists an element $b
\in (A \cap C) \setminus V_1$, and we set $B/V_2 = \cent
{G/V_2}{bV_2}$. By Lemma \ref{Fact 6}, we know that $B \le C$. Now,
$[a,b] \in V_2$ since $b \in A$. This implies that $[b,a] \in V_2$,
and so, $a \in B$. Thus, $a \in C$ which is a contradiction, and so,
$A \cap C = V_1$.
\end{proof}

In several of the next lemmas, we will assume that $\card {G_3} =
p$. We will later see that for many of these results, this
hypothesis can be removed.  The first result determines some
conjugacy classes.  Notice that we do not need to assume anything
regarding $V_1$ for this lemma.  In fact, this next Lemma does not
impact $V_1$ at all.

\begin{lemma} \label{Fact 9}
Assume $G$ is nilpotent and $\card {G_3} = p$.  If $x \in G'
\setminus \gpcen G$, then ${\rm cl} (x) = xG_3$.
\end{lemma}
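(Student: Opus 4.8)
The plan is to show that the commutator map $g \mapsto [x,g]$ is a surjective homomorphism from $G$ onto $G_3$ with kernel $\cent Gx$, which immediately gives $[G:\cent Gx] = \card{G_3} = p$ and pins down the class ${\rm cl}(x)$.

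First I would establish that $G_3 \le \gpcen G$. Since $G$ is nilpotent and $\card{G_3} = p$, in particular $G_3 \ne 1$, consider $G_4 = [G_3,G]$, a normal subgroup of $G$ contained in $G_3$. If $G_4 = G_3$, then iterating the equality $G_3 = [G_3,G]$ forces $G_3 \le G_k$ for every $k$, so $G_3 = 1$ by nilpotence, a contradiction. Since $G_3$ has order $p$, the only remaining possibility is $G_4 = 1$; that is, $[G_3,G] = 1$, so $G_3 \le \gpcen G$.

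Next, because $x \in G' = G_2$, for every $g \in G$ we have $[x,g] \in [G_2,G] = G_3$, so $x^g = x[x,g] \in xG_3$ and hence ${\rm cl}(x) \subseteq xG_3$. Now define $\varphi \colon G \to G_3$ by $\varphi(g) = [x,g]$. Using the identity $[x,gh] = [x,h]\,[x,g]^h$ together with $[x,g] \in G_3 \le \gpcen G$ (so the conjugation by $h$ is trivial and the two factors commute), one gets $\varphi(gh) = \varphi(g)\varphi(h)$; thus $\varphi$ is a homomorphism with kernel exactly $\cent Gx$. Its image is a subgroup of $G_3$, and since $\card{G_3} = p$ this image is either trivial or all of $G_3$.

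Finally, since $x \notin \gpcen G$ there is some $g$ with $[x,g] \ne 1$, so $\varphi$ is nontrivial and its image is all of $G_3$. Therefore ${\rm cl}(x) = \{\, x[x,g] : g \in G \,\} = x\,\varphi(G) = xG_3$, as desired. I do not expect a serious obstacle here; the only step requiring any care is the reduction establishing that $G_3$ is central, which is precisely what makes $\varphi$ into a homomorphism, and this uses nilpotence of $G$ in an essential way.
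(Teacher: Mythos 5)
Your proof is correct, and it takes a genuinely different route from the paper's. You first observe that nilpotence together with $\card {G_3} = p$ forces $G_4 = [G_3,G] = 1$, so $G_3 \le \gpcen G$, and then exploit centrality to make $g \mapsto [x,g]$ a homomorphism of $G$ onto a nontrivial (hence, by primality, full) subgroup of $G_3$; surjectivity immediately gives ${\rm cl}(x) = xG_3$ as a whole coset. The paper never makes $G_3$ central explicit: it only uses that $G'/G_3$ is central in $G/G_3$ to get ${\rm cl}(x) \subseteq xG_3$, hence $\card {{\rm cl}(x)} \le p$, and then it writes $G = P \times Q$ with $P$ a $p$-group and $Q$ an abelian-on-$G'$ complement ($Q_3 = 1$, so $Q$ centralizes $G'$ and $Q \le \cent Gx$), which shows the class size divides $\card P$ and so is a nontrivial $p$-power, forcing it to equal $p$ and the containment to be equality. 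Your argument buys a cleaner, more self-contained proof that avoids the coprime decomposition and the divisibility/counting step, getting the exact coset directly from surjectivity of the commutator map; the paper's argument is a size comparison that leans on the standard structure of nilpotent groups and keeps the commutator manipulations to a minimum. Both use nilpotence essentially (yours for the centrality of $G_3$, the paper's for the direct factorization), and both correctly conclude equality rather than mere containment.
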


\begin{proof}
Because $G$ is nilpotent, $G = P \times Q$ where $P$ is a $p$-group
and $Q$ is a $p'$-group.  Hence, $G' = P' \times Q'$, and as $\card
{G_3} = p$, we have $G_3 = P_3$, so $Q_3 = 1$.  In particular, $Q'
\le \gpcen Q$, so $Q$ centralizes $G'$.  Observe that $G'/G_3$ is
central in $G/G_3$, and thus, it follows that ${\rm cl} (x)
\subseteq xG_3$.  We deduce that $\card {{\rm cl} (x)} \le p$.
Recall that $x \in G'$, which implies that $Q \le \cent Gx$. Now,
$\card {{\rm cl} (x)} = \gpindex G{\cent Gx}$ divides $\gpindex GQ =
\card P$. Therefore, $\card {{\rm cl} (x)}$ is either $1$ or $p$.
Because $x$ is not central, we must have $\card {{\rm cl} (x)} = p =
\card {xG_3}$, and we conclude that ${\rm cl} (x) = xG_3$.
\end{proof}

We now can strengthen the relationship between $m$ and $n$ in
$\gpindex G{V_1} = p^{2n}$ and $\gpindex {G'}{V_2} = p^m$ when
$\card {G_3} = p$.  In particular, we will show that $m = n$.  Also,
this shows that we have examples where $V_1 < C$ whenever $\gpindex
{G_3}{V_3} = p$.

\begin{lemma} \label{Fact 11}
Assume $V_3 = 1$ and $\card {G_3} = p$.  Then $\gpindex G{V_1} =
\gpindex {G'}{V_2}^2$ (i.e. $m = n$), $\gpindex GC = \gpindex
{G'}{V_2} = p^n$, and $V_2 = G' \cap \gpcen G$.
\end{lemma}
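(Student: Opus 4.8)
The plan is to combine a commutator-pairing computation bounding $\gpindex GC$ with the index identities already proved in Lemmas~\ref{Fact 7} and~\ref{Fact 8}. Throughout, $p$ is the prime of Lemma~\ref{VZquo} (since $V_3 < G_3$, Lemma~\ref{Fact 4} applied with $i=3$ shows $G_3 = G_3/V_3$ is a $p$-group, which is why this agrees with the $p$ in $\card{G_3}=p$), and we write $\gpindex G{V_1}=p^{2n}$, $\gpindex{G'}{V_2}=p^m$ with $m\le n$. First I would assemble standing facts. From $V_3 = 1 < G_3$ and Lemma~\ref{cl3} we get $V_2 < G_2 = G'$, and since $[V_2,G]=V_3=1$ we also get $V_2 \le \gpcen G$; hence $V_2 \le G'\cap\gpcen G$. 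Moreover $G_3 = [G',G] \le [V_1,G] = V_2 \le \gpcen G$. As in the proof of Lemma~\ref{Fact 8}, the Three Subgroups Lemma gives $[G',V_1]=1$, so $V_1 \le \cent G{G'} = C$; and $C < G$, since $C=G$ would force $G_3=1$.

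The crux is the commutator map $G \times G' \to G_3$, $(g,x)\mapsto [x,g]$. Because $[x,g]\in[G',G]=G_3\le\gpcen G$, a short computation shows it is additive in each variable ($[xy,g]=[x,g][y,g]$ and $[x,gh]=[x,g][x,h]$), and it vanishes whenever $g\in C$ or $x\in G'\cap\gpcen G$. So it induces an injective homomorphism $G/C \hookrightarrow {\rm Hom}\bigl(G'/(G'\cap\gpcen G),\,G_3\bigr)$, whose kernel is $\cent G{G'}/C = 1$. Since $\card{G_3}=p$ we have $\card{{\rm Hom}(A,G_3)}\le\card A$ for every finite abelian $A$, so, together with $V_2 \le G'\cap\gpcen G$,
$$\gpindex GC \;\le\; \gpindex{G'}{G'\cap\gpcen G}\;\le\;\gpindex{G'}{V_2}\;=\;p^m.$$

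Now I would run the counting. Choose $a\in G\setminus C$; then $a\notin V_1$ since $V_1\le C$, so Lemma~\ref{Fact 7} gives $\gpindex GA = p^m$ for $A/V_2 = \cent{G/V_2}{aV_2}$, while Lemma~\ref{Fact 8} gives $A\cap C = V_1$. Hence $p^{2n}=\gpindex G{V_1}=\gpindex G{A\cap C}\le\gpindex GA\cdot\gpindex GC\le p^m\cdot p^m$, which forces $n\le m$; with $m\le n$ this gives $m=n$ and $\gpindex G{V_1}=\gpindex{G'}{V_2}^2$. Dividing the same inequality by $\gpindex GA = p^m = p^n$ yields $\gpindex GC\ge p^n$, and combined with $\gpindex GC\le p^m=p^n$ we get $\gpindex GC = p^n = \gpindex{G'}{V_2}$. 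Feeding $\gpindex GC = p^n$ back into the displayed bound forces $\gpindex{G'}{G'\cap\gpcen G}=\gpindex{G'}{V_2}$, so $\gpindex{G'\cap\gpcen G}{V_2}=1$, i.e.\ $V_2 = G'\cap\gpcen G$, which is the third assertion.

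I expect the only non-routine point to be verifying that the commutator map is genuinely biadditive — this is exactly where $G_3\le\gpcen G$ is used — so that the injection $G/C\hookrightarrow{\rm Hom}(G'/(G'\cap\gpcen G),G_3)$, and hence the bound $\gpindex GC\le p^m$, are legitimate. Once that bound is in hand, Lemmas~\ref{Fact 7} and~\ref{Fact 8} supply all three conclusions through elementary index arithmetic.
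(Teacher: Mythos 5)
Your proof is correct, and its second half is the same squeeze the paper uses: pick $a \in G \setminus C$, invoke Lemma \ref{Fact 7} for $\gpindex GA = p^m$ and Lemma \ref{Fact 8} for $A \cap C = V_1$, and force equality in $p^{2n} = \gpindex G{V_1} \le \gpindex GA \, \gpindex GC \le p^{2m}$, which simultaneously yields $m=n$, $\gpindex GC = p^n$, and (by equality in $\gpindex GC \le \gpindex {G'}{G'\cap \gpcen G} \le \gpindex {G'}{V_2}$) the identity $V_2 = G' \cap \gpcen G$. Where you genuinely differ is in how the key bound $\gpindex GC \le \gpindex {G'}{G' \cap \gpcen G} \le p^m$ is obtained. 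The paper derives it from Lemma \ref{Fact 9}: writing $G' = \langle x_1, \dots, x_k, G' \cap \gpcen G\rangle$, each $\cent G{x_i}$ has index $p$ because ${\rm cl}(x_i) = x_i G_3$, and $C = \bigcap_i \cent G{x_i}$ has index at most $p^k$. You instead use that $G_3 \le \gpcen G$ and $\card{G_3} = p$ to make the commutator map a biadditive pairing inducing an injection $G/C \hookrightarrow {\rm Hom}\bigl(G'/(G'\cap \gpcen G),\, G_3\bigr)$, whose target has order at most $\gpindex {G'}{G'\cap\gpcen G}$; the biadditivity and well-definedness checks you flag all go through precisely because $G_3$ is central, and the Hom-counting is harmless even without remarking that $G'$ is abelian here (it is, since the class is $3$), because maps to $G_3 \cong \mathbb{Z}/p$ factor through the abelianization. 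Your route buys independence from Lemma \ref{Fact 9}: you never need the $P \times Q$ decomposition or exact class sizes, since a homomorphism into a group of order $p$ kills the $p'$-part automatically, and within this paper Lemma \ref{Fact 9} is used only in this proof, so your argument could replace it; the paper's route, in exchange, records the slightly stronger structural fact ${\rm cl}(x) = xG_3$ for noncentral $x \in G'$. The small counting variation (you bound $\gpindex G{A\cap C} \le \gpindex GA\,\gpindex GC$ directly, the paper writes $\gpindex A{V_1} = \gpindex {AC}C \le \gpindex GC$) is the same inequality in different clothing.
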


\begin{proof}
We know that $V_3 = 1$, so $V_2$ is central in $G$.  As $G_3 \le
V_2$, we have that $G_3$ is central.  Hence, $G$ is nilpotent with
nilpotence class $3$. Also, $C = \cent G{G'}$.  Let $Z = \gpcen G$.
It follows that $V_2 \le G' \cap Z$, and since $G'/V_2$ is a
$p$-group, we can set $\gpindex {G'}{G' \cap Z} = p^k$. Because $V_2
\le G' \cap Z$, we have $p^k \le \gpindex {G'}{V_2} = p^m$.  Now,
$G'/{V_2}$ is elementary abelian, so we can find $x_1, \dots, x_k
\in G' \setminus Z$ so that $G' = \langle x_1, \dots, x_k, G' \cap Z
\rangle$. It follows that $C = \bigcap\limits_{i=1}^k \cent G{x_i}$.
Thus, we have
$$
\gpindex GC = \gpindex G{\bigcap_{i=1}^k \cent G{x_i}} \le \prod_{i=1}^k
\gpindex G{\cent G{x_i}} = p^k,
$$
where the last equality follows from Lemma \ref{Fact 9}.

Since $G$ has nilpotence class $3$, $G'$ is not central in $G$, so
$C < G$. Thus, there is an element $a \in G \setminus C$, and let
$A/V_2 = \cent {G/V_2}{aV_2}$. By Lemma \ref{Fact 8}, we have $V_1 =
A \cap C$, so $\gpindex A{V_1} = \gpindex {AC}C$. Recall that
$\gpindex GA = p^m$ via Lemma \ref{Fact 7}.  Using Lemma \ref{VZquo}
and the previous paragraph, we know that
$$
p^{2m} \le p^{2n} = \gpindex G{V_1} = \gpindex GA \gpindex A{V_1} =
p^m \gpindex {AC}C \le p^m \gpindex GC \le p^mp^k \le p^{2m}.
$$
We must have equality throughout.  This implies that $m = n = k$ and
$\gpindex GC = p^n$. We obtain $\gpindex {G'}{G' \cap Z} = \gpindex
{G'}{V_2}$, and since $V_2 \le G' \cap Z$, this implies that $V_2 =
G' \cap Z$.
\end{proof}

To remove the hypothesis that $\card {G_3} = p$, we need the key
observation that if $V_3 \le N < G_3$, then $V_1 (G/N) = V_1/N$ by
Lemma \ref{quots}, and it follows that $V_2 (G/N) = [V_1/N,G/N] =
[V_1,G]N/N = V_2/N$. These follow because $G_3/V_3$ is central in
$G/V_3$, so that $N$ is normal. With these observations, we will be
able to use an inductive hypothesis on $G/N$. We now obtain
conclusion (1) in Theorem \ref{main3}.

\begin{lemma}\label{Fact 17}
Assume $V_3 < G_3$.  Then $\gpindex G{V_1} = \gpindex {G'}{V_2}^2$
(i.e. $m = n$).% and $|G:C| = \gpindex {G'}{V_2} = p^n$.
\end{lemma}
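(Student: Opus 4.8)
The statement is the $\card{G_3}=p$-free version of Lemma~\ref{Fact 11}, and the natural approach is exactly the one flagged just before it: reduce to that special case via a quotient. First I would pick $N$ with $V_3 \le N < G_3$ and $\gpindex{G_3}N = p$; such an $N$ exists because $G_3/V_3$ is a nontrivial elementary abelian $p$-group by Lemma~\ref{Fact 4}, hence has a subgroup of index $p$ which (being contained in $G_3/V_3 \le \gpcen{G/V_3}$) is normal in $G/V_3$, so its preimage $N$ is normal in $G$. Then by the observations recorded just before the lemma, $V_1(G/N) = V_1/N$ and $V_2(G/N) = V_2/N$, and of course $V_3(G/N) = V_3(G/N)$ contains $N/N = 1$ while $G_3(G/N) = G_3/N$ has order $p$, so $V_3(G/N) < G_3(G/N)$ with $\card{(G/N)_3} = p$.

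**Main step.** Apply Lemma~\ref{Fact 11} to $\bar G = G/N$: it gives $\gpindex{\bar G}{V_1(\bar G)} = \gpindex{\bar G'}{V_2(\bar G)}^2$. Translating through the identifications above, the left side is $\gpindex{G/N}{V_1/N} = \gpindex G{V_1}$ and the right side is $\gpindex{G'N/N}{V_2/N}^2 = \gpindex{G'}{V_2N}^2 \cdot (\text{correction})$ — here I need to be slightly careful, since $G'N/N \cong G'/(G' \cap N)$, so $\gpindex{\bar G'}{V_2(\bar G)} = \gpindex{G'N}{V_2} / \gpindex N{N\cap G'}$, or more cleanly $\gpindex{G'N/N}{V_2N/N} = \gpindex{G'N}{V_2N} = \gpindex{G'}{V_2}/\gpindex{G'\cap N}{V_2 \cap (G'\cap N)}$. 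But $N < G_3 \le V_2$ forces $N \le V_2$, so $V_2 N = V_2$ and $\gpindex{\bar G'}{V_2(\bar G)} = \gpindex{G'/N}{V_2/N} = \gpindex{G'}{V_2}$ exactly. Since $N \le V_1$ as well ($N < G_3 \le G' \le V_1$), we get $\gpindex{\bar G}{V_1(\bar G)} = \gpindex{G}{V_1}$ exactly. So the conclusion $\gpindex G{V_1} = \gpindex{G'}{V_2}^2$ transfers verbatim, i.e.\ $m = n$.

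**Anticipated obstacle.** The only real subtlety is making sure Lemma~\ref{Fact 11} actually applies to $\bar G$ — it requires $V_3(\bar G) = 1$, and what I have is $N/N = 1 \le V_3(\bar G)$, but I want equality. Here I would note that since $N < G_3$ and $\gpindex{G_3}N = p = \card{\bar G_3}$, and $\bar G_3 \le V_2(\bar G) = V_2/N$ which is central in $\bar G$ (as $V_2/V_3$ is central in $G/V_3$, hence $V_2/N$ is central in $G/N$), we have $V_3(\bar G) = [V_2(\bar G), \bar G] = [V_2/N, G/N] = V_3 N/N = N/N = 1$ using $V_3 \le N$. So the hypotheses of Lemma~\ref{Fact 11} are met. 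A secondary point worth a sentence: one should confirm $a \in G \setminus V_1$ exists, i.e.\ $V_1 < G$ — but this follows from $V_2 < G_2$ (itself a consequence of $V_3 < G_3$ via Lemma~\ref{cl3}(2)) and Lemma~\ref{VZquo}, which already gives $G/V_1$ nontrivial. With these checks in place the proof is a two-line quotient argument; I expect no genuine difficulty beyond the bookkeeping of which subgroups sit inside $N$.
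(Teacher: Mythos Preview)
Your proposal is correct and follows essentially the same route as the paper: choose $N$ with $V_3 \le N < G_3$ and $\gpindex{G_3}N = p$, invoke the recorded identifications $V_i(G/N) = V_i/N$, and apply Lemma~\ref{Fact 11} to $G/N$. Your write-up is in fact more careful than the paper's (you explicitly verify $V_3(G/N)=1$ and that $N \le G_3 \le V_2 \le G'$ so the index computations transfer cleanly), but the argument is the same.
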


\begin{proof}
When $|G_3| = p$, this is Lemma \ref{Fact 11}.  Thus, we may assume
that $|G_3| > p$.  We can find $V_3 \le N < G_3$ so that $\gpindex
{G_3}N = p$.  We apply Lemma \ref{Fact 11} to $G/N$.  Thus, we have
$\gpindex {G'/N}{V_2/N} = \gpindex G{V_2} = p^n$ and $\gpindex
G{V_1} = \gpindex {G/N}{V_1/N} = p^{2n}$.
%Note that $C/N = \cent{G/N}{G'/N}$, and applying Lemma \ref{Fact 11},
%$\gpindex GC = \gpindex {G/N}{C/N} = p^n$.
%This last statement is incorrect.
\end{proof}

We now determine the structure of $C$ in the general case.  This is
conclusion (3) of Theorem \ref{main3}.

\begin{lemma}\label{Fact 17a}
Assume $V_3 < G_3$.  Then either $\gpindex GC = \gpindex C{V_1} =
\gpindex {G'}{V_2} = p^n$ or $C = V_1$.
\end{lemma}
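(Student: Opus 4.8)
The plan is to reduce to the case $|G_3| = p$ handled in Lemma \ref{Fact 11}, as was done for Lemma \ref{Fact 17}, and then bootstrap from the quotient $G/N$ to $G$ itself. First I would observe that when $|G_3| = p$ (more precisely, when $V_3 = 1$ and $|G_3| = p$), Lemma \ref{Fact 11} already gives $\gpindex GC = \gpindex {G'}{V_2} = p^n$, so one of the two alternatives holds; note that in this subcase we even have $V_1 < C$ (since $C < G$ and $\gpindex GC = p^n$ while $\gpindex G{V_1} = p^{2n}$ forces $\gpindex C{V_1} = p^n$ too). So assume $|G_3| > p$ and pick $V_3 \le N < G_3$ with $\gpindex {G_3}N = p$; since $G_3/V_3$ is central in $G/V_3$, $N$ is normal in $G$, and by the observations preceding Lemma \ref{Fact 17} we have $V_i(G/N) = V_i/N$ for $i = 1, 2$ and also $G_3/N = G_3(G/N)$ is nontrivial, so $V_3(G/N) = 1 < G_3(G/N)$ and the hypotheses of the lemma apply to $G/N$.

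Next I would carefully relate $C$ and $C/V_3$ to the corresponding object for $G/N$. Write $\bar G = G/N$ and let $\bar C / \bar V_3 = \cent{\bar G / \bar V_3}{\bar G' / \bar V_3}$ in the sense of the lemma applied to $\bar G$; since $\bar V_3 = 1$ this is just $\bar C = \cent{\bar G}{\bar G'}$. The subtlety is that $C$ (defined via $V_3$) and the preimage in $G$ of $\cent{G/N}{G'N/N}$ need not coincide, because passing to $G/N$ can only enlarge a centralizer. So I would set $D/N = \cent{G/N}{G'/N}$ and establish $C \le D$ directly. This should follow from the Three Subgroups Lemma: for $c \in C$ we have $[c, G'] \le V_3 \le N$, hence $c N \in D/N$. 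Then apply Lemma \ref{Fact 11} to $\bar G = G/N$ to get that either $\gpindex{\bar G}{D/N} = \gpindex{\bar G'/N}{V_2/N} = p^n$ with $V_1/N < D/N$, or — and here is the wrinkle — I need to check that Lemma \ref{Fact 11}'s conclusion is unconditional once $V_3(\bar G) = 1$ and $|G_3(\bar G)| = p$. Reading Lemma \ref{Fact 11}, its conclusion $\gpindex{\bar G}{\bar C} = \gpindex{\bar G'}{V_2(\bar G)} = p^n$ is stated without the dichotomy, so in fact $\bar C = D$ and $\gpindex G D = \gpindex{G'}{V_2} = p^n$, together with $V_1 < D$ and $\gpindex D{V_1} = p^n$.

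It then remains to transfer this back to $C$: we have $C \le D \le G$ with $\gpindex G D = p^n = \gpindex G{V_1}^{1/2}$, and also $V_1 \le C$ (shown as in Lemma \ref{Fact 8}, via $[G', V_1] = 1$ from the Three Subgroups Lemma applied inside $G/V_3$). So $V_1 \le C \le D$. If $C = D$ we get the first alternative. If $C < D$, then since $V_1 \le C$ and $\gpindex D{V_1} = p^n$, $C/V_1$ is a proper subgroup of an elementary abelian (or at least a $p$-group) of order $p^n$... the main obstacle is precisely here: closing the gap between $C = V_1$ and $C = D$, i.e. showing there is no intermediate possibility. I expect this forces a further use of Lemma \ref{Fact 8} and Lemma \ref{Fact 6}: if $V_1 < C$, pick $b \in C \setminus V_1$, form $B/V_2 = \cent{G/V_2}{bV_2}$, and run the argument of Lemma \ref{Fact 8} (with $b$ in the role there) to derive constraints; combined with $\gpindex GA = p^m = p^n$ (Lemma \ref{Fact 7}) and $A \cap C = V_1$ for $a \notin C$, an index count like the one in Lemma \ref{Fact 11} should squeeze $\gpindex GC$ to exactly $p^n$, giving $C = D$. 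The hard part will be verifying that this squeezing genuinely excludes $V_1 < C < D$ rather than merely bounding $\gpindex GC$ from above; I would expect to need $V_1 < C \Rightarrow C' = V_2$ (conclusion (5), presumably proved next) or an independent commutator argument to pin it down.
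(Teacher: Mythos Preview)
Your reduction to a quotient $G/N$ with $\gpindex{G_3}{N}=p$, the definition of $D/N=\cent{G/N}{G'/N}$, the verification that $C\le D$ and $V_1\le C$, and the use of Lemma~\ref{Fact 11} to obtain $\gpindex GD=\gpindex{G'}{V_2}=p^n$ all match the paper exactly. The case $C=D$ is handled the same way. The issue is only your handling of the case $C<D$, which you flag as the ``main obstacle'' and leave unresolved.

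In fact that step closes immediately, and you already have the right lemma in hand---you just cite the wrong one. If $V_1<C$, pick $b\in C\setminus V_1$ and form $B/V_2=\cent{G/V_2}{bV_2}$. Lemma~\ref{Fact 6} (not Lemma~\ref{Fact 8}, which requires $b\notin C$) applies directly in $G$ and gives $B\le C$. Lemma~\ref{Fact 7} together with $m=n$ from Lemma~\ref{Fact 17} gives $\gpindex GB=p^n$, hence $\gpindex GC\le p^n$. Since $C\le D$ and $\gpindex GD=p^n$, this forces $C=D$, contradicting $C<D$. So $C<D$ implies $C=V_1$, and there is no intermediate possibility; no appeal to conclusion~(5) or any further commutator identity is needed.

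The paper argues the $C<D$ case slightly differently: it picks $b\in D\setminus C$. Then Lemma~\ref{Fact 8} (applicable because $b\notin C$) gives $B\cap C=V_1$; Lemma~\ref{Fact 6} applied in the quotient $G/N$ (where $D/N$ plays the role of $C$, and $bN\in D/N\setminus V_1/N$) gives $B\le D$; and $\gpindex GB=p^n=\gpindex GD$ forces $B=D$. Hence $C=D\cap C=B\cap C=V_1$ directly. Your (corrected) route and the paper's are equally short; yours stays in $G$ and argues by contradiction, while the paper descends once more to $G/N$ and concludes $C=V_1$ without a contradiction.
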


\begin{proof}
We may assume that $V_3 = 1$.  We know that $G_3 > 1$, so $G$ has
nilpotence class $3$.  This implies that $G'$ is not central in $G$,
so $C < G$.  Thus, we can apply Lemma \ref{Fact 8} to see that $V_1
\le C$.  Also, since $G_3 > 1$, we can find $N \le G_3$ so that
$\gpindex {G_3}N = p$. Let $D/N = \cent {G/N}{G'/N}$. By Lemma
\ref{Fact 11}, we know that $\gpindex GD = \gpindex {G'}{V_2}$, and
by Lemma \ref{Fact 17}, we have $\gpindex G{V_1} = \gpindex
{G'}{V_2}^2 = \gpindex GD^2$.  It is easy to see that $C \le D$.  If
$C = D$, then we have $\gpindex GC = \gpindex C{V_1} = \gpindex
{G'}{V_2}$ as desired.

We now assume that $C < D$.  Thus, we can find $b \in D \setminus
C$.  Set $B/V_2 = \cent {G/V_2}{bV_2}$.  Applying Lemma \ref{Fact
8}, we have that $B \cap C = V_1$.  Since $V_1 \le C$, we have $b
\in D \setminus V_1$.  Thus, we may apply Lemma \ref{Fact 6} to see
that $B \le D$ and Lemma \ref{Fact 7} to see that $\gpindex GB =
\gpindex {G'}{V_2}$.  We have already seen that $\gpindex GD =
\gpindex {G'}{V_2}$.  It follows that $B = D$.  Hence, $C = D \cap C
= B \cap C = V_1$, and this implies the result.
\end{proof}

We now show that $V_2 = G' \cap Z$ where $Z/V_3 = \gpcen {G/V_3}$
when $V_3 < G_3$. This is conclusion (2) of Theorem \ref{main3}.

\begin{lemma}\label{Fact 18}
Assume $V_3 < G_3$.  If $Z/V_3 = \gpcen {G/V_3}$, then $V_2 = G'
\cap Z$.
\end{lemma}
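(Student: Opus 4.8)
The plan is to reduce to the case $\card{G_3} = p$ already handled in Lemma \ref{Fact 11}, using the same inductive device employed in Lemma \ref{Fact 17}. First I would pass to the quotient $G/V_3$, so without loss of generality $V_3 = 1$; then $V_2 = [V_1,G]$ is central in $G$, $G_3 \le V_2$ is central, $G$ has nilpotence class $3$, and $Z = \gpcen G$. The inclusion $V_2 \le G' \cap Z$ is immediate (it is recorded in the proof of Lemma \ref{Fact 11}), so the whole content is the reverse inclusion $G' \cap Z \le V_2$.

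If $\card{G_3} = p$ this is exactly Lemma \ref{Fact 11}, which gives $V_2 = G' \cap Z$. So assume $\card{G_3} > p$ and choose a normal subgroup $N$ with $1 < N < G_3$ and $\gpindex{G_3}{N} = p$; such an $N$ exists because $G_3$ is central, hence every subgroup of it is normal in $G$. By the key observation quoted before Lemma \ref{Fact 17}, we have $V_1(G/N) = V_1/N$ and $V_2(G/N) = V_2/N$, and $G_3(G/N) = G_3/N$ has order $p$, while $V_3(G/N) = V_3/N = 1 < G_3/N$, so the hypothesis $V_3 < G_3$ is inherited. Applying Lemma \ref{Fact 11} to $G/N$ yields $V_2/N = (G'/N) \cap \gpcen{G/N}$. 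The subtle point — and the step I expect to be the main obstacle — is to convert this back to a statement about $G$: I need that the preimage in $G$ of $\gpcen{G/N}$ intersected with $G'$ equals $G' \cap Z \cdot N$, or more precisely that $(G' \cap Z)N/N \subseteq (G'/N) \cap \gpcen{G/N}$ forces $G' \cap Z \subseteq V_2$. The containment $(G' \cap Z)N/N \subseteq (G'/N)\cap\gpcen{G/N}$ is clear since $Z \le Z_2(G/N)$-type issues do not arise: an element of $Z$ maps into $\gpcen{G/N}$. Hence $(G' \cap Z)N \subseteq V_2$ (pulling back $V_2/N$), and since $N \le G_3 \le V_2$ already, we conclude $G' \cap Z \subseteq V_2$.

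Combining with the trivial inclusion $V_2 \le G' \cap Z$ gives $V_2 = G' \cap Z$, completing the case $\card{G_3} > p$ and hence the lemma. The only place care is required is checking that the correspondence theorem applied to $N \trianglelefteq G$ really does identify the preimage of $\gpcen{G/N}$ correctly and that $N \le V_2$; both are routine given $G_3 \le V_2$ and $N \le G_3$, but they are exactly what makes the reduction legitimate rather than circular.
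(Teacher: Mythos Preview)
Your argument is correct and is essentially the paper's own proof: both reduce to the case $\card{G_3/V_3}=p$ by passing to $G/N$ for $V_3\le N<G_3$ with $\gpindex{G_3}{N}=p$, invoke Lemma~\ref{Fact 11} there, and pull back via $Z/N\le\gpcen{G/N}$ together with $N\le G_3\le V_2$. The only cosmetic slip is writing ``$V_3(G/N)=V_3/N$'' after assuming $V_3=1<N$; you mean $V_3(G/N)=V_3N/N=1$, which is what your computation actually uses.
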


\begin{proof}
Again, we can find a subgroup $N$ so that $V_3 \le N < G_3$ and
$\gpindex {G_3}N = p$.  We know $V_2 \le G'$ and $[V_2,G] = V_3$, so
$V_2 \le Z$.  It follows that $V_2 \le G' \cap Z$.  By Lemma
\ref{Fact 11}, we have $V_2/N = G'/N \cap \gpcen{G/N}$. Note that
$Z/N \le \gpcen {G/N}$. We now have
$$
\frac {G' \cap Z}N = \frac {G'}N \cap \frac {Z}N \le \frac {G'}N
\cap \gpcen {\frac GN} = \frac {V_2}N \le \frac {G' \cap Z}N.
$$
Since we must have equality throughout, we obtain $V_2/N = (G' \cap
Z)/N$, and therefore, $G' \cap Z = V_2$.
\end{proof}

We obtain an equality of centralizers when $\card {G_3} = p$.

\begin{lemma}\label{Fact 12}
Assume $V_3 = 1$ and $\card {G_3} = p$.  If $b \in C \setminus V_1$,
then $C/V_2 = \cent {G/V_2}{bV_2}$.
\end{lemma}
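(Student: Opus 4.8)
The plan is to prove the two inclusions $C/V_2 \subseteq \cent{G/V_2}{bV_2}$ and $\cent{G/V_2}{bV_2} \subseteq C/V_2$ separately, the second being an immediate citation. First I would record the usual reductions forced by the hypotheses: since $V_3 = 1$, the subgroup $V_2 = [V_1,G]$ is central in $G$; since $G_3 \le V_2$ and $\card{G_3} = p > 1$, the group $G$ has nilpotence class exactly $3$; and $C = \cent G{G'}$ (as in the proofs of Lemmas \ref{Fact 6}, \ref{Fact 8}, and \ref{Fact 11}). Write $B/V_2 = \cent{G/V_2}{bV_2}$, so that $B = \{\, g \in G \mid [b,g] \in V_2 \,\}$. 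Because $V_3 = 1 < G_3$ and $b \in C \setminus V_1$, Lemma \ref{Fact 6} applies directly and gives $B \le C$, which is one of the two inclusions.

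For the reverse inclusion $C \le B$, fix $g \in C$; I must show $[b,g] \in V_2$. The key step is the Jacobi--Witt identity applied to $b$, $g$, and an arbitrary $h \in G$: $[b,g,h]\,[g,h,b]\,[h,b,g] = 1$. Here $[g,h] \in G'$ and $b \in C = \cent G{G'}$, so the factor $[g,h,b]$ is trivial; likewise $[h,b] \in G'$ and $g \in C$, so $[h,b,g]$ is trivial. Hence $[b,g,h] = 1$ for every $h \in G$, i.e.\ $[b,g] \in \gpcen G$. Since also $[b,g] \in G'$, we get $[b,g] \in G' \cap \gpcen G$. Now Lemma \ref{Fact 11} (valid because $V_3 = 1$ and $\card{G_3} = p$) yields $G' \cap \gpcen G = V_2$, so $[b,g] \in V_2$ and therefore $g \in B$. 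This gives $C \le B$, and together with $B \le C$ we conclude $B = C$, that is, $C/V_2 = \cent{G/V_2}{bV_2}$.

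I do not expect a serious obstacle: the argument is short once one observes that both "off-diagonal" terms in the Jacobi--Witt identity vanish because $b$ and $g$ each centralize $G'$. The one point to watch is that the hypothesis $b \notin V_1$ is genuinely used — otherwise $bV_2$ would lie in $\gpcen{G/V_2}$ and the centralizer could be all of $G/V_2$ — and it enters precisely through the appeal to Lemma \ref{Fact 6} for the inclusion $B \le C$; the inclusion $C \le B$, by contrast, needs only $b \in C$. It is also worth verifying carefully that $V_2 \le C$ so that "$B/V_2$" and "$C/V_2$" make sense, but this follows from $[V_2, G'] \le [V_2, G] = V_3 = 1$.
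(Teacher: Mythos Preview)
Your argument is correct, but it takes a different route from the paper's proof. The paper argues by index comparison: writing $B/V_2 = \cent{G/V_2}{bV_2}$, Lemma~\ref{Fact 7} (applicable since $b \notin V_1$) gives $\gpindex GB = \gpindex{G'}{V_2} = p^m$, Lemma~\ref{Fact 11} gives $m = n$ and $\gpindex GC = p^n$, and Lemma~\ref{Fact 6} gives $B \le C$; equality of indices then forces $B = C$. You instead prove both inclusions directly, using Lemma~\ref{Fact 6} for $B \le C$ and the Jacobi--Witt identity together with the conclusion $V_2 = G' \cap \gpcen G$ of Lemma~\ref{Fact 11} for $C \le B$. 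The paper's approach is slightly shorter and reuses the index computations already on hand; your approach is more self-contained for the reverse inclusion and makes explicit \emph{why} elements of $C$ commute with $b$ modulo $V_2$ (both centralize $G'$, so the Jacobi--Witt identity collapses). Your closing remarks on where $b \notin V_1$ is actually used and on $V_2 \le C$ are accurate.
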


\begin{proof}
Let $B/V_2 = \cent {G/V_2}{bV_2}$.  By Lemma \ref{Fact 7}, we have
$\gpindex GB = p^n$.  On the other hand, we proved in Lemma
\ref{Fact 11} that $\gpindex GC = p^n$.  Finally, in Lemma \ref{Fact
6}, we showed that $B \le C$.  Hence, $B = C$, and the result is
proved.
\end{proof}

We find that $(C,V_1)$ is a GCP when $V_3 = 1$ and $\card {G_3} =
p$.

\begin{lemma}\label{Fact 13}
Assume $V_3 = 1$ and $\card {G_3} = p$.  Then $C' = V_2$, $C$ has
nilpotence class $2$, and $(C,V_1)$ is a GCP.
\end{lemma}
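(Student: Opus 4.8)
The plan is to reduce everything to the single claim that $[b,C] = V_2$ for every $b \in C \setminus V_1$; once that is in hand, all three conclusions fall out in a line or two. So first I would record the easy structural facts. Since $V_3 = 1$, Lemma~\ref{cl3} gives $G_3 \le V_2$, and $[V_2,G] = V_3 = 1$, so $V_2 \le \gpcen{G}$; because $\card{G_3} = p > 1$ this makes $1 < V_2$ and $G_4 = [G_3,G] = 1$, so $G$ has nilpotence class exactly $3$, $C = \cent{G}{G'}$, and $V_2 \le \gpcen{G} \cap C \le \gpcen{C}$. Also $V_1 \le C$, as is shown inside the proof of Lemma~\ref{Fact 8} via the Three Subgroups Lemma. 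By Lemma~\ref{Fact 11}, $\gpindex{G}{C} = \gpindex{G'}{V_2} = p^n$, and since $V_3 < G_3$ forces $V_2 < G'$ by Lemma~\ref{cl3}, we have $n \ge 1$, hence $V_1 < C$ and elements $b \in C \setminus V_1$ exist.

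Next I would show $C' \le V_2$. If $c_1 \in C \setminus V_1$ then Lemma~\ref{Fact 12} says $C/V_2 = \cent{G/V_2}{c_1 V_2}$, so $[c_1,c_2] \in V_2$ for all $c_2 \in C$; and if $c_1 \in V_1$ then $[c_1,c_2] \in [V_1,G] = V_2$ regardless. Hence every commutator of two elements of $C$ lies in $V_2$, so $C' \le V_2$.

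Then comes the key step. Fix $b \in C \setminus V_1$. From $C/V_2 = \cent{G/V_2}{bV_2}$ (Lemma~\ref{Fact 12}) I get $\cent{G}{b} \le C$, hence $\cent{C}{b} = \cent{G}{b}$; and since $(G,V_1)$ is a GCP (Lemma~\ref{V(G)}) and $b \notin V_1$, the element $b$ is a Camina element of $G$, so $\card{\cent{G}{b}} = \gpindex{G}{G'}$ by Lemma~\ref{facts}. Because $V_2 \le \gpcen{G}$, the map $y \mapsto [b,y]$ is a homomorphism $C \to V_2$ with kernel $\cent{C}{b}$ and image $[b,C]$, so $\card{[b,C]} = \gpindex{C}{\cent{C}{b}} = \card{C}/\gpindex{G}{G'} = \card{G'}/\gpindex{G}{C} = \card{G'}/p^n = \card{V_2}$. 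Since $[b,C] \le V_2$, this gives $[b,C] = V_2$.

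Finally, as $[b,C] \le C'$, the key step gives $V_2 \le C' \le V_2$, so $C' = V_2$; since $1 < V_2 \le \gpcen{C}$, the group $C$ has nilpotence class exactly $2$. Moreover, for each $b \in C \setminus V_1$ and each $z \in C' = V_2 = [b,C]$ there is $y \in C$ with $[b,y] = z$, so Lemma~\ref{facts} applied inside $C$ shows $b$ is a Camina element of $C$; hence every element of $C \setminus V_1$ is a Camina element of $C$, i.e. $(C,V_1)$ is a GCP (with $V_1$ normal in $C$ since it is characteristic in $G$). I expect the only real work to be the index computation $\card{[b,C]} = \card{V_2}$: the two points to watch there are that it is Lemma~\ref{Fact 12} that forces $\cent{G}{b} \le C$, so centralizing $b$ in $C$ and in $G$ give the same subgroup, and that it is the centrality of $V_2$ that turns $y \mapsto [b,y]$ into a homomorphism, so $\card{[b,C]}$ can be evaluated as a group index. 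Everything else is bookkeeping.
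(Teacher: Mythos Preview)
Your proof is correct, but both key steps take a different route from the paper's.

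For $C' \le V_2$, the paper uses the Three Subgroups Lemma: from $[C,G,C] \le [G',C] = 1$ and $[G,C,C] = 1$ it obtains $[C',G] = 1$, so $C' \le \gpcen G \cap G' = V_2$ by Lemma~\ref{Fact 11}. Your case split on whether $c_1 \in V_1$, invoking Lemma~\ref{Fact 12} in the nontrivial case, is equally valid and perhaps more elementary. For $V_2 \le [b,C]$, the paper avoids any index computation: given $y \in V_2$, the Camina property of $b$ produces $g \in G$ with $[b,g] = y$, and then $[b,g] \in V_2$ forces $g \in C$ by the characterization $C = \{x \in G \mid [b,x] \in V_2\}$ from Lemma~\ref{Fact 12}, so $y \in [b,C]$. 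This one-line membership argument uses Lemma~\ref{Fact 12} for its full content (the \emph{equality} $C/V_2 = \cent{G/V_2}{bV_2}$), whereas your counting only extracts the containment $\cent Gb \le C$ and then does arithmetic. Your route is sound and self-contained; the paper's is shorter and more conceptual.
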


\begin{proof}
First, notice that $C' \le G' \le \gpcen C$, so $C$ has nilpotence
class $2$.  Consider $[C',G] = [C,C,G]$. Observe that $[C,G,C] \le
[G',C] = 1$, and similarly, $[G,C,C] = 1$. By the Three Subgroups
Lemma, we deduce that $[C',G] = 1$, and thus, $C' \le Z_1$.  We also
have $C' \le G'$, and hence, $C' \le Z_1 \cap G'$. By Lemma
\ref{Fact 11}, this implies that $C' \le V_2$.

By Lemma \ref{Fact 11}, we have $\gpindex G{V_1} = p^{2n}$ and
$\gpindex GC = p^n$, so $\gpindex C{V_1} = p^n$.  Hence, we can find
an element $b \in C \setminus V_1$.  Using Lemma \ref{Fact 12}, we
have $C = \{ x \in G \mid [b,x] \in V_2 \}$.  Since $(G,V_1)$ is a
GCP and $b \not\in V_1$, for each $y \in V_2$, there is an element
$g \in G$ so that $[b,g] = y$.  We have just seen that this implies
that $g \in C$, and $V_2 \le [b,C] \le C'$.  We conclude that $C' =
V_2$ and $(C,V_1)$ is a GCP.
\end{proof}

We get the general version of Lemma \ref{Fact 13}.  If $V_1 = C$,
then obviously, $V (C) \le V_1$.  Hence, to prove conclusion (4) of
Theorem \ref{main3}, it suffices to prove that $V(C) \le V_1$ when
$V_1 < C$.  In the next lemma, we prove that $(C,V_1)$ is a GCP when
$V_1 < C$, and from Lemma \ref{V(G)} implies that $V (C) \le V (G)$.
We also obtain conclusion (5) of Theorem \ref{main3}.

\begin{lemma}\label{Fact 20}
Assume $V_3 < G_3$.  If $V_1 < C$, then $C' = V_2$, $C/V_3$ has
nilpotence class $2$, and $(C,V_1)$ is a GCP.
\end{lemma}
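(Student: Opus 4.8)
The plan is to reduce to the case $\card{G_3} = p$ already handled in Lemma \ref{Fact 13}, by passing to the quotient $G/N$ for a suitable $N$ with $V_3 \le N < G_3$ and $\gpindex{G_3}{N} = p$. As in the proofs of Lemmas \ref{Fact 17}, \ref{Fact 17a}, and \ref{Fact 18}, I would first reduce to $V_3 = 1$, so that $G$ has nilpotence class $3$, $C = \cent G{G'}$, and by Lemma \ref{Fact 8} (applicable since $C < G$) we have $V_1 \le C$. Choose $N \le G_3$ with $\gpindex{G_3}{N} = p$, and set $D/N = \cent{G/N}{G'/N}$. By the key observations recorded before Lemma \ref{Fact 17}, $V_i(G/N) = V_i/N$ for $i = 1,2$, and $V_3(G/N) = 1$ since $G_3/N$ has order $p$ and is central. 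So Lemma \ref{Fact 13} applies to $G/N$, giving $(D/N)' = V_2/N$, $D/N$ of nilpotence class $2$, and $(D/N, V_1/N)$ a GCP.

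Next I would transfer these conclusions from $D/N$ back to $C$. The inclusion $C \le D$ is immediate (if $x$ centralizes $G'$ then $xN$ centralizes $G'/N$). The case analysis from Lemma \ref{Fact 17a} is exactly what I need: either $C = D$, or $C = V_1$. Since we are assuming $V_1 < C$, Lemma \ref{Fact 17a} forces $C = D$, so that $C/N$ has nilpotence class $2$ and $(C/N, V_1/N)$ is a GCP. From $(C/N)' = V_2/N$ we get $C' N = V_2$; since $C' \le V_2$ will follow from the Three Subgroups Lemma argument (exactly as in Lemma \ref{Fact 13}: $[C',G] = [C,C,G] = 1$ because $[C,G,C] \le [G',C] = 1$ and $[G,C,C] = 1$, so $C' \le \gpcen G \cap G' = V_2$ by Lemma \ref{Fact 18}), we conclude $C' = V_2$. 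Likewise $C/V_3 = C$ has nilpotence class $2$ directly from $C' \le G' \le \gpcen C$, so that part needs no quotient argument at all.

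The remaining and most delicate point is that $(C, V_1)$ itself is a GCP, not merely $(C/N, V_1/N)$. Here I would argue directly rather than lift: since $V_1 < C$, pick $b \in C \setminus V_1$. Because $(G, V_1)$ is a GCP and $b \notin V_1$, for each $y \in V_2$ there is $g \in G$ with $[b,g] = y$; I must show such $g$ can be taken in $C$. The natural approach is to use Lemma \ref{Fact 6}: set $B/V_2 = \cent{G/V_2}{bV_2}$, so $B \le C$ by Lemma \ref{Fact 6}, and $B = \{x \in G \mid [b,x] \in V_2\}$. Then any $g$ with $[b,g] = y \in V_2$ automatically lies in $B \le C$, giving $V_2 \le [b,C] \le C'$, hence $C' = V_2$ and $(C, V_1)$ is a GCP by Corollary \ref{GCP1}. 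I expect the main obstacle to be verifying that this works for \emph{every} $b \in C \setminus V_1$ as required by the GCP definition (Corollary \ref{GCP1} asks that every element of $C \setminus V_1$ be a Camina element of $C$), and checking that $C' = V_2$ is exactly the image of the commutator map $[b, \cdot]$ restricted to $C$ — which is where the nilpotence class $2$ of $C$ and the centrality of $V_2$ in $C$ are essential, since then $x \mapsto [b,x]$ is a homomorphism $C \to C'$ with kernel $B \cap C = B$ and image $V_2$.
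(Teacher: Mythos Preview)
Your argument is correct and largely parallels the paper's, with one genuine simplification and one small slip.

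The direct argument in your final paragraph --- pick $b \in C \setminus V_1$, use that $(G,V_1)$ is a GCP to find $g$ with $[b,g]=y$ for any $y\in V_2$, then observe $g\in B\le C$ via Lemma~\ref{Fact 6} --- is exactly the paper's method for obtaining $V_2\le C'$ and the GCP conclusion. Your worry about ``every $b$'' is unfounded: the argument uses only $b\in C\setminus V_1$, so it applies uniformly to all such $b$, which is precisely Corollary~\ref{GCP1}(4).

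Where you differ from the paper is in establishing $C'\le V_2$. The paper takes subgroups $N_1,\dots,N_r$ with $V_3\le N_i<G_3$, $\gpindex{G_3}{N_i}=p$, $\bigcap_i N_i=V_3$, applies Lemma~\ref{Fact 13} to each $G/N_i$ to get $C'N_i=V_2$, and intersects to obtain $C'V_3\le V_2$. You instead run the Three Subgroups argument directly (modulo $V_3$) and invoke Lemma~\ref{Fact 18} to conclude $C'\le Z\cap G'=V_2$. Your route is cleaner and makes the detour through a single quotient $G/N$ unnecessary for this step.

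The slip: in your second paragraph you write that $C'N=V_2$ together with $C'\le V_2$ gives $C'=V_2$. That implication is false in general (e.g.\ it would need $N\le C'$). What actually closes the gap is the inclusion $V_2\le C'$ from your third paragraph, so the conclusion $C'=V_2$ is fine --- just note that the single-quotient step $C'N=V_2$ is redundant once you have both the Three Subgroups bound and the direct argument. Also, be careful that your reduction to $V_3=1$ only literally proves the statements modulo $V_3$; your paragraph~3 argument should be (and can be) read in the original $G$, which is what you indicate by ``argue directly rather than lift''.
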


\begin{proof}
First, notice that $C' \le G'$ and $G'/V_3 \le \gpcen {C/V_3}$, so
$C/V_3$ has nilpotence class $2$.  Consider a subgroup $V_3 \le N <
G_3$ so that $\gpindex {G_3}N = p$.  We know that $\gpindex
{G/N}{\cent {G/N}{G'/N}} = p^n$ via Lemma \ref{Fact 11}.  In light
of Lemma \ref{Fact 17a}, we have $\gpindex GC = p^n$.  Since $C/N
\le \cent {G/N}{G'/N}$, this implies that $C/N = \cent {G/N}{G'/N}$.
By Lemma \ref{Fact 13}, we have $C'N/N = V_2/N$, and hence, $C'N =
V_2$. {}From Lemma \ref{Fact 4}, we know that $G_3/V_3$ is a
elementary abelian $p$-group, so we can find subgroups $N_1, \dots,
N_r$ in $G_3$ containing $V_3$ so that $\gpindex {G_3}{N_i} = p$ for
each $i$ and $\bigcap\limits_{i=1}^r N_i = V_3$. We have $C'N_i =
V_2$ for each $i$. This implies that
$$
V_2 = \bigcap_{i=1}^r (C'N_i) \ge C' \left( \bigcap_{i=1}^r N_i
\right) = C' V_3,
$$
and so $C' \le V_2$.

We know via Lemma \ref{Fact 17} that $\gpindex G{V_1} = p^{2n}$.
Since $\gpindex GC = p^n$, we have $V_1 < C$.  Hence, we can find $b
\in C \setminus V_1$, and let $B/V_2 = \cent {G/V_2}{bV_2}$.  By
Lemma \ref{Fact 6}, we have $B \le C$.  Since $b \not\in V_1$ and
$(G,V_1)$ is a GCP, there is for each $y \in V_2$ an element $g \in
G$ so that $[b,g] = y$. Notice that this implies that $g \in B \le
C$.  We now have $V_2 \le [b,C] \le C'$.  We conclude that $V_2 =
C'$ and $(C,V_1)$ is a GCP.
\end{proof}

Finally, we obtain Conclusion (6) of Theorem \ref{main3}.

\begin{lemma}\label{Fact 20a}
Assume $V_3 < G_3$.  If $V_1 < C$ and $[V_1,C] < V_2$, then
$C/[V_1,C]$ is a VZ-group and $n$ is even.
\end{lemma}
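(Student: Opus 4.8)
The plan is to reduce to the case $V_3 = 1$, pass to the quotient $\bar G = G/[V_1,C]$, and show that $\overline C = C/[V_1,C]$ becomes a VZ-group; the squareness of $\gpindex GC$ (conclusion (6)) will then follow from Lemma \ref{main1} applied to $\overline C$, together with the equality $\gpindex GC = \gpindex {G'}{V_2} = p^n$ from Lemma \ref{Fact 17a} (which is available since $V_1 < C$). First I would observe that by Lemma \ref{Fact 20}, since $V_1 < C$, we have $C' = V_2$ and $(C,V_1)$ is a GCP, and $C/V_3$ has nilpotence class $2$. The hypothesis $[V_1,C] < V_2 = C'$ says that $[V_1,C]$ is a proper normal subgroup of $C$ (it is normal in $G$ since both $V_1$ and $C$ are). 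Modding out by $[V_1,C]$, in $\overline C = C/[V_1,C]$ the subgroup $\overline{V_1}$ becomes central, because $[\overline{V_1},\overline C] = \overline{[V_1,C]} = 1$. So $\overline{V_1} \le \gpcen{\overline C}$.

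Next I would show $\gpcen{\overline C} = \overline{V_1}$, i.e. that $V_1$ is exactly the preimage in $C$ of the center of $\overline C$. One inclusion is the previous paragraph. For the reverse: since $(C,V_1)$ is a GCP and $C$ is nonabelian (as $C' = V_2 > V_3 \ge 1$ when $V_3 = 1$... more precisely $C' = V_2 > [V_1,C]$ so $\overline C$ is nonabelian), every element $c \in C \setminus V_1$ is a Camina element for $C$, so by Lemma \ref{facts} its conjugacy class in $C$ is $cC' = cV_2$, which has size $\gpindex C{V_1} = p^n > 1$ since $\gpindex C{V_1} = \gpindex GC = p^n$. Such $c$ cannot map into the center of any quotient $\overline C$ unless $C' \le [V_1,C]$, which is false. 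Hence no element of $C \setminus V_1$ becomes central mod $[V_1,C]$, giving $\gpcen{\overline C} = \overline{V_1}$. Then $V(\overline C)$: by Lemma \ref{V(G)}, $(C, V(C))$ is a GCP and $V(C) \le V_1$ since $(C,V_1)$ is a GCP; combined with $\gpcen C \le V(C)$ and the fact that $V_1 \le \gpcen C \cdot (\text{stuff})$... actually cleaner: $(C,V_1)$ a GCP gives $V(C) \le V_1$, and passing to $\overline C$ via Lemma \ref{basics}(3) (note $[V_1,C] \le V_1 = $ the GCP subgroup) shows $(\overline C, \overline{V_1})$ is a GCP, so $V(\overline C) \le \overline{V_1} = \gpcen{\overline C} \le V(\overline C)$. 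Therefore $V(\overline C) = \gpcen{\overline C}$, i.e. $\overline C$ is a VZ-group.

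Finally, apply Lemma \ref{main1} to the VZ-group $\overline C$: there are positive integers $s \le t$ with $|\overline C{}'| = p^s$ and $\gpindex{\overline C}{\gpcen{\overline C}} = p^{2t}$. But $\gpindex{\overline C}{\gpcen{\overline C}} = \gpindex C{V_1} = p^n$ (using $\gpindex GC = \gpindex C{V_1} = p^n$ from Lemma \ref{Fact 17a}, valid since $V_1 < C$), so $p^n = p^{2t}$ and $n = 2t$ is even. For conclusion (6) of Theorem \ref{main3}, $\gpindex GC = p^n = p^{2t}$ is a square. I expect the main obstacle to be the bookkeeping around which subgroup is normal in which and confirming $\overline C$ is genuinely nonabelian (so that Lemma \ref{main1} applies and $\gpcen{\overline C}$ is proper) — this rests precisely on the hypothesis $[V_1,C] < V_2 = C'$, so that $\overline C{}' = V_2/[V_1,C] > 1$. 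The other delicate point is verifying that $V_1$ is the full preimage of $\gpcen{\overline C}$ rather than just contained in it; the GCP/Camina-element argument via Lemma \ref{facts} handles this, since a Camina element of $C$ has a conjugacy class equal to its coset of $C'$ and hence cannot survive as a central element modulo any proper subgroup of $C'$.
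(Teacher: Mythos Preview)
Your proposal is correct and follows essentially the same approach as the paper. Both arguments use Lemma \ref{Fact 20} to obtain that $(C,V_1)$ is a GCP with $C'=V_2$, establish that $V_1/[V_1,C] = \gpcen{C/[V_1,C]}$, deduce that $C/[V_1,C]$ is a VZ-group, and then read off that $\gpindex C{V_1}=p^n$ must be a square via Lemma \ref{main1} (equivalently Lemma \ref{VZquo}) together with Lemma \ref{Fact 17a}. The paper phrases the middle step as proving $V(C)=V_1$ in $C$ itself (via the chain $\gpcen{C/[V_1,C]} \le V(C)/[V_1,C] \le V_1/[V_1,C] \le \gpcen{C/[V_1,C]}$) and then invoking Lemma \ref{VZquo} for $C$, whereas you work directly in the quotient $\overline C$ and show $V(\overline C)=\gpcen{\overline C}$; this is only a cosmetic difference. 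One harmless slip: the size of the conjugacy class $cC'=cV_2$ in $C$ is $|V_2|$, not $\gpindex C{V_1}$ --- but all your argument actually needs is that its image in $\overline C$ is $\bar c\,\overline{V_2}$ of size $|V_2/[V_1,C]|>1$, which is exactly the point you make.
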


\begin{proof}
By Lemma \ref{Fact 20}, we know that $(C,V_1)$ is a GCP.  It follows
that $V (C) \le V_1$.  {}From Lemma \ref{basics}, $\gpcen
{C/[V_1,C]} \le V(C)/[V_1,C] \le V_1/[V_1,C] \le \gpcen
{C/[V_1,C]}$.  We deduce that $V(C) = V_1$. Now, $V_2 (C) = [V_1,C]
< V_2 = C'$ using Lemma \ref{Fact 20}. Applying Lemma \ref{VZquo},
$C/[V_1,C]$ is a VZ-group. Since $\gpindex C{V_1} = p^n$ via Lemma
\ref{Fact 17a}, we deduce that $n$ is even.
\end{proof}

\section{Examples}

We now present some examples to demonstrate the phenomenon we have
mentioned.  Many of the examples we present were found using the
small groups library in MAGMA (see \cite{MAGMA}) or GAP (see
\cite{GAP}).  For such groups, we will include their identification
in the small groups library.  We should note that the examples we
present are in no way unique and that one can find many similar
examples by exploring the small groups libraries.  We would expect
that it is possible to find constructions that work for any prime
$p$ for the groups in many of these examples.  All the computations
in this section were done using MAGMA or GAP, and so, we shall not
explicitly state this.

We begin with an example that shows that $V (G)$ is not necessarily
proper in $G$ when $G$ is solvable (nilpotent or not nilpotent). The
easiest way to construct such an example is as follows. Let $H$ and
$K$ be any pair of nonabelian groups (possibly isomorphic; they may
or may not be nilpotent), and let $G = H \times K$. We can find
nonlinear characters $\chi, \psi \in \irr G$ so that $H \le \ker
{\chi}$ and $K \le \ker {\psi}$. Notice that this implies that $H
\le V (\chi)$ and $K \le V (\psi)$, and hence, $G = HK \le V (G)$,
and so, $G = V (G)$.

Looking at the small groups library, we see that $V(G) < G$ for
every nonabelian $2$-group whose order is $8$, $16$, or $32$.
Suppose $G = A \times B$ where each of $A$ and $B$ are either
quaternion or dihedral of order $8$, then $G = V(G)$, so there exist
groups of order $64$ for which the vanishing-off subgroup is not
proper.

We now present an example of a group $G$ of order $64$ for which $G
= V(G)$, but $G$ is not a direct product of nonabelian groups.  We
set
$$
G = \langle a,b,c,d,e,f | a^2= b^2 = c^2 = d^2 = e^2 = f^2 = 1, b^a
= bd, c^a = ce, c^b = cf \rangle
$$
where we follow the conventions of a polycyclic representation in
GAP or MAGMA that commutators of generators that are not given must
be trivial.  (This is SmallGroup (64,73).)  We computed that $V (G)
= G$. We also computed the character degrees of $G$ which were $\{
1, 2 \}$, so $G$ cannot be the direct product of nonabelian groups.

We next include an example where $G = V(G)$ and $G$ has Camina
elements. Again, $G$ has order $64$.  We set
$$
G = \langle a,b,c,d,e,f | a^2= d, c^2 = f, b^2 = d^2 = e^2 = f^2 =
1, b^a = bc, c^a = ce, c^b = cf, d^b= def \rangle
$$
and we have $G = V(G)$. (This is SmallGroup (64,8).)  Using MAGMA,
one can see that $b$ and $bd$ are Camina elements of $G$.

We now present some examples of $p$-groups $G$ where $\gpindex
G{V(G)}$ is not a square.  The first example is the dihedral (or
semi-dihedral) group of order $2^n$ with $n \ge 4$. In this case, $V
(G)$ is the cyclic subgroup of order $2^{n-1}$ which of course has
index $2$ in $G$.   %Proof here?

Next, we present an example of a group of order $81$ with a
vanishing-off subgroup of index $3$.  In this example, we have
$$
G = \langle a,b,c,d | a^3 = b^3 = c^3 = d^3 = 1, b^a = bc, c^a = cd
\rangle
$$
and $V (G) = \langle b, c, d \rangle$ which has index $3$.  (This is
SmallGroup (81,7).)

We also present an example of a group of order $256$ where $\gpindex
G{V (G)} = 8$.  Also, this is an example where $V (G) < G$ and $V
(G)$ contains a Camina element for $G$.  In this case, we have
$$
G = \langle a,b,c,d,e,f,g,h | a^2 = g, c^2 = de, d^2 = e, f^2 = h,
b^2 = e^2 = g^2 = h^2 = 1, b^a = be, $$ $$c^a = cf, c^b = cef, d^a =
def, d^b = de, e^a = eh, f^b = fh, g^b = gh, g^c = gh \rangle.
$$
(This is SmallGroup (256,16349).)  We compute that $V (G) = \langle
d, e, f, g, h \rangle$, so $\gpindex G{V (G)} = 8$.  We should also
note that while every element in $G \setminus V (G)$ is a Camina
element for $G$, this is an example where not all the Camina
elements for $G$ lie in $G \setminus V(G)$.  In particular, $V (G)$
will itself contain a Camina element for $G$. We identify $dg$ as a
Camina element for $G$ and $dg \in V (G)$.

We next present an example of a group $G$ with vanishing height $3$
where $\gpindex {G'}{V_2 (G)}$ is not a square.  In this case, we
have $\card G = 32$.  Take
$$
G = \langle a,b,c,d,e | a^2 = d, b^2 = c^2 = d^2 = e^2 = 1, b^a =
bc, c^a = ce, d^b = de \rangle.
$$ (This is SmallGroup (32,6).)  In this case, $V (G) = \langle c,
d, e \rangle$ so $V (G)$ has index $4$.  Also, $G' = \langle c, e
\rangle$ and $V_2 (G) = \langle e \rangle$, so $\gpindex {G'}{V_2
(G)} = 2$.  Finally, to see that $G$ has vanishing height $3$, we
note that $G_3 = \langle e \rangle$ and $V_3 (G) = 1$.

This next example we believe is instructive.  It is an example where
$\gpindex G{V(G)} = \gpindex {G_3}{V_3 (G)} = 4$ but $\gpindex
{G'}{V_2 (G)} = 2$.  We also deduce that $C = V_1 (G)$.  In the
examples we have looked at, it seems that $C = V_1 (G)$ occurs
exactly when $\gpindex G{V(G)} = \gpindex {G_3}{V_3 (G)}$, but we
have not been able to prove this.  In this case, we have $\card G =
128$, and we take
$$
G = \langle a,b,c,d,e,f,g | a^2 = d, b^2 = e, c^2 = d^2 = e^2 = f^2
= g^2 = 1, b^a = bc, c^a = cf, c^b = cg,
$$
$$
d^b = df, e^a = eg \rangle.
$$
(This is SmallGroup (128,36).)  We deduce that $V (G) = \langle
c,d,e,f,g \rangle = \cent G{G'} = C$. We also have that $G' =
\langle c,f,g \rangle$ and $V_2 (G) = \langle f,g \rangle$.
Finally, $G_3 = \langle f,g \rangle$ and $V_3 (G) = 1$.

We now present an example with vanishing height $4$.  This example
will also have nilpotence class $4$, and $Z_3 (G) = V(G)$.  We have
a group $G$ with $\card G = 128$. We take
$$
G = \langle a,b,c,d,e,f,g | a^2 = e, c^2 = g, b^2 = d^2 = e^2 = f^2
= g^2 = 1, b^a = bd, c^b = cg, $$ $$d^a = df, e^b = ef, e^d = eg,
f^a = fg \rangle.
$$
(This is SmallGroup (128,854).)  In this case, $G_4 = \langle g
\rangle$ and $V_4 (G) = 1$.  Observe that $G$ has nilpotence class
$4$ and $V_4 (G) < G_4$, so $G$ has vanishing height $4$.  In fact,
$V (G) = Z_3 (G) = \langle c,d,e,f,g \rangle$.  We note that there
are no groups of order $256$ with vanishing height $5$.

For our final example, we present a group of order $3^7$ with
nilpotence class and vanishing height $4$ with $Z_3 (G) = V(G)$.  In
this case, we take
$$
G = \langle a,b,c,d,e,f,g | a^3 = d, b^3 = c, c^3 = e, d^3 = f, e^3
= g, b^a = bc, c^a = ce,$$ $$ d^b = de^2, d^c = dg^2, e^a = eg, f^b
= fg^2 \rangle.
$$
(This is SmallGroup ($3^7$, 194).)  We have $V (G) = Z_3 (G) =
\langle c,d,e,f,g \rangle$. This implies that $G$ has nilpotence
class and vanishing height $4$ as desired.
%Put an example in here.

\medskip
{\bf Acknowledgement:}  We would like to thank Mohammad Jafari for a
careful reading of this paper, for pointing out a number of typos,
and particularly for pointing out a mistake in an earlier version of
this note so that we could correct the mistake.


\begin{thebibliography}{99}
\bibitem{complete} M.~Bianchi, D.~Chillag, M.~L.~Lewis, and E.~Pacifici,
                  Character degree graphs that are complete graphs,
                  {\it Proc. AMS} {\bf 135} (2007), 671-676.
\bibitem{MAGMA}   W.~Bosma, J.~Cannon, and C.~Playoust, The Magma
                  algebra system I: The user language, {\it J. Symbolic
                  Comput.} {\bf 24} (1997), 235--265.
\bibitem{camina}  A.~R.~Camina, Some conditions which almost
                  characterize Frobenius groups, {\it Israel J.
                  Math.} {\bf 31} (1978), 153-160.
\bibitem{ChMc}    D.~Chillag and I.~D.~MacDonald, Generalized
                  Frobenius groups, {\it Israel J. Math.} {\bf 47}
                  (1984), 111-122.
%\bibitem{ChMaSc}  D.~Chillag, A.~Mann, and C.~M.~Scoppola,
%                  Generalized Frobenius groups II, {\it Israel J.
%                  Math.} {\bf 62} (1988), 269-282.
\bibitem{extreme} G.~A.~Fern\'andez-Alcober and A.~Moret\'o, Groups
                  with two extreme character degrees and their
                  normal subgroups, {\it Trans. Amer. Math. Soc.}
                  {\bf 353} (2001), 2171-2192.
\bibitem{GAP}     The GAP Group, GAP -- Groups, Algorithms, and Programming,
                  Version 4.4; 2006. (http://www.gap-system.org)
%\bibitem{HB}      B.~Huppert and N. Blackburn, ``Finite Groups II,''
%                  Springer-Verlag, Berlin, 1982.
\bibitem{text}    I.~M.~Isaacs, ``Character Theory of Finite
                  Groups,'' Academic Press, San Diego, California,
                  1976.
\bibitem{coprime} I.~M.~Isaacs, Coprime group actions fixing all
                  nonlinear irreducible characters, {\it Canad. J.
                  Math.} {\bf 41} (1989), 68--82.
\bibitem{weak}    K.~W.~Johnson, S.~Mattarei, and S.~K.~Sehgal, Weak
                  Cayley tables, {\it J. London Math. Soc. (2)} {\bf
                  61} (2000), 395-411.
%\bibitem{sylow}   E.~B.~Kuisch, Sylow $p$-subgroups of solvable
%                  Camina pairs, {\it J. Algebra} {\bf 156} (1993),
%                  395-406.
\bibitem{notes}   M.~L.~Lewis, Character tables of groups where all
                  nonlinear irreducible characters vanish off the
                  center, Submitted.
\bibitem{GCG}     M.~L.~Lewis, Generalizing Camina groups and their
                  character tables, To appear in {\it J. Group Theory}.
%\bibitem{nondiv}  M.~L.~Lewis, A.~Moret\'o, and T.~R.~Wolf,
%                  Non-divisibility among character degrees, {\it J.
%                  Group Theory} {\bf 8} (2005), 561-588.
\bibitem{MacD1}   I.~D.~MacDonald, Some $p$-groups of Frobenius and
                  extra-special type, {\it Israel J. Math.} {\bf 40}
                  (1981), 350-364.
%\bibitem{Frobtp}  A.~Mann, and C.~M.~Scoppola, On $p$-groups of
%                  Frobenius type, {\it Arch. der Math.} {\bf 56}
%                  (1991), 320-332.
\bibitem{Yong}    R.~Yongcai, Applications of the theory of Camina
                  groups, {\it Chi. Ann. of Math.} {\bf 20B} (1999),
                  39-50.
\end{thebibliography}
\end{document}